\renewcommand{\Re}{\mathop{\rm Re}\nolimits}
\renewcommand{\Im}{\mathop{\rm Im}\nolimits}
\theoremstyle{plain} \newtheorem{theorem}{Theorem}[section]
\newtheorem{lemma}[theorem]{Lemma}
\newtheorem{proposition}[theorem]{Proposition}
 \theoremstyle{definition}
\newtheorem{definition}[theorem]{Definition} \theoremstyle{remark}
\newtheorem{remark}[theorem]{Remark}
\newcommand{\R}{{\mathbb R}}
\newcommand{\Z}{{\mathbb Z}}
\newcommand{\N}{{\mathbb N}}
\def\im{{\rm i}}
\newcommand{\C}{\mathbb{C}} 
\newcommand{\T}{\mathbb{T}}
\def\({\left(}
\def\){\right)}
\def\<{\left\langle}
\def\>{\right\rangle}
\newcommand{\stz}{\mathrm{Stz}}
\numberwithin{equation}{section}
\begin{document}

\title{Stabilization of small solutions of discrete NLS with potential having two eigenvalues}

\author {Masaya Maeda}

\maketitle

\begin{abstract}
We study the long time behavior of small (in $l^2$) solutions of discrete nonlinear Schr\"odinger equations with potential.
In particular, we are interested in the case that the corresponding discrete Schr\"odinger operator has exactly two eigenvalues.
We show that under the nondegeneracy condition of Fermi Golden Rule, all small solutions decompose into a nonlinear bound state and dispersive wave.
We further show the instability of excited states and generalized equipartition property.
\end{abstract}

\section{Introduction}
In this paper, we consider the following discrete nonlinear Schr\"odinger equation (DNLS) on $\Z$:
\begin{align}\label{1}
\im \partial _ t u = H u +\beta(|u|^2 ) u,\quad u:\R\times \Z\to \C,
\end{align}
where, $H:=-\Delta + V$ and $\Delta$ is the discrete Laplacian:
\begin{align*}
(\Delta u)(n):=u(n+1)-2u(n) + u(n-1).
\end{align*}
Moreover, we set
$(Vu)(n):=V(n)u(n)$ with $\sum_{n\in\Z} (1+|n|)|V(n)|<  \infty$ (in particular $V(n)\to 0$ as $|n|\to  \infty$) and 
\begin{align}\label{betapoly}
\beta(s)=s^3+ \sum_{j=4}^M \lambda_j s^j,\quad (M\in \N,\ \lambda_j\in \R)  .
\end{align}
In the following, we always assume that $0,4$ are not resonances nor eigenvalues.

\begin{remark}
We need to assume $\beta(s)=O(s^3)$ for technical reason related to the slow decay of the linear solution.
The sign of the nonlinearity is irrelevant to our discussion because we will consider only small solutions.
\end{remark}

\begin{remark}\label{rem:essspec}
We have $\sigma(-\Delta)=\sigma_{\mathrm{ess}}(-\Delta)=[0,4]$, where $\sigma(-\Delta)$ (resp.\ $\sigma_{\mathrm{ess}}(-\Delta)$) is the set of spectrum (essential spectrum) of $-\Delta$.
Therefore, we also have $\sigma_{\mathrm{ess}}(H)=[0,4]$.
\end{remark}

The (continuous) nonlinear Schr\"odinger equations are universal model which describe wave propagation in weakly nonlinear media with dispersion.
Similarly, DNLS type equations appear in various regions in physics such as coupled optical waveguides \cite{Eisenberg02JOSAB, Peschel02JOSAB}, photonic lattice \cite{Efremidis02PRE, Sukhorukov03IEEEJQE}, Bose-Einstein condensation \cite{Cataliotti01Science} and nonlinear Su-Schrieffer-Heeger model describing topological insulator \cite{Hadad16PRB}.
We further refer \cite{FG08PR, FW98PR} for the discussion of the role of the linear potential in DNLS.

We are interested in the long time behavior of general small solutions of DNLS \eqref{1}.
By small solutions, we mean solutions of \eqref{1} with initial data $u(0,\cdot)=u_0\in l^2$ with $\|u_0\|_{l^2}^2:=\sum_{n\in \Z}|u_0(n)|^2$ sufficiently small.
Notice that by the potential $V$, the discrete Schr\"odinger operator $H=-\Delta+V$ may have eigenvalues.
In this case one can show that there exist nonlinear bound states associated to the eigenvalues of $H$. Here, a nonlinear bound state is a solution of DNLS \eqref{1} with the form $e^{-\im \omega t}\phi_\omega(n)$ (see Proposition \ref{prop:1}. Further, for other types of nonlinear bound states see \cite{BP10N}).

When $H$ has no eigenvalues, it is known that all small (in $l^2$) solutions scatter.
By scattering, we mean that there exists $\eta_+\in l^2$ s.t.\ the solution converges (in $l^2$) to the free solution $e^{\im t \Delta}\eta_+$ as $t\to \infty$.
For the case $V\equiv 0$ this was shown by Stefanov--Kevrekidis \cite{SK05N}.
For the case $V\neq 0$, it follows from the dispersive estimate of $H$ proved by Pelinovsky--Stefanov \cite{PS08JMP} (see also \cite{KKK06AA} and for lower power nonlinearity case, see \cite{MP10AA}).
However, we do not know an example s.t.\ $V\neq 0$ and $-\Delta+V$ has no eigenvalues (see section 4 and appendix of \cite{KKK06AA}).

When $H$ has one eigenvalue, it is known that all small solutions decouple into a nonlinear bound state and dispersive wave.
This means that after subtracting suitable nonlinear bound state from the solution, the remainder scatters.
Therefore, the solution $u(t)$ can be expressed as
\begin{align}\label{1evcase}
u(t)=\phi(z(t))+e^{\im t \Delta}\eta_+ + \mathrm{error}(t),\quad \|\mathrm{error}(t)\|_{l^2}\to 0,
\end{align}
where the nonlinear bound state $\phi$ is parametrized by $z\in \C$ (see Proposition \ref{prop:1}).
This was shown by Cuccagna--Tarulli \cite{CT09SIAM} and Kevrekidis--Pelinovsky--Stefanov \cite{KPS09SIAM} independently (see also \cite{MP12DCDS} for lower power nonlinearity case).
We remark that similar results also hold for the continuous nonlinear Schr\"odinger equations (NLS) on $\R^d$ when the Schr\"odinger operator has exactly one eigenvalue (see, \cite{GNT04, Miz07JMKU, Miz08JMKU, PW97JDE, SW90CMP}).
Notice that by the spectral decomposition, the long time behavior given in \eqref{1evcase} is similar to the long time behavior of the linear discrete Schr\"odinger equation $\im u_t = Hu$.
This is quite natural to expect because if the amplitude of the solution is small, then the nonlinear term will be much smaller than the linear term.

We now come to the case that $H$ has two eigenvalues.
We set 
\begin{align}\label{omegan}
\sigma_d(H)=\{e_1<e_2\}\quad
\text{and}\quad \omega_n:=e_1+n(e_2-e_1),
\end{align}
where $\sigma_d(H)$ is the set of eigenvalues (discrete spectrum) of $H$.
We further set $\phi_j$ to be the real valued normalized eigenfunctions of $H$ associated to $e_j$.
By the author  \cite{MDNLS1}, it was shown that if we assume
\begin{align}\label{2}
\omega_n\notin [0,4]=\sigma_{\mathrm{ess}}(H),\quad \forall n\in\Z,
\end{align}
then there exists a 2-parameter family of quasi-periodic solutions $\psi(z_1,z_2)= z_1\phi_1+z_2\phi_2 +o(|z|)$ and all small solutions of DNLS \eqref{1} decouple into a quasi-periodic solution and dispersive wave.
Notice that this is also similar to the behavior of linear discrete Schr\"odinger equation because general solutions can be expressed as
\begin{align}\label{lineardynamics}
u(t)=z_1 e^{\im e_1 t}\phi_1+z_2 e^{\im e_2 t}\phi_2 + e^{-\im t H}P_c u(0),
\end{align}
where $z_j\in \C$ are constants and $P_c$ is the projection to the continuous spectrum of $H$.
Further,  by linear scattering, there exists $\eta_+\in l^2$ s.t.
\begin{align*}
e^{\im t H}P_c u(0)=e^{\im t \Delta}\eta_+ + \mathrm{error}(t),\quad \|\mathrm{error}(t)\|_{l^2}\to 0\text{ as }t\to \infty.
\end{align*} 

In this paper, we assume that $\omega_n\neq 0,4$ for all $n$ and there exists $N_0\in \Z$ s.t.
\begin{align}\label{2.1}
\omega_{N_0}\in (0,4).
\end{align}
\begin{remark}\label{rem:0}
If $e_1<0<4<e_2$, we have \eqref{2}.
Therefore, without loss of generality, we can assume $e_1<e_2<0$ and $\omega_{N_0-1}<0<\omega_{N_0}<4$ for some $N_0\geq 2$.
Notice that the case $4<e_1<e_2$ can be reduced to the previous case by the so-called staggering transform $\mathcal T u(n):=(-1)^{n}u(n)$.
By this transformation, the nonlinear term will change its sign but since we are only considering small solutions, it will make no change in the argument.
\end{remark}

We show that under the assumption \eqref{2.1} and the Fermi Golden Rule assumption (which we will explain below), all small (in $l^2$) solutions decouple into a nonlinear bound state and dispersive wave (Theorem \ref{thm:1}).
Thus, the solution $u(t)$ can be expressed as
\begin{align}\label{2evcase}
u(t)=\phi_j(z(t))+e^{\im t \Delta}\eta_+ + \mathrm{error}(t),\quad \|\mathrm{error}(t)\|_{l^2}\to 0,
\end{align}
where $\phi_j(z)=z\phi_j+o(|z|)$ is the nonlinear bound state (given in Proposition \ref{prop:1}) and $j$ will be $1$ or $2$ depending on the solution.
At first glance, one may think the result is similar to the one eigenvalue case because \eqref{1evcase} and \eqref{2evcase} looks similar.
However, comparing \eqref{2evcase} with the dynamics of linear discrete Schr\"odinger equation, there is a large difference because the solution of linear equation satisfies \eqref{lineardynamics}.
Notice that in \eqref{lineardynamics}, the solution has two bound states but in \eqref{2evcase}, the solution has only one bound state.
As a result, we see that there exists no quasi-periodic solution.
Therefore, combined with \cite{MDNLS1}, we see that the long time behavior of small solutions (in particular the existence of quasi-periodic solutions) heavily depends on the position of eigenvalues which generically satisfies \eqref{2} or \eqref{2.1}.

We now explain the role of two eigenvalues and the meaning of $\omega_n$.
For simplicity of explanation, we set the nonlinearity to be $|u|^2u$.
First, notice that by the gauge invariance of the nonlinearity, if we substitute $u=e^{-\im e_j t}\phi_j$ in $|u|^2u$, we get $e^{-\im e_j t}\phi_\omega^3$.
So, the nonlinearity do not change a single frequency.
However, if we substitute $u=e^{-\im e_1 t}\phi_1 + e^{-\im e_2 t}\phi_2$ in $|u|^2u$, we have
\begin{align*}
|u(t)|^2u(t)=e^{-\im e_1 t}(\phi_1^3+\phi_1\phi_2^2)+e^{-\im e_2 t}(\phi_2^3+\phi_1^2\phi_2)+e^{-\im (2 e_1-e_2)t}\phi_1^2\phi_2+e^{-\im(2 e_2-e_1)t}\phi_2^2\phi_1.
\end{align*}
Therefore, we see that new frequencies $\omega_{-1}=2e_1-e_2$ and $\omega_2=2e_2-e_1$ appear (note that $\omega_0=e_1$ and $\omega_1=e_2$, see \eqref{omegan}).
Similarly, the new frequencies will create more frequencies, and we will have that all frequencies $\omega_n$ $n\in \Z$ will be created by the nonlinearity.
We now see that the conditions \eqref{2} and \eqref{2.1} are about the resonance between these frequencies with the continuous spectrum of $H$ (recall Remark \ref{rem:essspec}).
In \cite{MDNLS1}, we have shown that if there is no resonance (which is the case of \eqref{2}), then there exists a family of quasi-periodic solutions (or in other words, the solution behaves similar to linear equation), and if there is a resonance, we will show in this paper, there exists no quasi-periodic solution (or the solution behaves differently compared to linear equation).
We refer \cite{FG08PR, FW98PR} for related discussion.

Recall that the essential spectrum of the continuous Schr\"odinger operator $H_c=-\sum_{j=1}^d \partial_{x_j}^2 + V$ is $[0,\infty)$.
Thus, the assumption \eqref{2} with $[0,4]$ replaced by $[0,\infty)$ can never be satisfied.
Therefore, one can expect that for the continuous NLS, all small solutions decouple into a nonlinear bound state and dispersive wave (and in particular no small quasi-periodic solution exists).
Indeed, for NLS on $\R^3$ this was shown by Soffer--Weinstein \cite{SW04RMP} and Tsai--Yau \cite{TY02ATMP} for the two eigenvalue cases with $N_0=2$ and Cuccagna--Maeda \cite{CuMaAPDE} for the general cases.
Therefore, our result in this paper is similar to the continuous NLS (for related results for nonlinear Klein-Gordon and Dirac equations, see \cite{CMP16NA} and \cite{CPS17AIHPAN, CT16JMAA,PS12JMP}).
For experimental realization, see \cite{MLS05PRL}.

When $H$ has more than $3$ eigenvalues, the situation becomes complicated.
This is because if a pair of eigenvalues $\{e_{m_1},e_{m_2}\}$ satisfies \eqref{2}, then one can construct a family of quasi-periodic solutions associated to the eigenfunctions of $\{e_{m_1}, e_{m_2}\}$.
On the other hand, if $\{e_{m_1},e_{m_2}\}$ satisfies \eqref{2.1}, then from our result, it is natural to think there will be no such quasi-periodic solution.
Further, we conjecture there will be no quasi-periodic solution with three modes because $\{\omega_{n,m}\}_{n,m\in\Z}$ is generically dense in $\R$, where $\omega_{n,m}=e_1+n(e_2-e_1)+m(e_3-e_1)$.
However, this will be a future work.

We introduce some notations to state our result precisely.
\begin{itemize}
\item
We often write $a\lesssim b$ by meaning that there exists a constant $C$ s.t.\  $a\leq Cb$.
If we have $a\lesssim b$ and $b\lesssim a$, we write $a\sim b$.
\item
For $p\geq 1$, $\sigma\in\R$, we set
$l^{p,\sigma}(\Z):=\left\{ u=\{u(n)\}_{n\in\Z}\ |\ \|u\|_{l^{p,\sigma}}^p:=\sum_{n\in\Z}\<n\>^{p\sigma}|u(n)|^p<\infty\right\},$ where $\<n\>:=(1+n^2)^{1/2}$.
Further, $l^p(\Z):=l^{p,0}(\Z)$ and 
we define the (real) inner-product of $l^2(\Z)$ by
$
\<u,v\>:=\Re\sum_{n\in \Z}u(n)\overline{v(n)}.
$
\item
For $a\in \R$, we set
$l^a_e(\Z):=\{u=\{u(n)\}_{n\in\Z}\ |\ \|u\|_{l^a_e}^2:=\sum_{n\in\Z} e^{2a|n|}|u(n)|^2<\infty\}.
$

\item
For a Banach space $X$ equipped with the norm $\|\cdot\|_X$,
we set
$
B_X(\delta):=\{u\in X\ |\ \|u\|_{X}<\delta\}.
$
\item
For Banach spaces $X,Y$, we set $\mathcal L(X;Y)$ to be the Banach space of all bounded operators from $X$ to $Y$, and $\mathcal L(X):=\mathcal L(X;X)$.
Further, we set $\mathcal L^n(X;Y)$ inductively by $\mathcal L^n(X;Y):=\mathcal L(X;\mathcal L^{n-1}(X;Y))$ and $\mathcal L^0(X;Y):=Y$.
\item
We set $C^\omega(B_X(\delta);Y)$ to be all real analytic functions from $B_X(\delta)$ to $Y$.
By real analytic functions, we mean that $f:B_X(\delta)\to Y$ can be written as $f(x)=\sum_{n\geq 0}a_n x^n$ with $\sum_{n\geq 0}\|a_n\|_{\mathcal L^n(X;Y)}r^n<\infty$ for all $r<\delta$, where $a_n\in \mathcal L^n(X;Y)$ and $a_nx^n:=a_n(x, x, \cdots, x)$.
\item For $\omega\in (0,4)$, we define $R_+(\omega)$ by $\lim_{\delta\downarrow 0}(H-\omega-\im \delta)^{-1}$, where the limit is taken in the space $\mathcal L(l^{2,\sigma}(\Z),l^{2,-\sigma}(\Z))$ for $\sigma>1$.
See, Lemma 3.2 of \cite{CT09SIAM} for the existence of such limit.
\item
For $j=1,2$, we define
$\phi_{j,R}:=\phi_j$ and $\phi_{j,I}:=\im \phi_j.$
\item
We set $z_{j,R}:=\Re z_j$, $z_{j,I}:=\Im z_j$ and $D_{j,A}=\partial_{z_{j,A}}$ for $j=1,2$ and $A=R,I$.
\item
We set $P_cu:=u-\sum_{j=1,2A=R,I}\<u,\phi_{j,A}\>\phi_{j,A}.$

\end{itemize}

It is well known that there exist families of  small nonlinear bound states of \eqref{1} which bifurcate from $\phi_j$.
For the proof, see \cite{MDNLS1}.

\begin{proposition}\label{prop:1}

Fix $j\in \{1,2\}$.
There exist $a_0>0$ and $\delta_0>0$ s.t.\ for all $z\in B_{\C}(\delta_0)$, there exists $\tilde e_j\in C^\omega\(B_{\R}(\delta_0^2); \R\)$ and $q_j\in C^\omega\(B_{\R}(\delta_0^2); l^{a_0}_e(\Z;\R)\)$ s.t.\ $\<\phi_j, q_j\>=0$ and 
\begin{align}\label{3}
\phi_j(z):=z\tilde\phi_j(|z|^2)=z\(\phi_j+q_j(|z|^2)\),
\end{align} satisfies
\begin{align}\label{4}
\(H-E_j(|z|^2)\)\phi_j(z)+\beta(|\phi_j(z)|^2)\phi_j(z)=0,
\end{align}
where $E_j(|z|^2)=e_j+\tilde e_j(|z|^2)$.
Further, we have $|\tilde e_j(|z|^2)|+\|q_j(|z|^2)\|_{l_e^{a_0}}\lesssim |z|^6$.
\end{proposition}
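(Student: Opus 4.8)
The plan is to construct $\phi_j(z)$ by a Lyapunov--Schmidt reduction combined with an implicit function theorem argument in the space of exponentially weighted sequences $l^{a_0}_e(\Z)$. Because of the gauge invariance $u\mapsto e^{\im\theta}u$ of \eqref{1}, it is natural to look for a solution of the stationary equation \eqref{4} of the form $\phi_j(z)=z\tilde\phi_j(|z|^2)$ with $\tilde\phi_j(0)=\phi_j$; writing $\tilde\phi_j(s)=\phi_j+q(s)$ with $\<\phi_j,q\>=0$ reduces \eqref{4}, after dividing by $z$, to the real-parameter system
\begin{align}\label{pp:1}
\(H-e_j-\tilde e\)q(s)-\tilde e\,\phi_j + \beta\!\(s\,|\tilde\phi_j(s)|^2\)\(\phi_j+q(s)\)=0,\qquad s=|z|^2\in[0,\delta_0^2),
\end{align}
together with the scalar orthogonality condition $\<\phi_j,q\>=0$ determining $\tilde e=\tilde e(s)$. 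First I would record that $H-e_j$ is invertible on the orthogonal complement $\{\phi_j\}^\perp$ (using that $e_j$ is a simple eigenvalue of $H$, which follows since $H$ is a one-dimensional discrete Schr\"odinger operator) and, crucially, that its inverse maps $l^{a_0}_e(\Z)$ into itself for $a_0$ small, because the resolvent of a discrete Schr\"odinger operator with exponentially localized (here, compactly-decaying, even finitely supported-like via $\sum(1+|n|)|V(n)|<\infty$... more precisely sufficiently decaying) potential preserves exponential decay away from the spectrum — the eigenfunctions $\phi_j$ themselves decay exponentially at the known rate determined by $\mathrm{dist}(e_j,[0,4])$, so one fixes $a_0$ strictly below that rate.

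The key steps, in order, are: (i) fix $a_0>0$ smaller than the exponential decay rate of $\phi_j$ and show $\phi_j\in l^{a_0}_e(\Z)$ and $(H-e_j)^{-1}\big|_{\{\phi_j\}^\perp}\in\mathcal L(l^{a_0}_e(\Z))$; (ii) rewrite \eqref{pp:1} as a fixed point for the pair $(q,\tilde e)$ in $\{\phi_j\}^\perp_{l^{a_0}_e}\times\R$ by projecting with $P_c^{(j)}:=1-\<\cdot,\phi_{j}\>\phi_j$ and its complement — the $P_c^{(j)}$ component gives $q=-(H-e_j-\tilde e)^{-1}\big|_{\{\phi_j\}^\perp}P_c^{(j)}\big[\beta(\cdots)(\phi_j+q)\big]$ (for $|\tilde e|$ small the perturbed inverse still exists and is bounded on $l^{a_0}_e$ by a Neumann series), and the scalar component gives $\tilde e=\<\phi_j,\beta(\cdots)(\phi_j+q)\>$; (iii) observe that since $\beta(s)=O(s^3)$, the map $s\mapsto\beta(s\,|\tilde\phi_j(s)|^2)$ vanishes to third order in $s$, hence the right-hand sides are $O(s^3)$, and the contraction mapping / implicit function theorem in the Banach algebra $l^{a_0}_e(\Z)$ (which is closed under pointwise products, so $|\tilde\phi_j|^2\tilde\phi_j$ maps $l^{a_0}_e$ to itself) produces a unique small solution $(q(s),\tilde e(s))$ for $s\in B_\R(\delta_0^2)$; (iv) upgrade regularity to $C^\omega$ in $s$: since all the constituent maps ($H$, the resolvent depending analytically on $\tilde e$, the polynomial $\beta$, pointwise multiplication) are real-analytic on the relevant balls, the analytic implicit function theorem in Banach spaces applies; (v) read off from the fixed-point bounds that $\|q(s)\|_{l^{a_0}_e}+|\tilde e(s)|\lesssim |s|^3=|z|^6$, which is precisely the claimed estimate, and verify \eqref{4} holds in $l^2$ by multiplying \eqref{pp:1} back by $z$.

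The main obstacle I anticipate is step (i)–(ii) done \emph{simultaneously in the weighted space} $l^{a_0}_e(\Z)$: one must check that $(H-e_j-\tilde e)^{-1}$ restricted to $\{\phi_j\}^\perp$ is not merely bounded on $l^2$ but maps the exponentially weighted space to itself with a uniform bound for $\tilde e$ near $0$. This is where the hypothesis $\sum_{n}(1+|n|)|V(n)|<\infty$ and the absence of resonances/eigenvalues at the band edges $0,4$ enter — although for the \emph{interior} eigenvalue $e_j$ what really matters is the spectral gap $\mathrm{dist}(e_j,\sigma_{\mathrm{ess}}(H))=\mathrm{dist}(e_j,[0,4])>0$, which controls the exponential decay of the resolvent kernel via a Combes–Thomas-type estimate; one picks $a_0>0$ below the resulting rate. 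Everything else (the Banach-algebra structure of $l^{a_0}_e$, the cubic vanishing from $\beta(s)=O(s^3)$, analyticity) is then routine bookkeeping, and since the full details are carried out in \cite{MDNLS1}, here it suffices to indicate this structure and refer there.
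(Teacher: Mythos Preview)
Your proposal is correct and in fact supplies more than the paper does: the paper gives no proof of this proposition at all, referring instead to \cite{MDNLS1} with the single sentence ``For the proof, see \cite{MDNLS1}.'' The Lyapunov--Schmidt reduction you sketch (project onto $\phi_j$ and $\{\phi_j\}^\perp$, invert $H-e_j$ on the complement in $l^{a_0}_e$ via a Combes--Thomas bound, close by the analytic implicit function theorem using that $l^{a_0}_e$ is a Banach algebra and $\beta(s)=O(s^3)$) is exactly the standard argument and is presumably what \cite{MDNLS1} contains, so your approach and the paper's (outsourced) approach coincide.
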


Using the nonlinear bound states, we can express arbitrary $u\in l^2$ with $\|u\|_{l^2}\ll1$ such as
\begin{align*}
u=\phi_1(z_1)+\phi_2(z_2)+ R[z]\eta,
\end{align*}
where $z_1,z_2\in \C$, $\eta\in P_c l^2$ and $R[z]=R[z_1,z_2]$ is some near identity operator (see Lemma \ref{lem:2} and Lemma \ref{lem:3}).
Thus, the study of the dynamics of $u$ will reduce to the study of the system of ODE and PDE which governs $z_1,z_2$ and $\eta$.

By a normal form argument, we can simplify the ODE-PDE system as follows.
\begin{proposition}\label{normalform}
There exists a transformation in the neighborhood of the origin of $\C^2\times P_cl^2$ such that the new coordinate $(\tilde z_1, \tilde z_1, \tilde \eta)$ and the original coordinate $(z_1,z_2,\eta)$ is near in the following sense:
\begin{align}\label{nearid}
|z-\tilde z| + \|\eta - \tilde \eta\|_{l_e^{a_{N_0}}}\lesssim |z|^5 \(|z_1z_2| + \|\eta\|_{l_e^{-b_{N_0}}}\),
\end{align}
where $b_{N_0}>0$ is a constant.
Moreover, the new coordinate (which will just write $(z_1,z_2,\eta)$) satisfies the following system: 
\begin{align}
\im \dot z_1 &= e_1 z_1 + A_1(|z_1|^2,|z_2|^2)z_1+(N_0-1)\bar z_1^{N_0-2}z_2^{N_0}(G, \eta)+\mathcal R_1,\label{intro41}\\
\im \dot z_2 &= e_2 z_2 + A_2(|z_1|^2,|z_2|^2  )z_2+N_0z_1^{N_0-1}\bar z_2^{N_0-1}(\bar G , \bar \eta)+\mathcal R_2,\label{intro42}\\
\im \eta_t &= H\eta + P_c \beta(|\eta|^2  )\eta+ \bar z_1^{N_0-1}z_2^{N_0}G  +  \mathcal R_{\eta},\label{intro43}
\end{align}
where $\mathcal R_1,\mathcal R_2,\mathcal R_\eta$ are higher order error terms, $A_1,A_2$ are $\R$-valued functions and $G\in l_e^{b_{N_0}}$.
\end{proposition}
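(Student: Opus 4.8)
The plan is to derive the system \eqref{intro41}--\eqref{intro43} by substituting the decomposition $u=\phi_1(z_1)+\phi_2(z_2)+R[z]\eta$ into \eqref{1}, extracting the modulation equations for $z_1,z_2,\eta$, and then iterating a sequence of near-identity (Poincar\'e--Birkhoff) normal form transformations that successively remove all non-resonant monomials of low order. First I would insert the ansatz into DNLS and use \eqref{4} together with the orthogonality relations $\<\phi_j,q_j\>=0$ and $P_c$-orthogonality to split the equation into its components along $\phi_{1,A}$, $\phi_{2,A}$ ($A=R,I$) and along $P_cl^2$; differentiating the bound-state relation \eqref{3} in $z_j$ produces the leading $e_jz_j$ terms and, after collecting the cubic and higher contributions of $\beta(|u|^2)u$, one obtains a raw ODE-PDE system whose nonlinearity is a convergent power series in $(z,\bar z,\eta,\bar\eta)$ with coefficients in exponentially weighted spaces (this uses the exponential localization of $\phi_j$ and $q_j$ from Proposition \ref{prop:1} and the mapping properties of $R[z]$).

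Next I would carry out the normal form reduction. At each order one identifies the \emph{resonant} monomials — those of the form $|z_1|^{2a}|z_2|^{2b}z_j$ in the $z_j$-equations (gauge-covariant, hence unremovable, feeding into the real functions $A_1,A_2$) and, crucially, the monomial $\bar z_1^{N_0-1}z_2^{N_0}$ in the $\eta$-equation, whose frequency is exactly $\omega_{N_0}\in(0,4)$ by \eqref{2.1} and therefore resonates with the continuous spectrum; all other monomials are non-resonant and can be killed by solving the appropriate homological equation. For the ODE part the homological equation is an algebraic division by a nonzero frequency denominator $\omega_n-\omega_m\ne0$; for the PDE part one divides by $H-\omega_n$, which is invertible on $P_cl^2$ precisely when $\omega_n\notin[0,4]$, and here one must peel off all such $n$ until only the genuinely resonant frequency $\omega_{N_0}$ remains, leaving the source term $\bar z_1^{N_0-1}z_2^{N_0}G$ with $G$ obtained by applying resolvents/projections to products of eigenfunctions — hence $G\in l_e^{b_{N_0}}$ for a suitable $b_{N_0}>0$ but with a possibly smaller exponential rate than the bound states, which is why two distinct exponents $a_{N_0}>0$ and $b_{N_0}>0$ appear in \eqref{nearid}. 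Tracking how the localization exponent degrades at each step (loss from inverting $H-\omega_n$, which only maps $l^{2,\sigma}\to l^{2,-\sigma}$ near the edges, versus gain from the bound states) is exactly what produces the asymmetric weights in \eqref{nearid}; the coupling terms $(N_0-1)\bar z_1^{N_0-2}z_2^{N_0}(G,\eta)$ and $N_0z_1^{N_0-1}\bar z_2^{N_0-1}(\bar G,\bar\eta)$ in the ODEs are the reciprocal feedback of this resonant $\eta$-source and must appear because the transformation that removes $\bar z_1^{N_0-1}z_2^{N_0}\times(\text{non-resonant }\eta\text{ pieces})$ from the $\eta$-equation unavoidably generates them. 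Everything of higher order (and the non-resonant remainders at the critical order) is swept into $\mathcal R_1,\mathcal R_2,\mathcal R_\eta$, for which one records the structural estimates needed later.

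The main obstacle is the bookkeeping of the finite but order-$N_0$-deep iteration: one must verify that the normal form transformations compose to a genuine near-identity change of coordinates on a fixed neighborhood of the origin in $\C^2\times P_cl^2$ (with the estimate \eqref{nearid}), that no unexpected resonance among the $\omega_n$ with each other is created (guaranteed by $\omega_n\ne\omega_m$ for $n\ne m$ and $\omega_n\ne0,4$, which follows from $e_1<e_2$ and the hypotheses), and — the delicate point — that the exponential-weight losses incurred by repeatedly inverting $H-\omega_n$ near the spectral edges are summable and leave a strictly positive final rate $b_{N_0}$ while the error terms still live in a space ($l_e^{a_{N_0}}$) strong enough for the subsequent Fermi Golden Rule / virial analysis. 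Controlling this trade-off between localization and non-resonance — rather than any single computation — is what the proof really hinges on; the rest is a careful but routine expansion of $\beta(|u|^2)u$ and collection of terms.
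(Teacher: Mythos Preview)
Your outline is a plausible route, but it differs from the paper's in a structural way worth flagging. The paper does \emph{not} perform normal forms directly on the ODE--PDE system. Instead it works entirely at the level of the energy functional: first a Darboux step (Proposition~\ref{prop:dar}) straightens the pulled-back symplectic form $\Omega$ into the diagonal form $\Omega_0$, and only then a Birkhoff normal form (Proposition~\ref{prop:birk}) --- implemented as time-one maps of auxiliary Hamiltonian flows --- removes the nonresonant monomials from the \emph{energy expansion}. Equations \eqref{intro41}--\eqref{intro43} then drop out by reading off $(X_{E^{2N_0}})_{z_j}$ and $(X_{E^{2N_0}})_\eta$ from \eqref{25}--\eqref{26}. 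The payoff of this Hamiltonian route is that the coupling terms in \eqref{intro41}--\eqref{intro42} and the source in \eqref{intro43} all come from differentiating the \emph{same} scalar term $\langle \bar z_1^{N_0-1} z_2^{N_0} G,\eta\rangle$; the coefficients $(N_0-1)$ and $N_0$ are then automatic, and this is precisely what later gives the almost-conservation of $N_0|z_1|^2+(N_0-1)|z_2|^2$ underlying Theorem~\ref{thm:3}. In your equation-level scheme these coefficients would have to be checked by hand, and you omit the Darboux step altogether --- which is fine if you never invoke the symplectic structure, but you should be aware that the raw $(z_1,z_2,\eta)$ coordinates from Lemma~\ref{lem:3} are not canonical.

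One technical point is off. You locate the delicate loss of exponential localization in ``inverting $H-\omega_n$ near the spectral edges'', but by hypothesis $\omega_n\neq 0,4$ for every $n$, and for the nonresonant $n$ one has $\omega_n\notin[0,4]$, so $(H-\omega_n)^{-1}$ is a genuine resolvent acting boundedly on $l^2$; the degradation of the exponential rate comes simply from the fact that this resolvent maps $l_e^a$ into $l_e^{a'}$ only for $a'$ bounded by the distance from $\omega_n$ to $\sigma(H)$, not from any edge singularity. This is why the paper tracks a decreasing sequence $a_M$ through the iteration rather than invoking any limiting-absorption estimate at this stage.
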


\begin{remark}
The near identity transformation in Proposition \ref{normalform} is given by the composition of transformation given in Propositions \ref{prop:dar} and \ref{prop:birk}.
Here, $b_{N_0}$ given in \eqref{nearid} will be $a_{2N_0}$ in Proposition \ref{prop:birk} because we use Proposition \ref{prop:birk} with $M=2N_0$. 
\end{remark}

\begin{remark}
The estimate \eqref{nearid} ensures us that if $|z_1z_2|\to 0$ and $\|\eta\|_{L_e^{a_{N_0}}}\to 0$ (which we will show in our main theorem), then the original coordinate and the new coordinate corresponds.
Therefore, we can work on the new coordinate only to get our result.
\end{remark}

\begin{remark}
$G$ in Proposition \ref{normalform} will corresopond to $G^{2N_0}_{N_0-1,2,0}(0)$ in Proposition \ref{prop:birk}.
\end{remark}

For our result, we need a nondegeneracy condition related to $G$ which appears in the system \eqref{intro41}--\eqref{intro42}.
We will assume the following Fermi Golden Rule assumption
\begin{align}\label{FGR}
\Gamma:=\Im (R_H^+(\omega_{N_0})G,G)>0.\tag{FGR}
\end{align}
We note that $\Gamma\geq 0$ in general.
So, the assumption is that $\Gamma\neq 0$.

Our main result is the following.
\begin{theorem}\label{thm:1}
Assume \eqref{2} and $(\mathrm{FGR})$.
Then, there exists $\delta>0$ s.t.\ if $\|u(0)\|_{l^2}<\delta$, there exists $j\in \{1,2\}$, $z\in C^1(\R;\C)$, $\rho_+>0$ and $\eta_+\in l^2$ s.t.
\begin{align}
&\lim_{t\to\infty}\|u(t)-\phi_j(z(t))-e^{\im t \Delta}\eta_+\|_{l^2}=0,\label{5}\\
&\lim_{t\to \infty}|z(t)|\to \rho_+,\label{6}
\end{align}
and $\|\eta_+\|_{l^2}+\rho_+\lesssim \|u(0)\|_{l^2}$, where $u(t)$ is the solution of \eqref{1} with $\lambda=\lambda_0$.
\end{theorem}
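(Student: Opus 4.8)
The plan is to run a continuity (bootstrap) argument on the normal-form system \eqref{intro41}--\eqref{intro43} of Proposition \ref{normalform}, the closing mechanism being the Fermi Golden Rule \eqref{FGR}. Write $\varepsilon:=\|u(0)\|_{l^2}$ and $g(t):=\bar z_1(t)^{N_0-1}z_2(t)^{N_0}$, so that $g$ is the amplitude of the resonant forcing term $gG$ in \eqref{intro43}, and let $[0,T)$ be the maximal interval on which the solution stays below a fixed small threshold. On $[0,T)$ I would propagate: (i) $\sup_{[0,T)}|z(t)|\lesssim\varepsilon$, from conservation of $\|u(t)\|_{l^2}^2$ for \eqref{1} together with the decomposition $u=\phi_1(z_1)+\phi_2(z_2)+R[z]\eta$ of Lemmas \ref{lem:2}--\ref{lem:3}, Proposition \ref{prop:1}, and \eqref{nearid}; (ii) Strichartz- and Kato-smoothing-type bounds for $\eta$ on $[0,T)$ built from the dispersive estimate $\|e^{-\im tH}P_c\|_{l^1\to l^\infty}\lesssim\langle t\rangle^{-1/3}$ of Pelinovsky--Stefanov and the associated limiting absorption principle (the slow $\langle t\rangle^{-1/3}$ rate being exactly why $\beta=O(s^3)$ is imposed); (iii) $\|g\|_{L^2(0,T)}\lesssim\varepsilon$ uniformly in $T$, which is the bound that \eqref{FGR} delivers.

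The analytic heart is the extraction of the resonant component of $\eta$. From \eqref{intro41}--\eqref{intro42} one computes $\im\dot g=\omega_{N_0}g+O(|z|^2|g|)+O\big((\text{powers of }z)\,\|\eta\|\big)$, so $g$ is essentially monochromatic at the frequency $\omega_{N_0}$, which by \eqref{2.1} lies in $(0,4)\subset\sigma_{\mathrm{ess}}(H)$ and is, by hypothesis, a regular point of the spectrum. Writing the Duhamel formula for \eqref{intro43} and integrating by parts in $s$ in $-\im\int_0^t e^{-\im(t-s)H}g(s)G\,ds$ produces a resonant profile $\eta_{\mathrm{res}}(t):=c_0\,g(t)\,R_H^+(\omega_{N_0})G$ — meaningful because $R_H^+(\omega_{N_0})\in\mathcal L(l^{2,\sigma},l^{2,-\sigma})$ for $\sigma>1$ — plus a remainder controlled in weighted and Strichartz norms by $\|g\|_{L^2}$ times a small constant, by the dispersive tail of $e^{-\im tH}\eta(0)$, and by strictly higher-order contributions from $\mathcal R_\eta$ and $P_c\beta(|\eta|^2)\eta$. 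In particular one obtains the scalar expansion
\begin{align*}
(G,\eta)(t)=c_0\,\overline{g(t)}\,\big(R_H^+(\omega_{N_0})G,G\big)+(\text{remainder}),
\end{align*}
where $c_0>0$ is an explicit constant, its positivity being the standard Fermi-Golden-Rule sign computation (a mode forced at a frequency inside the continuous spectrum radiates).

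Next come the energy identities: from \eqref{intro41}--\eqref{intro42}, since $e_j|z_j|^2$ and $A_j|z_j|^2$ are real,
\begin{align*}
\partial_t|z_1|^2 &= 2(N_0-1)\,\Im\!\big(g\,(G,\eta)\big)+2\,\Im\!\big(\bar z_1\mathcal R_1\big),\\
\partial_t|z_2|^2 &= -2N_0\,\Im\!\big(g\,(G,\eta)\big)+2\,\Im\!\big(\bar z_2\mathcal R_2\big),
\end{align*}
so $Q(t):=\frac{|z_1(t)|^2}{N_0-1}+\frac{|z_2(t)|^2}{N_0}$ has a purely higher-order time derivative, while substituting the expansion of $(G,\eta)$ yields $\Im\!\big(g\,(G,\eta)\big)=c_0\,\Gamma\,|g|^2+(\text{remainder})$ with $\Gamma>0$ by \eqref{FGR}. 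Hence $\partial_t|z_2|^2\le-2N_0c_0\Gamma|g|^2+|\text{remainder}|$; integrating over $[0,T)$ and using $|z_2|^2\ge0$ bounds $\int_0^T|g|^2\,dt$ by $|z_2(0)|^2$ plus the time integral of the remainder, which is a small multiple of $\|g\|_{L^2(0,T)}^2$ (small once $\varepsilon$ is) together with terms of type $\varepsilon\|g\|_{L^2(0,T)}$ and strictly higher-order ones, so Young's inequality and smallness of $\varepsilon$ give $\int_0^T|g|^2\,dt\lesssim\varepsilon^2$. This closes (iii), hence $T=\infty$, and also yields global Strichartz bounds for $\eta$ and $\int_0^\infty|g|^2\,dt<\infty$.

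Finally one reads off the asymptotics. Since $\partial_t|g|^2$ is bounded, $|g|^2$ is uniformly continuous, so the $L^1$ bound forces $|g(t)|\to0$; since the time derivatives above are integrable, $Q(t)\to Q_+$ and $|z_2(t)|^2\to\rho_{2,+}^2$, whence $|z_1(t)|^2\to\rho_{1,+}^2$, and $\rho_{1,+}^{\,N_0-1}\rho_{2,+}^{\,N_0}=\lim_{t\to\infty}|g(t)|=0$ forces one of these limits to vanish; choosing $j\in\{1,2\}$ so that $|z_j(t)|\to\rho_+$ and $|z_{3-j}(t)|\to0$ gives \eqref{6}. For \eqref{5}: in \eqref{intro43} the source $gG$ lies in $L^2_t l^{2,\sigma}_x$ and the remaining inhomogeneity is higher order, so the Kato smoothing estimate shows $e^{\im tH}\eta(t)$ is Cauchy in $l^2$ as $t\to\infty$, hence $\eta(t)=e^{-\im tH}P_c\eta^H_++o_{l^2}(1)$, and the linear scattering statement recalled in the Introduction gives $\|\eta(t)-e^{\im t\Delta}\eta_+\|_{l^2}\to0$. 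Transferring back through the normal-form change of variables, whose defect \eqref{nearid} tends to $0$ because $|z_1z_2|\le|z_1||z_2|\to0$ and $\|e^{\im t\Delta}\eta_+\|_{l_e^{-b_{N_0}}}\to0$ by dispersion, and then through $u=\phi_1(z_1)+\phi_2(z_2)+R[z]\eta$ using $\|\phi_{3-j}(z_{3-j})\|_{l^2}\to0$ (Proposition \ref{prop:1}, as $|z_{3-j}|\to0$) and $\|(R[z]-\mathrm{Id})\eta\|_{l^2}\to0$, yields \eqref{5}; the bound $\|\eta_+\|_{l^2}+\rho_+\lesssim\varepsilon$ then follows from (i) and conservation of $\|u(t)\|_{l^2}^2$. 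I expect the main obstacle to be the construction in the second paragraph together with the remainder bookkeeping in the third: one must integrate by parts against a $g$ that is only approximately monochromatic — it carries the nonlinear frequency shifts $A_j$ and $\eta$-dependent corrections — arrange that every remainder is either strictly higher order or absorbable into $\|g\|_{L^2}^2+\varepsilon^2$, and carry all of this out within the narrow functional framework forced by the $\langle t\rangle^{-1/3}$ discrete dispersive decay.
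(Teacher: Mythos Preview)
Your proposal is correct and follows essentially the same route as the paper: the paper defines the resonant profile $Y:=-\bar z_1^{N_0-1}z_2^{N_0}R_H^+(\omega_{N_0})G$ (your $\eta_{\mathrm{res}}$), writes the equation for the remainder $\eta-Y$, and closes the bootstrap exactly via the $|z_2|^2$ identity and \eqref{FGR} as you outline. The one ingredient you leave implicit but which the paper isolates as a separate lemma is the weighted decay $\|e^{-\im tH}R_H^+(\omega_{N_0})P_cf\|_{l^{2,-\sigma}}\lesssim\langle t\rangle^{-3/2}\|f\|_{l^{2,\sigma}}$ for $\sigma>7/2$, needed to control in $L^2_tl^{2,-\sigma}$ both the boundary term $e^{-\im tH}Y(0)$ and the corrections arising from your $g$ being only approximately monochromatic.
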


\begin{remark}
The equation \eqref{5} in the statement of Theorem \ref{thm:1} shows that the solution $u(t)$ can be expressed as $u(t)=\phi_j(z(t))+e^{\im t \Delta}\eta_+ + \mathrm{error}(t)$, where $\|\mathrm{error}(t)\|_{l^2}\to 0$ as $t\to \infty$.
Moreover, since $e^{\im t \Delta}\eta_+ $ vanishes in any compact domain as $t\to \infty$, the solution locally (in space) converges to $\phi_j(z(t))$.
\end{remark}

\begin{remark}
We note that, $j$($\in \{1,2\}$) in Theorem \ref{thm:1} depends on the initial data $u(0)$.
Therefore, even if one may get the impression that both $\phi_1$ and $\phi_2$ are stable, it is not the case.
Indeed, we will show that $\phi_2$ is unstable (Theorem \ref{thm:2}).
Therefore, we expect that for generic initial data (where we do not have the precise definition of "generic"), the solutions converge to $\phi_1$ and only for some exceptional initial data, the solutions converge to $\phi_2$.
\end{remark}

\begin{remark}
For given $G$, $\Gamma$ can be expressed as
\begin{align}\label{FGRexpression}
\Gamma=\frac{\pi}{4\sin (\xi_{N_0})} \(|\hat G(\xi_{N_0})|^2+|\hat G(- \xi_{N_0})|^2\),
\end{align}
where $\xi_{N_0}=\arccos (\frac{1}{2}(2-\omega_{N_0}))$ and $\hat G$ is the distroted Fourier transform of $G$ associated to $H$ (see \cite{Cuccagna09JMAA}).
We will give the proof of this formula in the appendix of this paper.
Now, the assumption \eqref{FGR} reduces to the condition
\begin{align*}
\hat G(\xi_{N_0})\neq 0\text{ or }\hat G(-\xi_{N_0})\neq 0.
\end{align*}
\end{remark}
\begin{remark}
For $N_0=4$, which will be the simplest case in our situation, $G$ will given by
\begin{align*}
G=\phi_1^3\phi_1^4.
\end{align*}
Clearly seen by the above expression, $\Gamma$ is related to the overlap of the two eigenvalues of $H$.
\end{remark}
\begin{remark}\label{rem:N023}
Unfortunately, for the cases $N_0=2,3$, $G$ will be $0$ (and so $\Gamma=0$) due to the fact that the nonlinearity has no cubic and quintic term.
However, one can still assume a generalized version of Fermi Golden Rule assumption such as \cite{CuMaAPDE} and obtain the same result in Theorem \ref{thm:1} as well as Theorems \ref{thm:2}, \ref{thm:3} with some modification of the proof.
In these case, we will have to take into account the higher order terms and in particular, $G$ appearing in \eqref{intro41}--\eqref{intro43} will have to be modified as
\begin{align*}
G=6|z_1|^4 \phi_1^5\phi_2^2+12 |z_1|^2|z_2|^2\phi_1^3\phi_2^4+3|z_2|^4\phi_1\phi_2^6,
\end{align*}
for the case $N_0=2$ and 
\begin{align*}
G=4|z_1|^2\phi_1^4\phi_2^3+3|z_2|^2\phi_1^2\phi_2^5,
\end{align*}
for the case $N_0=3$.
The assumption will now be
\begin{align*}
\Gamma:=\Im(R_H^+(\omega_{N_0})G,G)\geq C\times\begin{cases} |z_1|^8+|z_2|^8 & N_0=2\\ |z_1|^4+|z_2|^4 & N_0=3\end{cases},
\end{align*}
for some constant $C>0$.
\end{remark}

In this paper we also prove several results which give deeper understanding to the dynamics of small solutions of DNLS \eqref{1}.
In particular, we show
\begin{itemize}
\item
the orbital instability of excited state $\phi_2(z)$ (Theorem \ref{thm:2}),
\item
the generalization of equipartition property proved by Gang--Weinstein \cite{GW11AMRX} (Theorem \ref{thm:3}).
\end{itemize}

We say that a nonlinear bound state $\phi$ is orbitally stable if 
$$ \forall \varepsilon>0,\ \exists \delta>0\ \mathrm{s.t.}\ \text{if}\ \|u(0)-\phi\|_{l^2}< \delta,\ \text{then}\ \sup_{t>0}\inf_{\theta}\|u(t)-e^{\im \theta}\phi\|_{l^2}<\varepsilon.$$
If $\phi$ is not orbitally stable, we say $\phi$ is orbitally unstable.
We say that a nonlinear bound state $\phi$ is a ground state if $E(\phi)=\inf\{E(\psi)\ |\ \|\psi\|_{l^2}=\|\phi\|_{l^2},\ \psi\text{ is a nonlinear bound state}\}$, where $E$ is the energy of DNLS \eqref{1} given in \eqref{13.1} (there are many definitions of ground state, we adopt this definition to make the following discussion clear).
Nonlinear bound states which are not ground states will be called excited states.
In this sense, $\phi_1(z)$ are ground states and $\phi_2(z)$ are excited states for $|z|\ll1$.
It is a classical result by Rose--Weinstein \cite{RW88PD} that under our assumption all ground states $\phi_1(z)$ with $|z|\ll1$ are orbitally stable (see also \cite{FO03DIE}).
On the other hand, the orbital stability/instability of excited states are a subtle problem and there are not many rigorous results (see the discussion in \cite{Cuccagna09PhysD, CM16JNS, KPS15PRL,Mizumachi07ADE}).
In fact, one should notice that excited states of linear Schr\"odinger equation are orbitally stable (See also \cite{MM13DIE}).
Further, it was shown by the author \cite{MDNLS1} that if we have \eqref{2}, then the excited states $\phi_2(z)$ with $|z|\ll1$ are orbitally stable.
However, if we have \eqref{2.1} and assume (FGR), then excited states turn out to be orbitally unstable.
This result corresponds to Theorem 1.4 of Cuccagna--Maeda \cite{CuMaAPDE}.

\begin{theorem}\label{thm:2}
Under the assumption of Theorem \ref{thm:1}, $\phi_2(z)$ is orbitally unstable.
\end{theorem}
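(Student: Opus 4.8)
The plan is to argue by contradiction using the asymptotic description of all small solutions provided by Theorem \ref{thm:1} together with the Fermi Golden Rule mechanism that forces the $z_1$–$z_2$ energy exchange in the normal form \eqref{intro41}–\eqref{intro43}. First I would suppose, for contradiction, that $\phi_2(z_*)$ is orbitally stable for some $|z_*|\ll 1$. Take initial data of the form $u(0)=\phi_2(z_*)+v_0$ with $\|v_0\|_{l^2}$ arbitrarily small, chosen so that the $\phi_1$-component is nonzero (e.g. $v_0=\varepsilon\phi_1$); orbital stability then pins $u(t)$ in an $\varepsilon$-neighborhood of the orbit $\{e^{\im\theta}\phi_2\}$ for all $t>0$. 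Passing to the normal-form coordinates $(z_1,z_2,\eta)$ via Proposition \ref{normalform} (the transformation is near-identity by \eqref{nearid}), orbital stability forces $|z_2(t)|$ to stay $\sim|z_*|$ bounded away from $0$ and $|z_1(t)|$, $\|\eta(t)\|$ to stay uniformly small but, by the choice of $v_0$, initially nonzero.

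Next I would extract the Fermi Golden Rule dissipation. Using \eqref{intro43} one writes $\eta(t)=e^{-\im tH}\eta(0)-\im\int_0^t e^{-\im(t-s)H}\big(\bar z_1^{N_0-1}z_2^{N_0}G+\text{l.o.t.}\big)\,ds$, substitutes this into the bilinear terms $(G,\eta)$ in \eqref{intro41}–\eqref{intro42}, and computes the time derivative of the "excited-state mass" $|z_1(t)|^2$ (equivalently $\|u(t)\|_{l^2}^2$ split along the two modes, using conservation of the full $l^2$ norm). The leading resonant contribution, after the standard stationary-phase/limiting-absorption computation that produces $R_H^+(\omega_{N_0})$, yields
\begin{align*}
\frac{d}{dt}|z_1(t)|^2 = 2(N_0-1)\,\Gamma\,|z_1(t)|^{2(N_0-1)}|z_2(t)|^{2N_0} + (\text{integrable remainder}),
\end{align*}
with $\Gamma>0$ by \eqref{FGR}. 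Since orbital stability keeps $|z_2(t)|^{2N_0}$ bounded below by a positive constant, this is a differential inequality of Riccati type for $|z_1|^2$ whose solution either leaves a small neighborhood of the origin in finite or infinite time, or — combined with the monotone growth — contradicts $\|z_1\|$ staying $o(1)$. Either way, $|z_1(t)|$ cannot remain small, so $u(t)$ must leave every fixed small neighborhood of the $\phi_2$-orbit, contradicting the assumed stability.

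The main obstacle is making the Fermi Golden Rule computation rigorous in the presence of the remainder terms $\mathcal R_1,\mathcal R_2,\mathcal R_\eta$: one must show that all non-resonant oscillatory integrals and the higher-order terms are genuinely integrable in time along the trajectory, which requires the dispersive/Strichartz estimates for $e^{-\im tH}P_c$ on $\Z$ (Pelinovsky–Stefanov \cite{PS08JMP}) together with the a priori smallness furnished by orbital stability to close a bootstrap for $\eta$ in the weighted spaces $l_e^{\pm b_{N_0}}$. In practice this is exactly the machinery already assembled for the proof of Theorem \ref{thm:1}, so the instability proof reuses the monotonicity estimate for $|z_1|$ established there: orbital stability of $\phi_2$ would force the $\phi_1$-channel mass to be nondecreasing yet bounded, hence convergent, while the FGR lower bound forces strictly positive growth whenever $|z_1|\neq 0$ — an immediate contradiction once one starts with $z_1(0)\neq 0$. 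A secondary technical point is ensuring $|z_*|$ can be taken small enough that Proposition \ref{normalform} and the estimate \eqref{nearid} apply uniformly; this is routine given the $\delta$ of Theorem \ref{thm:1}.
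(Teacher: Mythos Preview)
Your approach differs from the paper's. The paper (as signalled in the introduction and deferred to Theorem~1.4 of \cite{CuMaAPDE}) argues via conserved quantities rather than by tracking FGR growth directly: assuming $\phi_2(z_*)$ stable, Theorem~\ref{thm:1} forces any nearby solution to converge to $\phi_2$; one then compares $E(u)-\tfrac{e_2}{2}\|u\|_{l^2}^2$ at $t=0$ and $t=\infty$. A perturbation $v_0=\varepsilon'\phi_1$ lowers this quantity below its value on the $\phi_2$-family by $\tfrac12(e_2-e_1)\varepsilon'^2>0$, while the asymptotic radiation $e^{\im t\Delta}\eta_+$ contributes $\tfrac12\langle(-\Delta-e_2)\eta_+,\eta_+\rangle\geq 0$, a contradiction. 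This bypasses any pointwise-in-time analysis of the remainders.

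Your route through the FGR growth of $|z_1|^2$ is a legitimate alternative, but the proposal is incomplete at the decisive quantitative step. Using only the machinery of Proposition~\ref{prop:boot}, the ``integrable remainder'' in \eqref{45} integrates to $O(|z_*|^3)$ \emph{independently of the perturbation size} $\varepsilon'$; once $\varepsilon'\ll|z_*|^{3/2}$ this swamps both $|z_1(0)|^2=\varepsilon'^2$ and the main term, so neither monotonicity nor positive growth of $|z_1|^2$ follows. Since orbital instability requires arbitrarily small perturbations to destabilize, this is a genuine gap. It can be closed, but not by citing Theorem~\ref{thm:1} as-is: one must \emph{re-run} the bootstrap under the assumed stability (which supplies the uniform bounds $|z_1(t)|,\|\eta(t)\|_{l^2}\lesssim\epsilon$), sharpen $A:=\|z_1^{N_0-1}z_2^{N_0}\|_{L^2}$ and $\|g\|_{L^2l^{2,-\sigma}}$ accordingly, absorb the $O(|z_*|)A^2$-type errors in the integrated form of \eqref{45} into the main $\Gamma A^2$, and use Young's inequality on the cross terms to reduce the residual error to $O\big(\|\eta(0)\|^2+\varepsilon'^{2(N_0-1)}|z_*|^{2N_0}\big)=o(\varepsilon'^2)$. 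You correctly flag that stability provides extra smallness, but the assertion that ``this is exactly the machinery already assembled for the proof of Theorem~\ref{thm:1}'' is too quick --- the estimates there are calibrated to $\|u(0)\|_{l^2}\sim|z_*|$, not to $\varepsilon'$.
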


By Theorem \ref{thm:1}, we see that only one of the nonlinear bound state is selected and the other disappears (and by Theorem \ref{thm:2}, usually a ground state is selected).
Therefore, it is natural to ask that what amount of mass ($l^2$ norm) of the excited state will be transported to the ground state and what amount will be damped to spatial infinity.
The answer is quite surprising.
In \cite{GW11AMRX} Gang--Weinstein proved that, for continuous NLS with two eignevalues with $N_0=2$, the excited state component is divided approximately half and half. 
That is, half of the mass is damped to the spatial infinity and the other half is absorbed to the ground state.
Because of this fact, Gang--Weinstein \cite{GW11AMRX} called this phenomenon ``equipartition property".

Here, we generalize Gang--Weinstein's result (although we are considering DNLS, the same proof holds for continuous NLS with two eigenvalues).
In particular, we consider the cases for arbitrary $N_0\geq 2$ and also the case which excited states are selected.

\begin{theorem}\label{thm:3}
Under the assumption and conclusion of Theorem \ref{thm:1}, set $\varepsilon:=\|u(0)\|_{l^2}<\delta$, where $\delta$ is given in Theorem \ref{thm:1}.
Then, if $u(t)$ converges to $\phi_1(z)$, we have
\begin{align*}
\rho_+=|(u(0),\phi_1)|^2+ \frac{N_0-1}{N_0} |(u(0),\phi_2)|^2 + O(\varepsilon^4),
\end{align*}
and if 
$u(t)$ converges to $\phi_2(z)$, we have
\begin{align*}
\rho_+=\frac{N_0}{N_0-1} |(u(0),\phi_1)|^2 +|(u(0),\phi_2)|^2  + O(\varepsilon^4). 
\end{align*}
where $\rho_+$  is  given in Theorem \ref{thm:1}.
\end{theorem}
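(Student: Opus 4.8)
The plan is to extract a conservation-law-type identity from the normal form system \eqref{intro41}--\eqml{intro43} — more precisely, from the modulation equations governing $|z_1|^2$, $|z_2|^2$ and $\|\eta\|_{l^2}^2$ — and integrate it over $t\in[0,\infty)$, tracking all error terms at the level $O(\varepsilon^4)$. First I would record the three real ODEs for $N_1:=|z_1|^2$, $N_2:=|z_2|^2$, and the $l^2$-mass of $\eta$. Since $A_1,A_2$ are $\R$-valued, they do not contribute to $\dot N_j$, and the only leading contributions come from the coupling terms involving $G$. Writing $(G,\eta)$ against the $z_1,z_2$ equations one finds, after using $\Re$, that
\begin{align*}
\dot N_1 &= -2(N_0-1)\,\Im\bigl(\bar z_1^{N_0-1}z_2^{N_0}(G,\eta)\bigr)+ O(\text{h.o.}),\\
\dot N_2 &= +2N_0\,\Im\bigl(\bar z_1^{N_0-1}z_2^{N_0}(G,\eta)\bigr)+ O(\text{h.o.}),
\end{align*}
so that the combination $N_0 N_1 + (N_0-1)N_2$ (when $\phi_1$ is selected) or $(N_0-1)^{-1}$-weighted analogue is \emph{almost conserved}: its time derivative is a genuine higher-order remainder. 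This is the discrete analogue of the Gang--Weinstein bookkeeping, and the weights $\tfrac{N_0-1}{N_0}$ and $\tfrac{N_0}{N_0-1}$ appearing in the statement are exactly the ratios $(N_0-1):N_0$ visible in the coupling coefficients of \eqref{intro41}--\eqref{intro42}.

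Next I would integrate this near-conservation law from $0$ to $\infty$. On the left side, $N_1(t)+\tfrac{N_0-1}{N_0}N_2(t)\to \rho_+^2$ in the case $u(t)\to\phi_1(z)$ (using \eqref{6} together with $N_2(t)\to 0$, which is part of the proof of Theorem \ref{thm:1}: only one mode survives), and $=N_1(0)+\tfrac{N_0-1}{N_0}N_2(0)$ at $t=0$. To pass from the normal-form coordinates $(z_1,z_2,\eta)$ at $t=0$ to the physical quantities $|(u(0),\phi_j)|^2$ I would invoke the near-identity estimate \eqref{nearid}: since $|z-\tilde z|\lesssim |z|^5(\cdots)$ and $\|\eta-\tilde\eta\|\lesssim|z|^5(\cdots)$, replacing the transformed coordinates by the original ones costs only $O(\varepsilon^{6})$, which is absorbed in $O(\varepsilon^4)$; and the original $(z_1,z_2,\eta)$ decomposition is itself $O(\varepsilon^3)$-close to the naive spectral projections $(u(0),\phi_j)$ by Proposition \ref{prop:1} (since $\phi_j(z)=z\phi_j+o(|z|)$ with $q_j=O(|z|^6)$), so $N_j(0)=|(u(0),\phi_j)|^2+O(\varepsilon^4)$. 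The remaining task is to show that $\int_0^\infty (\text{h.o. remainder})\,dt = O(\varepsilon^4)$, which follows from the same a priori dispersive and $L^2$-in-time bounds already established en route to Theorem \ref{thm:1} — in particular the control $\int_0^\infty |z_1 z_2|^{2}\,dt\lesssim\varepsilon^{?}$ and $\|\eta\|_{L^2_t l_e^{-b_{N_0}}}\lesssim \varepsilon$ that the FGR mechanism provides. The case $u(t)\to\phi_2(z)$ is identical after dividing the conservation law by $(N_0-1)/N_0$, which produces the weight $N_0/(N_0-1)$ on the $|(u(0),\phi_1)|^2$ term.

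The main obstacle I anticipate is not the algebra of the near-conservation law but the bookkeeping of the error integral $\int_0^\infty(\cdots)\,dt$: one must verify that \emph{every} term in $\mathcal R_1,\mathcal R_2,\mathcal R_\eta$, as well as the cross-terms generated when squaring the $\eta$-equation to get $\tfrac{d}{dt}\|\eta\|_{l^2}^2$ and the contribution of the source term $\bar z_1^{N_0-1}z_2^{N_0}G$ to that mass balance, is integrable in time with an $\varepsilon^4$ bound. This requires carefully combining the polynomial decay of $|z_j(t)|$ (which tends to a constant, so does \emph{not} decay — hence the decisive role of the product $|z_1z_2|$, which does go to zero fast enough by the FGR) with the dispersive decay of $\eta$ in the weighted norms. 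A secondary subtlety is that the coupling source in \eqref{intro43} also transfers mass to $\eta$, so the $\eta$-mass balance must be written as $\tfrac{d}{dt}\|\eta\|_{l^2}^2 = 2\Im\langle \bar z_1^{N_0-1}z_2^{N_0}G,\eta\rangle + (\text{h.o.})$ and the leading term here must cancel against the leading terms in $\dot N_1,\dot N_2$ in the \emph{full} conservation law $N_1+N_2+\|\eta\|_{l^2}^2 = \varepsilon^2 + o(1)$; reconciling this total-mass conservation with the weighted partial conservation is what pins down the coefficient $\tfrac{N_0-1}{N_0}$ (and not, say, $\tfrac12$ except when $N_0=2$, recovering Gang--Weinstein).
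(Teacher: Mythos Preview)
Your proposal is correct and matches the paper's own argument: the key is that $N_0|z_1|^2+(N_0-1)|z_2|^2$ is almost conserved because the leading $(G,\eta)$-coupling terms in $\dot N_1,\dot N_2$ cancel under these weights, leaving only $N_0\,\Im\mathcal R_1\bar z_1+(N_0-1)\,\Im\mathcal R_2\bar z_2$, which integrates to $O(\varepsilon^4)$ by the $L^2$-in-time bounds $\|z_1^{N_0-1}z_2^{N_0}\|_{L^2}+\|\eta\|_{L^2l^{2,-\sigma}}\lesssim\varepsilon$ of Proposition~\ref{prop:boot}. The $\eta$-mass balance and the total-mass reconciliation you worry about at the end are unnecessary detours (the paper never touches $\|\eta\|_{l^2}^2$ here), and your signs in the $\dot N_j$ formulas are flipped, though this is harmless since the cancellation $N_0\dot N_1+(N_0-1)\dot N_2=\text{h.o.}$ is insensitive to that.
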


\begin{remark}
For the case $N_0=2$, if $u(t)$ converges to the ground state $\phi_1(z)$, we have $|z(t)|^2\to |(u(0),\phi_1)|^2+\frac{1}{2} |(u(0),\phi_2)|^2+O(\varepsilon^4)$, which is the equipartition property of Gang--Weinstein \cite{GW11AMRX}.
However, we note that the $N_0=2$ case as well as $N_0=3$ case need generalized version of \eqref{FGR} given in remark \ref{rem:N023}.

\end{remark}

The proof of Theorems \ref{thm:1}, \ref{thm:2} and \ref{thm:3} are based on the argument developed in \cite{CuMaAPDE}.
Following standard arguments, we first decompose the solution in the form $u(t)=\phi_1(z_1(t))+\phi_2(z_2(t))+\eta$, where $\eta$ satisfying suitable orthogonal conditions.
This will be done in section \ref{sec:Coordinates}.
By such decomposition, we reduce DNLS \eqref{1} into a system of two complex ODE and one DNLS-like PDE.
However, this system will be very complicated.
To simplify the system and moreover to be able to apply Birkhoff normal form argument, we will perform the first change of coordinate to make the coordinate to be ``canonical" (or in other words, diagonalize the symplectic form).
This is done by Darboux theorem (Proposition \ref{prop:dar}).
We next apply the Birkhoff normal form argument (Proposition \ref{prop:birk}), which is another change of coordinate, developed in \cite{Bambusi13CMPas,BC11AJM,  Cuccagna11CMP,Cuccagna12Rend, Cuccagna14TAMS, CuMaAPDE}.
By Birkhoff normal form argument, we can change the coordinate $(z_1,z_2,\eta)$ s.t.\ DNLS \eqref{1} will be a Hamiltonian equation with the new Hamiltonian $E_{\mathrm{eff}}+R$, where $R$ is the remainder.
Here, the effective Hamiltonian becomes something like
\begin{align}\label{Eeff}
E_{\mathrm{eff}}(z_1,z_2,\eta):=\frac{1}{2}\sum_{j=1,2}e_j|z_j|^2+A(|z_1|^2,|z_2|^2)+E(\eta)+\<\bar z_1^{N_0-1}z_2^{N_0}G,\eta\>,
\end{align}
where $G$ is a Schwartz function and $(z_1,z_2,\eta)\in \C\times\C\times l^2(\Z)$.
The energy in the original coordinate (see \eqref{14}) will have many terms with both resonant and nonresonant frequencies.
One can think each $z_1$ has frequency $e^{-\im e_j t}$ so the frequency of a monomial $z_1^{\mu_1}z_2^{\mu_2}\bar z_1^{\nu_1}\bar z_2^{\nu_2}$ is $e^{-\im\(e_1(\mu_1-\nu_1)+e_2(\mu_2-\nu_2)\)t}$.
So, if $e_1(\mu_1-\nu_1)+e_2(\mu_2-\nu_2)\notin[0,4]$, a first order in $\eta$ term, which is responsible to the interaction between $z$ and $\eta$, in the energy with the form $\<z_1^{\mu_1}z_2^{\mu_2}\bar z_1^{\nu_1}\bar z_2^{\nu_2} \tilde G,\eta\>$ (with some Schwartz function $\tilde G$) can be regarded as a nonresonant term and if $e_1(\mu_1-\nu_1)+e_2(\mu_2-\nu_2)\in(0,4)$, then such term is a resonant term.
The role of the Birkhoff normal form is to erase the nonresonant terms.
The last term in \eqref{Eeff} is the resonant term of the lowest order, which dominates all the other resonant terms.
By such procedure, we will arrive to the system \eqref{intro41}--\eqref{intro43}, which
 is similar to the "nonlinear toy model" of Weinstein \cite{WeinsteinSurvey}.
For the precise form of the Hamiltonian and the system, see \eqref{35} and \eqref{41}--\eqref{43}.
Now, if we set $z_j(t)=e^{-\im e_j t}z_j(0)$, then $Y:=-\bar z_1^{N_0-1}z_2^{N_0}R^+_H(\omega_{N_0})G$ becomes a solution of the third equation without the nonlinear term $P_c\beta(|\eta|^2)\eta$.
Thus, substituting $\eta= Y+``\mathrm{error}"$ to the equations of $z_j$, we obtain
\begin{align*}
&\frac 1 2 \frac{d}{dt}|z_1|^2 = (N_0-1)\Gamma|z_1|^{2(N_0-1)}|z_2|^{2N_0}+\mathrm{error},\quad
\frac 1 2 \frac{d}{dt}|z_2|^2 = -N_0 \Gamma |z_1|^{2(N_0-1)}|z_2|^{2N_0}+\mathrm{error},
\end{align*}
where $\Gamma:=-\mathrm{Im}( G, R_H^+(\omega_*)G)\geq 0$.
Then, by integrating (say) the second equation, provided $\Gamma>0$, we obtain the integrability of $|z_1|^{2(N_0-1)}|z_2|^{2N_0}$.
The assumption $\Gamma>0$ is the assumption (FGR).

Theorem \ref{thm:1} is a consequence of the above argument combined with the Strichartz and Kato smoothing estimates.
Further, Theorem \ref{thm:2} will be a easily deduced from Theorem \ref{thm:1} combined with simple observation of the energy of the initial data and the final data.
Next, notice that the Effective Hamiltonian $E_{\mathrm{eff}}$ is invariant under $(z_1,z_2,\eta)\mapsto (e^{\im N_0\theta}z_1,e^{\im (N_0-1)\theta}z_2,\eta)$.
Therefore, we have an ``almost" conservation of $N_0|z_1(t)|^2+(N_0-1)|z_2(t)|^2$.
Comparing the initial data and the final data, we will arrive to the generalize equipartition property (Theorem \ref{thm:3}).

This paper is organized as follows.
In section \ref{sec:mod}, following \cite{GNT04}, we introduce a nonlinear coordinate by a standard modulation argument.
In section \ref{sec:normal}, we introduce Darboux theorem and Birkhoff normal form arguments.
In section \ref{sec:dispersion}, we introduce some linear estimates and give estimates for the solution of \eqref{1} in Strichartz and weighted spaces by Bootstrap argument.
In section \ref{sec:proofmain}, we give the proof of Theorems \ref{thm:1}, \ref{thm:2} and \ref{thm:3}.
In section \ref{sec:prooftech}, we gathered the proofs of Darboux theorem (Proposition \ref{prop:dar}), Birkhoff normal form (Proposition \ref{prop:birk}) and a local decay estimate (Lemma \ref{lem:l4}).
This section will be technical.

\section{Nonlinear coordinates}\label{sec:mod}

In this section, we introduce the nonlinear coordinate by standard modulation argument.
Further, we expand the energy with respect to this coordinate.
\subsection{Coordinates}\label{sec:Coordinates}

We first decompose $u$ as a sum of nonlinear bound states and a function in $\mathcal H_c[z   ]$.

\begin{definition}
We set $l^2_c:=P_c l^2$.
Further, for $a\geq -a_0$, we set $l_{e,c}^{a}:=P_cl_e^a$.
Notice that $\phi_{j,A}\in l_e^{a_0}$, we can extend $P_c$ to $l_e^{-a_0}$.
\end{definition}

\begin{lemma}\label{lem:1}
There exists $\delta>0$ s.t.\ 
there exists $z=(z_1,z_2)\in C^\omega(B_{l^2}(\delta)  ;\C^2)$ s.t.
\begin{align*}
v(u   ):=u-\phi_1(z_1(u   )   )-\phi_2(z_2(u   )   )\in \mathcal H_c[z(u   )].
\end{align*}
\end{lemma}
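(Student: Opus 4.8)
The plan is to prove Lemma~\ref{lem:1} by the implicit function theorem applied to the map encoding the orthogonality conditions that define $\mathcal H_c[z]$. First I would spell out what $\mathcal H_c[z]$ is: it is the range of the projection $P_c[z]$ adapted to the nonlinear bound states, so membership $v\in\mathcal H_c[z]$ is equivalent to a finite system of scalar equations $\langle v, \Phi_{j,A}(z)\rangle = 0$ for $j=1,2$, $A=R,I$, where $\Phi_{j,A}(z)$ are the functions (close to $\phi_{j,A}$) spanning the complement of $\mathcal H_c[z]$ — concretely something like $D_{j,A}\phi_j(z)$ or the eigenfunctions of the linearization. Define
\begin{align*}
F:B_{l^2}(\delta)\times B_{\C^2}(\delta')\to\R^4,\qquad
F(u,z)_{(j,A)}:=\big\langle\, u-\phi_1(z_1)-\phi_2(z_2),\ \Phi_{j,A}(z)\,\big\rangle .
\end{align*}
Then $F(0,0)=0$ since $\phi_j(0)=0$, and we seek $z=z(u)$ with $F(u,z(u))=0$.

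The key step is to check that $D_z F(0,0)$ is invertible on $\R^4$. Using Proposition~\ref{prop:1}, $\phi_j(z)=z\phi_j+O(|z|^3)$, so $\partial_{z_{j,A}}\phi_j(z)\big|_{z=0}=\phi_{j,A}$, and $\Phi_{k,B}(0)=\phi_{k,B}$ up to lower order; since $\{\phi_{j,A}\}$ is an orthonormal-type family (the $\phi_j$ are real, normalized, and orthogonal because they are eigenfunctions of the self-adjoint $H$ for distinct eigenvalues), the matrix $\big(\langle\phi_{j,A},\phi_{k,B}\rangle\big)$ is the identity. Hence $D_zF(0,0)=-\mathrm{Id}_{\R^4}$, which is invertible. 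The analytic implicit function theorem (valid here because $\phi_j\in C^\omega$ by Proposition~\ref{prop:1} and the pairing is bilinear hence analytic) then yields $\delta>0$ and a unique $z\in C^\omega(B_{l^2}(\delta);\C^2)$ with $z(0)=0$ solving $F(u,z(u))=0$, which is exactly the assertion that $v(u)\in\mathcal H_c[z(u)]$.

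The main obstacle — or rather the point requiring the most care — is setting up the geometry of $\mathcal H_c[z]$ correctly: one must identify the right family $\Phi_{j,A}(z)$ so that the four conditions $\langle v,\Phi_{j,A}(z)\rangle=0$ genuinely characterize $\mathcal H_c[z]$ (this is the continuity/transversality of the generalized kernel of the linearized operator at $\phi_j(z)$, standard in modulation theory but needs the nonresonance hypotheses on $0,4$ to ensure the spectral subspaces vary analytically in $z$), together with verifying the analyticity of $z\mapsto\Phi_{j,A}(z)$ as maps into $l_e^{a_0}$ or at least $l^{2,\sigma}$. Once that bookkeeping is in place, the invertibility of the differential at $0$ is immediate from orthonormality of the $\phi_{j,A}$, and the conclusion follows. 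I would also note in passing that the smallness threshold $\delta$ may need to be shrunk so that the decomposition $u=\phi_1(z_1)+\phi_2(z_2)+v$ stays inside the domain $B_\C(\delta_0)$ where Proposition~\ref{prop:1} applies.
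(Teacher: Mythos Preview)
Your approach matches the paper's: it defines $\mathcal F_{j,A}(u,z):=\langle \im(u-\phi_1(z_1)-\phi_2(z_2)), D_{j,A}\phi_j(z_j)\rangle$ (so $\mathcal H_c[z]$ is the symplectic orthogonal complement of the tangent directions $D_{j,A}\phi_j(z_j)$), checks $\mathcal F(0,0)=0$ and that $D_z\mathcal F(0,0)$ is invertible via the orthonormality of $\{\phi_{j,A}\}$, and then applies the analytic implicit function theorem. The analyticity of $z\mapsto D_{j,A}\phi_j(z_j)$ comes directly from Proposition~\ref{prop:1}, so the threshold nonresonance hypotheses on $0,4$ play no role at this step.
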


\begin{proof}
The proof is standard.
Set
\begin{align*}
\mathcal F(u,z   ):=(\mathcal F_{1,R}, \mathcal F_{1,I},\mathcal F_{2,R},\mathcal F_{2,I}),
\end{align*}
where
\begin{align*}
\mathcal F_{j,A}(u,z   ):=\<\im \(u-\phi_1(z_1)-\phi_2(z_2 )\), D_{j,A}\phi_j(z_j   )\>
\end{align*}
Then, the conclusion follows from the implicit function theorem and the analyticity of $\mathcal F$ with respect to $u,z,\lambda$.
See Lemma 3.1 of \cite{MDNLS1}.
\end{proof}

Since the nonlinear continuous space $\mathcal H_c[z   ]$ depends on $z$ which depends on $u$, it varies when $u$ varies.
To fix the space where $v$ belongs, we introduce $R[z   ]:l^2_c\to \mathcal H_c[z   ]$ which is an inverse of $P_c$ restricted on $\mathcal H_c[z   ]$.
\begin{lemma}\label{lem:2}
There exists $\delta>0$ s.t.\ there exists $\alpha_{j,A}\in C^\omega (B_{\C^2}(\delta);l_e^{a_0}(\Z;\C))$ for $j=1,2$ and $A=R,I$, s.t.\ 
$
\|\alpha_{j,A}(z   )\|_{l_e^{a_0}}\lesssim|z|^6,
$
Further, 
\begin{align}\label{10}
R[z   ]\eta=\eta + \sum_{j=1,2,A=R,I}\<\alpha_{j,A}(z   ), \eta\>\phi_{j,A}.
\end{align}
satisfies $R[z   ]:l_c^2\to \mathcal H_c[z   ]$ and $\left.P_c\right|_{\mathcal H_c[z   ]}=R[z   ]^{-1}$.
\end{lemma}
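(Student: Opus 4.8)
The plan is to reduce the statement to the implicit function theorem, or really to $4\times4$ linear algebra, for the four symplectic orthogonality conditions that define $\mathcal H_c[z]$. Recall that $\mathcal H_c[z]$ is the codimension-four subspace of $l^2$ cut out by the conditions occurring in the modulation functionals $\mathcal F_{j,A}$ of Lemma \ref{lem:1}; writing $\psi_{k,B}(z):=\im D_{k,B}\phi_k(z_k)$, these read $\langle\im v,D_{k,B}\phi_k(z_k)\rangle=0$, equivalently $\langle v,\psi_{k,B}(z)\rangle=0$, for $k=1,2$ and $B=R,I$. The first observation is that with the ansatz \eqref{10} one side of the relation $\left.P_c\right|_{\mathcal H_c[z]}=R[z]^{-1}$ is essentially free: since $P_c\phi_{j,A}=0$ we get $P_cR[z]\eta=\eta$ for every $\eta\in l^2_c$, irrespective of the choice of the $\alpha_{j,A}$. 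Hence the whole content is to choose $\alpha_{j,A}(z)$, which I will look for in $l_{e,c}^{a_0}=P_cl_e^{a_0}\subset l_e^{a_0}$, so that $R[z]\eta\in\mathcal H_c[z]$ for every $\eta\in l^2_c$.

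Substituting \eqref{10} into $\langle R[z]\eta,\psi_{k,B}(z)\rangle=0$, using the symmetry of $\langle\cdot,\cdot\rangle$, the identity $\langle\eta,w\rangle=\langle\eta,P_cw\rangle$ valid for $\eta\in l^2_c$, and finally the arbitrariness of $\eta\in l^2_c$, I would reduce the problem to the $4\times4$ linear system
\begin{equation}\label{plan-sys}
\sum_{j=1,2,\ A=R,I}\langle\phi_{j,A},\psi_{k,B}(z)\rangle\,\alpha_{j,A}(z)=-P_c\psi_{k,B}(z),\qquad k=1,2,\ B=R,I,
\end{equation}
whose scalar coefficient matrix I call $M(z)$ and whose right-hand side is $l^2_c$-valued. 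At $z=0$ one has $\phi_j(z)=z\phi_j+O(|z|^7)$, hence $D_{j,R}\phi_j(0)=\phi_j$, $D_{j,I}\phi_j(0)=\im\phi_j$, so $\psi_{j,R}(0)=\phi_{j,I}$, $\psi_{j,I}(0)=-\phi_{j,R}$, and since $\{\phi_{j,A}\}$ is orthonormal for $\langle\cdot,\cdot\rangle$ the matrix $M(0)$ is the standard block-diagonal symplectic matrix, in particular invertible. As $z\mapsto\phi_j(z)$ is $C^\omega$ by Proposition \ref{prop:1}, so is $z\mapsto M(z)$; therefore $M(z)^{-1}$ exists and is real analytic on $B_{\C^2}(\delta)$ for $\delta$ small, and solving \eqref{plan-sys} gives $\alpha_{j,A}(z)=-\sum_{k,B}(M(z)^{-1})_{(j,A),(k,B)}P_c\psi_{k,B}(z)$, a real-analytic map $B_{\C^2}(\delta)\to l_{e,c}^{a_0}$; here I use $D_{k,B}\phi_k(z_k)\in l_e^{a_0}$ from Proposition \ref{prop:1} and boundedness of $P_c$ on $l_e^{a_0}$, valid since $\phi_{j,A}\in l_e^{a_0}$.

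For the bound $\|\alpha_{j,A}(z)\|_{l_e^{a_0}}\lesssim|z|^6$ the key point is that the right-hand side of \eqref{plan-sys} is already $O(|z|^6)$: indeed $\psi_{k,B}(0)\in\{\pm\phi_{k,R},\pm\phi_{k,I}\}\subset\ker P_c$, so $P_c\psi_{k,B}(z)=P_c\big(\psi_{k,B}(z)-\psi_{k,B}(0)\big)$, and $\psi_{k,B}(z)-\psi_{k,B}(0)=\im D_{k,B}\big(z_kq_k(|z_k|^2)\big)$, which is $O(|z|^6)$ in $l_e^{a_0}$ by Proposition \ref{prop:1} together with a Cauchy estimate for $q_k'$; boundedness of $M(z)^{-1}$ near the origin then gives the claim. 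It remains to upgrade the one-sided identity $P_cR[z]=\mathrm{id}_{l^2_c}$ to $R[z]^{-1}=\left.P_c\right|_{\mathcal H_c[z]}$: for $v\in\mathcal H_c[z]$ the element $R[z]P_cv-v$ lies in $\mathcal H_c[z]\cap\ker P_c$, and this intersection is $\{0\}$ for $|z|<\delta$ because an element $\sum_{j,A}c_{j,A}\phi_{j,A}$ of $\ker P_c$ belongs to $\mathcal H_c[z]$ if and only if $M(z)(c_{j,A})=0$, which forces all $c_{j,A}=0$; hence $R[z]P_c=\mathrm{id}_{\mathcal H_c[z]}$ and $R[z]$ is the inverse of $\left.P_c\right|_{\mathcal H_c[z]}$.

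I do not anticipate a genuine obstacle: the statement is at its core the implicit function theorem together with small-matrix linear algebra, much as in the proof of Lemma \ref{lem:1}. The only step requiring real care is the weighted-space bookkeeping — checking that $D_{j,A}\phi_j(z_j)\in l_e^{a_0}$ with a remainder of size $O(|z|^6)$ off its value at $z=0$, and that every map entering the construction is genuinely real analytic into $l_e^{a_0}$ — which is routine given Proposition \ref{prop:1} and Cauchy estimates.
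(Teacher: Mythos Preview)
Your proposal is correct and is precisely the ``standard'' argument the paper alludes to by citing \cite{MDNLS1}: reduce the construction of $R[z]$ to a $4\times 4$ linear system for the coefficients $\alpha_{j,A}$, observe that the coefficient matrix at $z=0$ is the invertible symplectic matrix coming from the orthonormality of $\{\phi_{j,A}\}$, and read off analyticity and the $O(|z|^6)$ bound from Proposition~\ref{prop:1}. Your treatment of the two-sided inverse via $\mathcal H_c[z]\cap\ker P_c=\{0\}$ is also the standard closing step.
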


\begin{proof}
The proof is standard.
See, for example \cite{MDNLS1}.
\end{proof}

Combining Lemmas \ref{lem:1} and \ref{lem:2}, we have a coordinate in $B_{l^2}(\delta)$ for sufficiently small $\delta>0$.
\begin{lemma}\label{lem:3}
Let $\delta>0$ sufficiently small.
Then 
\begin{align}\label{11}
\mathcal F(z,\eta   ):=\sum_{j=1,2}\phi_j(z_j   )+R[z   ]\eta \in C^\omega(B_{\C^2\times l_c^2}(\delta)  ;l^2),
\end{align}
is a $C^\omega$ diffeomorphism to the $l^2$ neighborhood of the origin.
Further, we have
\begin{align}\label{12}
|z_1|+|z_2|+\|\eta\|_{l^2}\sim \|\mathcal F(z,\eta   )\|_{l^2}.
\end{align}
\end{lemma}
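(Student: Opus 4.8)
The plan is to verify that $\mathcal F$ is analytic, to produce a two-sided inverse for it directly from Lemmas \ref{lem:1} and \ref{lem:2} (so that $\mathcal F$ is automatically a $C^\omega$ diffeomorphism onto a neighborhood of $0$), and then to read off the norm equivalence \eqref{12} by projecting $\mathcal F(z,\eta)$ onto $l^2_c$ and onto $\mathrm{span}\{\phi_{j,A}\}$. That $\mathcal F\in C^\omega(B_{\C^2\times l^2_c}(\delta);l^2)$, after shrinking $\delta$, is a composition statement: by Proposition \ref{prop:1} each $z_j\mapsto\phi_j(z_j)=z_j\tilde\phi_j(|z_j|^2)$ is real analytic into $l^2$, being $z_j$ times the composition of $z_j\mapsto|z_j|^2$ with the analytic map $\tilde\phi_j$; by \eqref{10} and Lemma \ref{lem:2}, $z\mapsto R[z]$ is real analytic into $\mathcal L(l^2_c;l^2)$, so $(z,\eta)\mapsto R[z]\eta$ is analytic; summing gives the claim.

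For the inverse, given $u\in B_{l^2}(\delta)$ let $z(u)$ be as in Lemma \ref{lem:1}, so that $v(u):=u-\phi_1(z_1(u))-\phi_2(z_2(u))\in\mathcal H_c[z(u)]$, and set $\eta(u):=P_cv(u)\in l^2_c$. Since $R[z(u)]$ is the inverse of the restriction of $P_c$ to $\mathcal H_c[z(u)]$ by Lemma \ref{lem:2}, we have $R[z(u)]\eta(u)=v(u)$, hence $\mathcal F(z(u),\eta(u))=u$. Conversely, for $(z,\eta)$ in a small ball, $R[z]\eta\in\mathcal H_c[z]$, so $u:=\mathcal F(z,\eta)$ already has the form covered by Lemma \ref{lem:1}; the uniqueness there forces $z(u)=z$, whence $v(u)=R[z]\eta$ and $P_cv(u)=P_cR[z]\eta=\eta$ (using $P_c\phi_{j,A}=0$ and $P_c\eta=\eta$). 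Thus $u\mapsto(z(u),P_cv(u))$ --- which is $C^\omega$ since $z(\cdot)$ and $v(\cdot)$ are analytic and $P_c\in\mathcal L(l^2)$ --- is a two-sided inverse of $\mathcal F$ on an $l^2$-neighborhood of the origin, so $\mathcal F$ is a $C^\omega$ diffeomorphism onto such a neighborhood. Equivalently, one may compute $D\mathcal F(0,0)(\zeta_1,\zeta_2,\dot\eta)=\sum_{j=1,2}\(\zeta_{j,R}\phi_{j,R}+\zeta_{j,I}\phi_{j,I}\)+\dot\eta$, observe that this is a Banach-space isomorphism because $\{\phi_{j,A}\}$ is an orthonormal basis of the orthogonal complement of $l^2_c$, and invoke the analytic inverse function theorem.

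The norm equivalence \eqref{12} is then elementary. The bound $\|\mathcal F(z,\eta)\|_{l^2}\lesssim|z_1|+|z_2|+\|\eta\|_{l^2}$ follows from the triangle inequality together with $\|\tilde\phi_j(|z_j|^2)\|_{l^2}=1+O(|z_j|^6)$ and \eqref{10}. For the reverse bound I would use that $P_c$ is the orthogonal projection onto $l^2_c$ and $\{\phi_{j,A}\}$ is orthonormal: from $\<q_j,\phi_j\>=0$, $\<\phi_1,\phi_2\>=0$, $P_c\phi_{j,A}=0$, and the size estimates $\|q_j(|z_j|^2)\|_{l^2}\lesssim|z_j|^6$, $\|\alpha_{j,A}(z)\|_{l^2}\lesssim|z|^6$, one gets
\begin{align*}
\<\mathcal F(z,\eta),\phi_{j,A}\>=z_{j,A}+O\(|z|^7+|z|^6\|\eta\|_{l^2}\),\qquad P_c\mathcal F(z,\eta)=\eta+O\(|z|^7\),
\end{align*}
and then Pythagoras gives $\|\mathcal F(z,\eta)\|_{l^2}^2\geq\tfrac14\(|z_1|^2+|z_2|^2+\|\eta\|_{l^2}^2\)$ once $\delta$ is sufficiently small, which is \eqref{12}.

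I expect the only genuinely delicate point to be showing that the map of Lemmas \ref{lem:1}--\ref{lem:2} is a two-sided inverse on a fixed $l^2$-ball: this requires reconciling the $z$-dependent space $\mathcal H_c[z]$ with the fixed space $l^2$ through $R[z]$, and that is exactly what the two facts that $R[z]$ composed with $P_c$ on $\mathcal H_c[z]$ is the identity and that $P_cR[z]=\mathrm{id}$ on $l^2_c$ (from Lemma \ref{lem:2}) supply, with the uniqueness clause of the implicit function theorem in Lemma \ref{lem:1} taking care of the rest. Everything else is routine analytic bookkeeping.
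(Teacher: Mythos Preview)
The paper does not give a proof of this lemma; it simply prefaces the statement with ``Combining Lemmas \ref{lem:1} and \ref{lem:2}, we have a coordinate in $B_{l^2}(\delta)$'' and leaves the details to the reader. Your argument is correct and carries out exactly what the paper has in mind: you build the inverse of $\mathcal F$ directly from the maps $u\mapsto z(u)$ of Lemma \ref{lem:1} and $R[z]^{-1}=\left.P_c\right|_{\mathcal H_c[z]}$ of Lemma \ref{lem:2}, using the uniqueness clause of the implicit function theorem to close the loop, and then obtain \eqref{12} by projecting onto $\mathrm{span}\{\phi_{j,A}\}$ and $l^2_c$. The alternative route via $D\mathcal F(0,0)$ and the analytic inverse function theorem is also valid and is the other standard way to see this.
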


\begin{remark}
Notice that
\begin{align}\label{13}
\psi(z,\eta:\lambda):=\phi_1(z_1)+\phi_2(z_2)+(R[z]-1)\eta \in C^\omega(B_{\C^2\times l_{e,c}^{-a_0}}(0,\delta)  ;l_{e}^{a_0}).
\end{align}
\end{remark}

We will define $\eta(u)$ from \eqref{11}.
\begin{definition}\label{def:eta}
Let $\delta>0$ sufficiently small.
We set $\eta\in C^\omega(B_{l^2}(\delta)  ;l^2_c)$ by
\begin{align}\label{eq:defeta}
\eta(u):= P_c \(u-\sum_{j=1,2}\phi_j(z_j(u)   )\),
\end{align}
where $z_j$ are given by Lemma \ref{lem:1}.
\end{definition}

\subsection{Expansion of energy}

It is well known that \eqref{1} conserves the $l^2$-norm and the energy:
\begin{align}\label{13.1}
E(u   ):=\frac 1 2 \< Hu,u\> + \frac 1 2 \sum_{n\in\N}B(|u(n)|^2   ),
\end{align}
where
\begin{align*}
B(s   ):=\int_0^s\beta(s   )\,ds=\frac 1 4 \|u\|_{l^8}^8+\sum_{j=4}^M \frac{\lambda_j}{2 j+2} \|u\|_{l^{2j+2}}^{2j+2}.
\end{align*}
We set $E^0(z,\eta   ):=\mathcal F^* E(z,\eta   ):=E(\mathcal F(z,\eta   )   )$.
Our interest here is the expansion of $E^0$ with respect to $z$ and $\eta$.
However, before that we introduce a notation mainly to represent the remainder terms.

\begin{definition}\label{def:remainder}
We set
\begin{align*}
\mathcal R_X(a,\delta):= C^\omega(B_{\C^2\times l_{e,c}^{-a}}(\delta)  ;X),
\end{align*}
where $X=\R,\C,\C^2$, $l_e^a$ and $l_{e,c}^a$.
\end{definition}

Since (one of) our aim is to show $|z_1z_2|\to 0$, we set 
\begin{align}\label{def:Z}
Z:=Z(z):=z_1\bar z_2,
\end{align}
and count how many $Z$ is there in each terms of the energy expansion.

\begin{proposition}[Energy expansion]\label{prop:expansion1}
There exists $a>0$ s.t.\ 
for all $k\geq 1$, $0\leq l\leq k$ and $j=0,1,2$, there exist $C_{k,j,l}\in C^\omega(B_{\R}(0,\delta_0^2)  ;\R)$ and for all $k\geq 0$, $0\leq l \leq k$ and $j=1,2$, there exist $G_{k,j,l}\in C^\omega(B_{\R}(0,\delta_0^2)  ;l_e^{a})$ s.t.
\begin{align}
E^0(z,\eta   )=&E_1(|z_1|^2   )+E_2(|z_2|^2   )+E^0(0,\eta   )\label{14}\\&+\sum_{k\geq 1} \sum_{j=0,1,2}\sum_{0\leq l\leq k}C_{k,j,l}(|z_j|^2   )Z^{l}\bar Z^{k-l} + \sum_{k\geq 0}\sum_{j=1,2}\sum_{0\leq l\leq k}\<Z^l\bar Z^{k-l}z_jG_{k,j,l}(|z_j|^2   ),\eta\>\nonumber\\&+ R(z,\eta   ),\nonumber
\end{align}
where $z_0\equiv 0$, $R\in \mathcal R_\R(a,\delta_0)$ with $|R(z,\eta   )|\lesssim |z|\(|z|+\|\eta\|_{l_e^{-a}}\)^5\|\eta\|_{l_e^{-a}}^2$ and $E_j(|z_j|^2   )=E(\phi_j(z_j   ))$.
Further, we have $\overline{C_{k,j,l}}=C_{k,j,k-l}$ and $C_{1,j,l}(0   )=0$ for $l=0,1,j=0,1,2$.
\end{proposition}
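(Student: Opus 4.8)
The plan is to substitute the coordinate map $\mathcal F(z,\eta)=\phi_1(z_1)+\phi_2(z_2)+R[z]\eta$ into the energy $E$ given in \eqref{13.1} and organize the resulting expression by degree of homogeneity, separating the "pure bound state" part, the "pure $\eta$" part, the mixed terms, and a genuinely higher-order remainder. First I would write $u=\mathcal F(z,\eta)=\Phi + \rho$ where $\Phi:=\phi_1(z_1)+\phi_2(z_2)$ and $\rho:=R[z]\eta=\eta+\sum_{j,A}\<\alpha_{j,A}(z),\eta\>\phi_{j,A}$, using Lemma \ref{lem:2}; note $\|\rho-\eta\|_{l_e^{a_0}}\lesssim|z|^6\|\eta\|_{l^2_c}$ and $\rho\in\mathcal H_c[z]$. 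Expanding the quadratic form, $\tfrac12\<Hu,u\>=\tfrac12\<H\Phi,\Phi\>+\<H\Phi,\rho\>+\tfrac12\<H\rho,\rho\>$; here $\tfrac12\<H\phi_j(z_j),\phi_j(z_j)\>$ together with the corresponding part of $B$ reassembles into $E_j(|z_j|^2)=E(\phi_j(z_j))$ after using the bound state equation \eqref{4} (the cross term $\<H\phi_1(z_1),\phi_2(z_2)\>$ must be shown to combine with cross terms from $B$ and from $\<H\Phi,\rho\>$; since $\phi_j(z_j)=z_j(\phi_j+q_j)$ and $\<\phi_i,\phi_j\>=\delta_{ij}$, $\<H\phi_1,\phi_2\>$ is $O(|z_1z_2||z|^6)$ and feeds the remainder $R$). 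The term $\<H\Phi,\rho\>$ is rewritten using $H\phi_j(z_j)=E_j(|z_j|^2)\phi_j(z_j)-\beta(|\phi_j(z_j)|^2)\phi_j(z_j)$ from \eqref{4} and the orthogonality $\<\phi_j(z_j),\rho\>=O(|z|^6\|\eta\|)$ implied by $\rho\in\mathcal H_c[z]$, so that $\<H\Phi,\rho\>$ reduces to $-\sum_j\<\beta(|\phi_j(z_j)|^2)\phi_j(z_j),\rho\>$ plus remainder; and $\tfrac12\<H\rho,\rho\>$ equals $\tfrac12\<H\eta,\eta\>$ plus $O(|z|^6\|\eta\|^2)$ terms absorbed into $R$.

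Next I would expand the nonlinear potential energy $\tfrac12\sum_n B(|u(n)|^2)$. Since $B(s)=\tfrac14 s^4+\sum_{j\ge4}\tfrac{\lambda_j}{2j+2}s^{j+1}$ is polynomial in $s$ and $|u(n)|^2=|\Phi(n)|^2+2\Re(\Phi(n)\overline{\rho(n)})+|\rho(n)|^2$, each $B(|u(n)|^2)$ expands into a finite sum of monomials in $\Phi(n),\overline{\Phi(n)},\rho(n),\overline{\rho(n)}$; summing in $n$ and using $\Phi=z_1(\phi_1+q_1)+z_2(\phi_2+q_2)$ produces monomials $z_1^{\mu_1}z_2^{\mu_2}\bar z_1^{\nu_1}\bar z_2^{\nu_2}$ with coefficients that are $l^1$-pairings of products of $\phi_j$ and $q_j(|z_j|^2)$ — hence real-analytic in $|z_1|^2,|z_2|^2$ after reorganizing, and the $\eta$-linear part has the form $\<(\text{monomial in }z)\,\tilde G,\eta\>$ with $\tilde G$ a product of $\phi$'s (times $q$'s), which lies in $l_e^a$ for small $a>0$ by the exponential localization of $\phi_j$ and $q_j\in l_e^{a_0}$. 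The gauge invariance $B(|e^{i\theta}\cdot|^2)=B(|\cdot|^2)$ forces every surviving monomial to have total $z$-degree zero, i.e. $\mu_1+\mu_2=\nu_1+\nu_2$ in the zeroth-$\eta$ part (and $\mu_1+\mu_2=\nu_1+\nu_2+1$ in the $\eta$-linear part), which is exactly what lets me rewrite each such monomial as $|z_j|^{2\cdot}\,Z^l\bar Z^{k-l}$ with $Z=z_1\bar z_2$: given $\mu_1-\nu_1 = -(\mu_2-\nu_2)=:d$ one factors out $Z^d\bar Z^0$ (or $\bar Z^{|d|}$) and the leftover $\min$-powers of $|z_1|^2,|z_2|^2$; collecting by total "off-diagonal degree" $k=|d|+2(\text{leftover})$ and by $l$ gives the stated double sum with coefficients $C_{k,j,l}(|z_j|^2)$ and $G_{k,j,l}(|z_j|^2)$. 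The reality $\overline{C_{k,j,l}}=C_{k,j,k-l}$ follows since $E^0$ is real and $\overline{Z^l\bar Z^{k-l}}=Z^{k-l}\bar Z^l$; the vanishing $C_{1,j,l}(0)=0$ for $l=0,1$ follows because a $k=1$ term is $C_{1,j,0}\bar Z+C_{1,j,1}Z$, linear in one of $z_1\bar z_2$, $\bar z_1 z_2$, and the lowest such contributions in the energy (cubic in $\Phi$) vanish because $\beta(s)=O(s^3)$ kills cubic and quintic terms, so these coefficients start at order $|z_j|^2$ at least, giving $C_{1,j,l}(0)=0$.

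Finally I would collect all terms that are quadratic or higher in $\eta$ together with all the $|z|^6$-type corrections coming from $q_j$, from $R[z]-1$, and from the cross terms, into $R(z,\eta)$, and verify the bound $|R(z,\eta)|\lesssim|z|(|z|+\|\eta\|_{l_e^{-a}})^5\|\eta\|_{l_e^{-a}}^2$: every quadratic-in-$\eta$ contribution carries either an explicit factor $\beta(\cdot)$ (hence degree $\ge6$ in $(\Phi,\rho)$, producing the $(|z|+\|\eta\|)^5$) or a coefficient $\|\alpha_{j,A}(z)\|_{l_e^{a_0}}\lesssim|z|^6$ from $R[z]-1$; tracking the minimal $z$-power and the two $\eta$-factors gives the claimed estimate, and the membership $R\in\mathcal R_\R(a,\delta_0)$ is inherited from the real-analyticity of $\mathcal F$ (Lemma \ref{lem:3}), of $E$, and of $\beta$. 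The main obstacle, and the step I would spend the most care on, is the bookkeeping in the previous paragraph: showing that after using gauge invariance every monomial really does reorganize into the form $Z^l\bar Z^{k-l}$ times a real-analytic function of $|z_j|^2$ alone (in particular that one never needs a genuine mixed $|z_1|^2|z_2|^2$-dependence that cannot be folded into $C_{k,j,l}(|z_j|^2)$ — this is why the index $j\in\{0,1,2\}$ with $z_0\equiv0$ is allowed, so that "diagonal" terms depending on only one $|z_j|^2$ and the constant $j=0$ term are all accommodated), and checking the exponential-weight membership $G_{k,j,l}\in C^\omega(B_\R(\delta_0^2);l_e^a)$ uniformly, which requires $a<a_0$ and the fact that finite products of functions in $l_e^{a_0}$ with bounded functions stay in $l_e^{a}$.
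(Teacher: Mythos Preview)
Your overall plan---direct substitution of $\mathcal F(z,\eta)$ into $E$ and term-by-term bookkeeping---is viable, but there are two concrete errors.

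\textbf{(1) The quadratic cross term cannot go into $R$.} You say that $\langle H\phi_1(z_1),\phi_2(z_2)\rangle$, being $O(|z_1z_2||z|^6)$, ``feeds the remainder $R$.'' But the stated bound on the remainder is $|R(z,\eta)|\lesssim |z|(|z|+\|\eta\|_{l_e^{-a}})^5\|\eta\|_{l_e^{-a}}^2$, so $R$ must vanish at $\eta=0$. Any $\eta$-independent contribution, in particular this cross term, must be absorbed into the $C_{k,j,l}$ sum, not into $R$.

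\textbf{(2) The argument for $C_{1,j,l}(0)=0$ is wrong.} A bare $Z$ or $\bar Z$ term is \emph{quadratic} in $z$, not ``cubic in $\Phi$,'' so the only possible source is the quadratic part $\tfrac12\langle Hu,u\rangle$; the relevant piece is $\langle Hz_1\phi_1,z_2\phi_2\rangle=e_1\Re(z_1\bar z_2)\langle\phi_1,\phi_2\rangle=0$ by orthogonality of eigenfunctions. The vanishing has nothing to do with $\beta(s)=O(s^3)$: the nonlinear part $\sum_n B(|u(n)|^2)$ starts at degree $8$ in $u$ and cannot produce a degree-$2$ term regardless of the order of $\beta$. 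This is exactly how the paper argues it, and error (1) is what led you astray here---having already (incorrectly) disposed of the quadratic cross term, you looked for the source of $C_{1,j,l}(0)$ in the nonlinear part.

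On method: the paper uses a different decomposition, $u=\eta+\psi$ with $\psi:=\phi_1(z_1)+\phi_2(z_2)+(R[z]-1)\eta\in l_e^{a_0}$, and the Taylor identity $E(\eta+\psi)=E(\eta)+\int_0^1\langle\nabla E(\eta+s\psi),\psi\rangle\,ds$. Because $\psi$ is exponentially localized uniformly in $(z,\eta)$, one can write $\langle|\eta+s\psi|^{2n}(\eta+s\psi),\psi\rangle=\langle|e^{-a|\cdot|}(\eta+s\psi)|^{2n}e^{-a|\cdot|}(\eta+s\psi),e^{(2n+1)a|\cdot|}\psi\rangle$ and get $\mathcal R_\R(a,\delta_0)$ membership in one stroke. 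Your direct expansion can reach the same conclusion, but note that $E$ itself is \emph{not} real-analytic on $l_{e,c}^{-a}$ (for instance $\|\eta\|_{l^8}^8$ is not continuous there), so the line ``inherited from the real-analyticity of $\mathcal F$, $E$, and $\beta$'' is not a proof: every term you place in $R$ must carry at least one explicit localized factor to justify the $l_e^{-a}$ dependence, and this has to be checked rather than asserted.
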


\begin{proof}
First, set $\psi(z,\eta   ):=\phi_1(z_1)+\phi_2(z_2)+\sum_{j=0,1,A=R,I}\<\alpha_{j,A}(z   ),\eta\>\phi_{j,A}$.
Then, we have $\mathcal F(z,\eta   )=\eta+\psi$ and $\psi\in \mathcal R_{l_e^{a_0}}(a_0,\delta)$ and $|\psi|\lesssim |z|$ (see \eqref{13}).
Now, by Taylor expansion, we have
\begin{align}\label{20}
&E(\eta+\psi(z,\eta   )   )=E(\eta   )+\int_0^1\<\nabla E(\eta+ s \psi   ), \psi\>\,ds.
\end{align}
Notice that $E(\eta   )=E^0(0,\eta   )$.
We now show that the second term in the r.h.s.\ of \eqref{20} is $\mathcal R_\R(a,\delta_0)$ for some $a>0$.
Expanding the second term of the r.h.s.\ of \eqref{20}, we have
\begin{align*}
\int_0^1\<\nabla E(\eta+ s \psi   ), \psi\>\,ds=
\int_0^1\<H(\eta+s\psi),\psi\>\,ds+\int_0^1\< \beta(|\eta+s\psi|^2)(\eta+s\psi),\psi\>.
\end{align*}
The first term is obviously in $\mathcal R_\R(a_0,\delta_0)$ and bounded by $\lesssim |z|$.
For the second term, first notice that $$\int_0^1|\eta+s\psi|^{2n}(\eta+s\psi)\,ds\in \mathcal R_{l^2}(0,\delta),$$
for $n\geq 0$.
Thus, for any $a>0$, we have
\begin{align*}
\int_0^1 |e^{-a|\cdot|}(\eta+s\psi)|^{2n}e^{-a|\cdot|}(\eta+s\psi)\,ds \in \mathcal R_{l^2}(a,\delta).
\end{align*}
Now, since
\begin{align*}
\<\int_0^1 |\eta+s\psi|^{2n}(\eta+s\psi)\,ds,\psi\>=\<\int_0^1 |e^{-\frac{a_0}{(2n+1)}|\cdot|}(\eta+\psi)|^{2n}e^{-\frac{a_0}{(2n+1)}|\cdot|}(\eta+\psi), e^{a_0 |\cdot|}\psi\>,
\end{align*}
we see that the above term belongs to $\mathcal R_\R(a_0/(2n+1),\delta)$.
Therefore, we have
\begin{align*}
\int_0^1 \<\nabla E(\eta+s \psi),\psi\>\,ds\in \mathcal R_\R(a,\delta_0),
\end{align*}
with $a=a_0/(2M+1)$(Recall \eqref{betapoly}).
Thus, expanding $\int_0^1 \<\nabla E(\eta+s \psi),\psi\>\,ds$ with respect to $z_1,z_2,\eta$ and rearranging them, we have the expansion \eqref{14}.

Finally, the property $\overline{C_{k,j,l}}=C_{k,j,k-l}$ comes from the fact that $E$ is real valued and the property $C_{1,j,l}(0)=0$ comes from the fact that the only possible source of such term are $\<H z_j\phi_j, z_{3-j}\phi_{3-j}\>$ and they are $0$.
\end{proof}

\section{Normal form argument}\label{sec:normal}

In this section, we change of the coordinate given in Lemma \ref{lem:3} by Darboux theorem (Proposition \ref{prop:dar}) and Birkhoff normal form argument (Proposition \ref{prop:birk}).
The proof will given in sections \ref{sec:proofdar} and \ref{sec:proofbirk}.
As explained in the introduction, the role of Darboux theorem is to diagonalize the symplectic form and make it possible to proceed the Birkhoff normal form argument.
Next, by the Birkhoff normal form argument we erase the nonresonant terms.

\subsection{Darboux theorem}
Set $\Omega(X,Y):=\<\im X, Y\>$.
We define a new symplectic form $\Omega_0$ by
\begin{align*}
\Omega_0(X,Y):=&\sum_{j=1,2}\Omega(d \phi_j(z_j)X, d\phi_j(z_j)Y)+\Omega(d\eta X,d\eta Y)\\=&\sum_{j=1,2}\frac{\im}{2}(1+\gamma_j(|z_j|^2))dz_j\wedge d\bar z_j (X,Y) + \Omega(d\eta X,d\eta Y),
\end{align*}
where $dz_j,d\eta$ are the Fr\'echet derivative of $z_j, \eta$ given in Lemma \ref{lem:1} and Definition \ref{def:eta} respectively, $$dz_j\wedge d \bar z_j(X,Y)=X_j Y_{\bar j}-X_{\bar j}Y_j$$ with $X_{j}=dz_j(X)$, $X_{\bar j}=d\bar z_j (X)$
and
$$\gamma(|z_j|^2)=\<\tilde q + |z_j|^2 \tilde q' (|z_j|^2),\tilde q + |z_j|^2 \tilde q' (|z_j|^2)\>+|z_j|^4\<\tilde q'(|z_j|^2),\tilde q'(|z_j|^2)\>.$$
Note that $\Omega$ is the symplectic form associated to the Hamilton equation \eqref{1}.
We want to change $\Omega$ to $\Omega_0$, which has no cross terms.

\begin{proposition}\label{prop:dar}
For $a>0$ given in Proposition \ref{prop:expansion1}, there exists $\delta>0$ s.t.\ there exists $\tilde{\mathcal Y}^D\in \mathcal R_{l_e^{a}}(a,\delta)$ s.t. $\mathcal Y^D:=\mathrm{Id}+\tilde{\mathcal Y}^D$ is a $C^\omega$ diffeomorphism and $\mathcal Y^*\Omega=\Omega_0$.
Further, setting $z^D(z,\eta   ):=(\mathcal Y^D)^*z$ and $\eta^D(z,\eta   ):=(\mathcal Y^D)^*\eta$, we have
\begin{align}\label{24}
|z^D - z| + \|\eta^D-\eta\|_{l_e^{a}} \lesssim |z|^6  \|\eta\|_{l_e^{-a}}+|z|^5|z_1z_2|.
\end{align}
\end{proposition}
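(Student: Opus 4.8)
The plan is to use Moser's deformation (path) method. Define the family of symplectic forms $\Omega_t := \Omega_0 + t(\Omega - \Omega_0)$ for $t\in[0,1]$, interpolating between $\Omega_0$ (at $t=0$) and $\Omega$ (at $t=1$). First I would verify that $\Omega - \Omega_0$ is exact, writing $\Omega - \Omega_0 = d\alpha$ for a suitable $1$-form $\alpha$ with quantitative bounds: since $\Omega(X,Y) = \langle \im X,Y\rangle$ is the constant (flat) form while $\Omega_0$ differs from it only through the $z$-dependent factors $1+\gamma_j(|z_j|^2)$ and through the change of coordinates $(z,\eta)\mapsto \mathcal F(z,\eta)$ of Lemma \ref{lem:3}, the difference $\Omega-\Omega_0$ vanishes to high order at the origin. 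Concretely one expects $\alpha$ to be $O(|z|\,(\,|z|^5|z_1z_2| + |z|^6\|\eta\|)\,)$-small in the appropriate weighted norm; the key structural point, which powers the final estimate \eqref{24}, is that $\Omega-\Omega_0$ contains only "cross terms" (mixing the two modes $z_1,z_2$, or mixing $z$ with $\eta$) together with the $\gamma_j$ corrections which are themselves $O(|z|^{12})$ by Proposition \ref{prop:1} (recall $\|q_j\|\lesssim|z|^6$). I would track these orders carefully so that the primitive $\alpha$ inherits the factor $|z|^5|z_1z_2| + |z|^6\|\eta\|_{l_e^{-a}}$.

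Next, for each $t$ I would solve the homological equation $\iota_{\mathcal X_t}\Omega_t = -\alpha$ for a time-dependent vector field $\mathcal X_t$; this requires $\Omega_t$ to be nondegenerate for all $t\in[0,1]$, which holds on a small enough ball $B_{\C^2\times l^2_{e,c}}(\delta)$ because $\Omega_t$ is a small perturbation of the invertible flat form $\im$. Inverting $\Omega_t$ costs a bounded factor, so $\mathcal X_t$ inherits the smallness and analyticity of $\alpha$; in particular $\mathcal X_t \in \mathcal R_{l_e^a}(a,\delta)$ with norm controlled by $|z|^5|z_1z_2| + |z|^6\|\eta\|_{l_e^{-a}}$. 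Then I would let $\mathcal Y^D$ be the time-$1$ flow of $\mathcal X_t$. The standard Lie-derivative computation $\frac{d}{dt}(\Phi_t^*\Omega_t) = \Phi_t^*(\mathcal L_{\mathcal X_t}\Omega_t + \partial_t\Omega_t) = \Phi_t^*\big(d\iota_{\mathcal X_t}\Omega_t + (\Omega-\Omega_0)\big) = \Phi_t^*(-d\alpha + d\alpha) = 0$ gives $(\mathcal Y^D)^*\Omega = \Omega_0$. Writing $\mathcal Y^D = \mathrm{Id} + \tilde{\mathcal Y}^D$, the bound \eqref{24} on $z^D-z$ and $\eta^D-\eta$ follows by integrating the flow ODE $\dot\Phi_t = \mathcal X_t\circ\Phi_t$ and using Gronwall together with the smallness of $\mathcal X_t$; the analyticity ($C^\omega$) of $\tilde{\mathcal Y}^D$ comes from analytic dependence of solutions of analytic ODEs on initial data, since all the data ($\beta$, the bound states $\phi_j(z_j)$, the coordinate map $\mathcal F$) are real analytic.

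The main obstacle I anticipate is the bookkeeping in the weighted spaces $l_e^a$, $l_e^{-a}$: one must check at every stage (constructing $\alpha$, inverting $\Omega_t$, integrating the flow) that the exponentially weighted norms close up, i.e. that one never needs more decay than is available, so that $a$ can be chosen uniformly as in Proposition \ref{prop:expansion1}. The point is that the $\eta$-dependence of $\Omega-\Omega_0$ enters only through the smoothing, exponentially localized operators $\alpha_{j,A}(z)\in l_e^{a_0}$ of Lemma \ref{lem:2}, so pairing against $\eta$ only ever sees $\eta$ through $\langle \alpha_{j,A},\eta\rangle$, which is controlled by $\|\eta\|_{l_e^{-a}}$; this is what lets the primitive $\alpha$ — and hence $\mathcal X_t$ and the whole flow — map into $l_e^a$ while only costing an $l_e^{-a}$ norm of $\eta$ on the input side. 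A secondary subtlety is verifying nondegeneracy of $\Omega_t$ uniformly in $t$ on a fixed ball, which amounts to a Neumann-series bound on $(\Omega - \Omega_0)$ relative to $\im$; this is routine but must be done to legitimize solving the homological equation.
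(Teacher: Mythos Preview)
Your proposal is correct and is essentially the same argument as the paper's: Moser's deformation with $\Omega_s=\Omega_0+s(\Omega-\Omega_0)$, a primitive $\alpha$ (the paper calls it $\Gamma$) with $d\alpha=\Omega-\Omega_0$, solving $\iota_{\mathcal X^s}\Omega_s=-\alpha$ by a Neumann-series inversion, and integrating the flow. The only point worth flagging is how the primitive is actually produced with the sharp bound \eqref{24}: the paper does not use a Poincar\'e-lemma contraction but instead starts from the natural potentials $B(u)X=\tfrac12\Omega(u,X)$ and $B_0(u)X=\tfrac12\big(\sum_j\Omega(\phi_j,d\phi_j X)+\Omega(\eta,d\eta X)\big)$, then explicitly subtracts an exact piece $dC$ from $B-B_0$ so that the remaining $1$-form $\Gamma$ visibly carries the factor $|z|^5|z_1z_2|+|z|^6\|\eta\|_{l_e^{-a}}$ (your guess of an extra $|z|$ is slightly pessimistic but harmless). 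This explicit cancellation is the one place where ``tracking orders carefully'' hides real work; once it is done, your weighted-space bookkeeping and Neumann-series remarks match the paper exactly.
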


The proof is similar to the proof of Darboux theorem in \cite{MDNLS1} (see also \cite{CuMaAPDE}).
However, for convenience of the readers, we give the proof in section \ref{sec:proofdar}

Let $F\in C^1(B_{l^2}(\delta);\R)$.
Then the Hamiltonian vector field $X_F$ with respect to the symplectic form $\Omega_0$ is defined by the relation $i_{X_F}\Omega_0 = dF$.
Comparing
\begin{align*}
&\Omega_0(X_F,Y)=\frac \im 2 \sum_{j=1,2}(1+\gamma_j(|z_j|^2))\((X_F)_{j}Y_{\bar j}-(X_F)_{\bar j}Y_j\)+\Omega ((X_F)_\eta, Y_\eta),
\end{align*}
and
\begin{align*}
\<\nabla F, Y\>=\partial_j F Y_j +\partial_{\bar j} F Y_{\bar j} + \<\nabla_\eta F, Y_\eta\>,
\end{align*}
we have
\begin{align}
&(X_F)_j = -2\im(1+\tilde \gamma_j(|z_j|^2))\partial_{\bar j}F,\quad (X_F)_{\bar j} =2\im (1+\tilde \gamma_j(|z_j|^2))\partial_{j}F,\label{25}\\&
(X_F)_\eta :=d\eta (X_F)= -\im \nabla_\eta F,\label{26}
\end{align}
where $\tilde \gamma_j(|z_j|^2)$ is defined by $(1+\gamma_j(|z_j|^2))^{-1}=1+\tilde \gamma_j(|z_j|^2)$.
In the following, we set $E^D(z,\eta   ):=(\mathcal Y^D)^*E(z,\eta   )$.

\begin{proposition}[Energy expansion in the Darboux coordinate]\label{prop:expansionD}
There exist $a_D>0$ and $\delta>0$ s.t.\ 
for all $k\geq 1$, $0\leq l\leq k$ and $j=0,1,2$, there exist $C_{k,j,l}^D\in C^\omega(B_{\R}(\delta^2)  ;\C)$ and for all $k\geq 0$, $0\leq l \leq k$ and $j=0,1$, there exist $G_{k,j,l}^D\in C^\omega(B_{\R}(\delta^2)  ;l_e^{a_D})$ s.t.
\begin{align*}
E^D(z,\eta   )=&E_1(|z_1|^2   )+E_2(|z_2|^2   )+E^0(0,\eta   )\\&+\sum_{k\geq 2}\sum_{j=0,1,2} \sum_{0\leq l\leq k}C_{k,j,l}^D(|z_j|^2 )Z^{l}\bar Z^{k-l} + \sum_{k\geq 1}\sum_{j=1,2}\sum_{0\leq l\leq k}\<Z^l\bar Z^{k-l}z_jG_{k,j,l}^D(|z_j|^2   ),\eta\>\\&+ R_D(z,\eta   ),
\end{align*}
where $R_D\in \mathcal R_\R(a_D,\delta)$ (recall Definition \ref{def:remainder}) with $|R(z,\eta   )|\lesssim |z|(|z|+\|\eta\|_{l_e^{-{a_D}}})^5\|\eta\|_{l_e^{-a_D}}^2$
Further, since $E$ is real valued, we have $\overline{C_{k,j,l}}^D=C_{k,j,k-l}^D$.
\end{proposition}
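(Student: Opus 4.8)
The plan is to deduce Proposition \ref{prop:expansionD} directly from Proposition \ref{prop:expansion1} by pulling back the energy expansion through the Darboux diffeomorphism $\mathcal Y^D$ of Proposition \ref{prop:dar}. Since $E^D = (\mathcal Y^D)^* E^0$ and $\mathcal Y^D = \mathrm{Id}+\tilde{\mathcal Y}^D$ with $\tilde{\mathcal Y}^D\in\mathcal R_{l_e^a}(a,\delta)$ satisfying the quantitative bound \eqref{24}, the first step is to substitute $z\mapsto z^D(z,\eta)$, $\eta\mapsto\eta^D(z,\eta)$ into the expansion \eqref{14}. The key structural observation is that $\mathcal Y^D$ is a small, $U(1)$-equivariant perturbation of the identity: because both $\Omega$ and $\Omega_0$ are invariant under the gauge action $(z_1,z_2,\eta)\mapsto(e^{\im\theta}z_1,e^{\im\theta}z_2,e^{\im\theta}\eta)$ (this follows from the construction of the Darboux map, which uses the Moser deformation argument applied to gauge-invariant objects), the change of coordinates respects the monomial structure indexed by $Z=z_1\bar z_2$ in \eqref{def:Z}. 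Hence substituting and re-expanding in powers of $z_1,z_2,\bar z_1,\bar z_2,\eta$ produces exactly the same type of terms: polynomials $C^D_{k,j,l}(|z_j|^2)Z^l\bar Z^{k-l}$ and first-order-in-$\eta$ terms $\langle Z^l\bar Z^{k-l}z_j G^D_{k,j,l}(|z_j|^2),\eta\rangle$, with analytic coefficients valued in $\R$ (resp.\ $l_e^{a_D}$ for some $a_D\le a$, the loss coming from the exponential-weight loss in \eqref{24} combined with the $(2n+1)$-type loss already present in Proposition \ref{prop:expansion1}).

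The second step is to verify that the low-order terms are unchanged, i.e.\ that the pullback only modifies coefficients at order $k\ge 2$ in the $z$-only part and $k\ge 1$ in the $\eta$-linear part, which is why the summation in Proposition \ref{prop:expansionD} starts at $k\ge 2$ rather than $k\ge 1$. Here one uses that $z^D-z$ and $\eta^D-\eta$ are of order $|z|^6\|\eta\|_{l_e^{-a}}+|z|^5|z_1z_2|$ by \eqref{24}: feeding this into $E_1(|z_1|^2)+E_2(|z_2|^2)+E^0(0,\eta)$ and into the $k=1$ terms $C_{1,j,l}(|z_j|^2)Z^l\bar Z^{1-l}$ (which already have $C_{1,j,l}(0)=0$, hence are $O(|z|^2|z_1z_2|)$) and the $k=0$ term $\langle z_j G_{0,j,0}(|z_j|^2),\eta\rangle$, one checks that the corrections either reproduce the diagonal terms $E_j$, $E^0(0,\eta)$ exactly (the diagonal parts of $\mathcal Y^D$ that only rescale $z_j$ are absorbed into the analytic coefficients $\tilde\gamma_j$), or land in the remainder class $\mathcal R_\R(a_D,\delta)$ with the stated bound $|R_D|\lesssim |z|(|z|+\|\eta\|_{l_e^{-a_D}})^5\|\eta\|_{l_e^{-a_D}}^2$. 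The crucial point for the $\langle z_jG^D,\eta\rangle$ terms is that any term quadratic or higher in $\eta$ produced by the substitution is swept into $R_D$, so that genuinely only the $\eta$-linear part needs tracking; and the $z$-only part of the substitution in a $k=1$ term can at worst produce a $k\ge 2$ term or a remainder, since each factor of $z^D-z$ carries at least $|z|^5|z_1z_2|=|z|^5|Z|$.

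The last step is bookkeeping: confirming $\overline{C^D_{k,j,l}}=C^D_{k,j,k-l}$, which is immediate since $E$ is real-valued and $\mathcal Y^D$ is a real diffeomorphism, so $E^D$ is real-valued and its expansion coefficients satisfy the same reality constraint as in Proposition \ref{prop:expansion1}; and tracking the weight $a_D$, which one takes to be any positive number smaller than $a$ (say $a_D=a/(2M+1)$ again, or simply $a_D<a$ fixed) so that all the exponential-weight manipulations — moving $e^{\pm a|\cdot|}$ factors across the pairings as in the proof of Proposition \ref{prop:expansion1} — go through with room to spare, using that $\psi\in\mathcal R_{l_e^{a_0}}(a_0,\delta)$ and $\tilde{\mathcal Y}^D\in\mathcal R_{l_e^a}(a,\delta)$ map into genuinely exponentially localized spaces. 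I expect the main obstacle to be the second step: organizing the re-expansion so that one is certain no term of order $k=1$ (in the $z$-only sum) or $k=0$ (in the $\eta$-linear sum) survives with a nonzero coefficient after the substitution — i.e.\ verifying that the gauge-equivariance of $\mathcal Y^D$ together with the vanishing $C_{1,j,l}(0)=0$ really does force the leading cross-terms to start one order higher, rather than merely being small. This is essentially the assertion that the Darboux change of variables does not reintroduce a resonant-looking low-order interaction, and it should follow by carefully matching homogeneous degrees on both sides, but it is the place where the argument must be written out with care rather than quoted.
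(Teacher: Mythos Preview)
Your overall strategy — pull back the expansion of Proposition~\ref{prop:expansion1} through $\mathcal Y^D$ and re-expand — matches the paper's, and the routine parts (that the expansion retains the same monomial structure, the reality constraint $\overline{C^D_{k,j,l}}=C^D_{k,j,k-l}$, and the remainder bound) are handled essentially as you indicate. The paper even identifies the same ``only nontrivial part'': showing that the sums start at $k\ge 2$ (for the $z$-only terms) and $k\ge 1$ (for the $\eta$-linear terms), i.e.\ that $C^D_{1,j,l}\equiv 0$ and $G^D_{0,j,0}\equiv 0$.

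The gap is precisely where you flagged it, and your proposed cure — gauge-equivariance together with degree matching — does not close it. Gauge-equivariance only tells you the expansion is organized in monomials $Z^l\bar Z^{k-l}$ with coefficients depending on $|z_j|^2$; it says nothing about which of those coefficients vanish. And degree counting cannot force $C^D_{1,j,l}\equiv 0$: the original $C_{1,j,l}(|z_j|^2)$ is only known to satisfy $C_{1,j,l}(0)=0$, so it contributes a term of the form $c\,|z_j|^2 Z^l\bar Z^{1-l}+\cdots$, which is a genuine $k=1$ term (not $k\ge 2$, and not in the remainder since it carries no factor of $\|\eta\|^2$). Because $z^D-z=O(|z|^5|z_1z_2|+|z|^6\|\eta\|)$, the substitution is far too weak a perturbation to kill such a term by smallness; any vanishing must come from an exact cancellation, and nothing in your sketch produces one.

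The paper's argument is different and dynamical. It uses that, by \eqref{24}, points of the form $(z_1,0,0)$ and $(0,z_2,0)$ are \emph{fixed} by $\mathcal Y^D$, hence are still nonlinear-bound-state initial data in the Darboux coordinate. Since after Darboux the symplectic form is $\Omega_0$, the equations of motion are the clean ones \eqref{25}--\eqref{26}. Evaluating $\im\dot\eta=\nabla_\eta E^D$ at $(z_1,0,0)$ (where the bound-state orbit forces $\dot\eta=0$) gives $z_1 G^D_{0,1,0}(|z_1|^2)=0$, hence $G^D_{0,1,0}\equiv 0$; likewise for $G^D_{0,2,0}$. Evaluating $\im\dot z_1=2(1+\tilde\gamma_1)\partial_{\bar z_1}E^D$ at $(0,z_2,0)$ (where $\dot z_1=0$) gives $C^D_{1,2,0}(|z_2|^2)z_2=0$, hence $C^D_{1,2,0}\equiv 0$, and then $C^D_{1,2,1}=\overline{C^D_{1,2,0}}=0$; similarly for $C^D_{1,1,l}$. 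The point is that the Darboux step is exactly what makes the Hamiltonian equations diagonal, so that invariance of the bound-state orbits translates directly into vanishing of these coefficients — a mechanism invisible to pure degree counting.
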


\begin{proof}
First, notice that we can extend $\mathcal Y^D$ in $\mathcal R_{l_e^{-a}}(a,\delta)$ for $a>0$ given in Proposition \ref{prop:dar}.
Next, by the proof of proposition \ref{prop:expansion1}, we have $E^0(z,\eta   )-E^0(0,\eta   )\in \mathcal R_\R(a,\delta)$.
Therefore, we see that $E^0(z^D,\eta^D   )-E^0(0,\eta^D   )$ has the expansion similar as \eqref{14} without the term $E^0(0,\eta   )$.
Further, since $\eta^D=\eta+\tilde \eta^D$ with $\tilde \eta^D\in \mathcal R_{l_{e,c}^{a}}(a,\delta)$, the same argument as in the proof of Proposition \ref{prop:expansion1} holds for the expansion of $E(0,\eta+\tilde \eta^D   )$ and as a conclusion we have the same expansion as \eqref{14}.

The only nontrivial part is the absence of terms
\begin{align*}
\sum_{j=0,1,2} \sum_{0\leq l\leq 1}C_{1,j,l}^D(|z_j|^2   )Z^{l}\bar Z^{1-l} + \sum_{j=1,2}\<z_jG_{0,j,0}^D(|z_j|^2   ),\eta\>.
\end{align*}
We first show $G_{0,1,0}^D=0$.
Starting from initial data $(z_1(0),z_2(0),\eta(0))=(z_1(0),0,0)$, by \eqref{26}, we have
\begin{align*}
\left.\im \eta_t\right|_{t=0}=z_1(0)G_{0,1,0}^D(|z_1(0)|^2   ).
\end{align*}
On the other hand, because of \eqref{24}, $(z_1(0),0,0)$ is invariant under the transformation $\mathcal Y^D$.
Thus, as in the original coordinate, $(z_1(0),0,0)$ is the initial data of nonlinear bound state.
Consequently, we have $\im \eta_t=0$.
Therefore, $G_{0,1,0}^D=0$.
Similarly, we have $G_{0,2,0}^D=0$.

We next show $C_{1,j,l}^D=0$ for all $l=0,1,\ j=0,1,2$.
First, notice that by Proposition \ref{prop:expansion1} and \eqref{24}, $C_{1,j,l}^D(0   )=C_{1,j,l}(0   )=0$ for $l=0,1,\ j=0,1,2$.
Now, starting from initial data $(0,z_2(0),0)$, we have
\begin{align*}
0=\left.\dot z_1\right|_{t=0}=-2\im C^D_{1,2,0}(|z_2(0)|^2   )z_2(0).
\end{align*}
Therefore, we have $C^D_{1,2,0}=0$.
Further, by the relation $C^D_{k,k-l,j}=\overline{C^D_{k,l,j}}$, we have $C^D_{1,2,1}=0$.

Similarly, we have $C^D_{1,1,0}=C^D_{1,1,1}=0$.
\end{proof}

\subsection{Birkhoff normal form}

We now go in to the Birkhoff normal form argument, which make us able to erase ``nonresonant" terms in the expansion of the energy.
We set

\begin{align}
R(k)&:=\begin{cases} \emptyset,& \mathrm{if}\ k:\mathrm{odd},\\
\{k/2\},\ & \mathrm{if}\ k:\mathrm{even},
\end{cases} \label{28}\\
R(j,k)&:=\{0\leq l \leq k\ |\ e_j + (e_2-e_1)(k-2l)\in (0,4)\}.\label{29}
\end{align}

\begin{proposition}\label{prop:birk}
For arbitrary $M \geq 2$, there exist $a_M, \delta_M>0$ s.t. there exist $\tilde{\mathcal Y}^M\in \mathcal R_{l_e^{a_M}}(a_M, \delta_M)$ s.t. $\mathcal Y^M:=\mathrm{Id}+\tilde{\mathcal Y}^M$ is a canonical change of coordinate (i.e.\ $\(\mathcal Y^M\)^* \Omega_0=\Omega_0$) satisfying
\begin{align}\label{30}
|z^M - z| + \|\eta^M-\eta\|_{l_e^{a_M}} \lesssim |z|^5 \(\|\eta\|_{l_e^{-a_M}}+|z_1z_2|\),
\end{align}
where $z^M=\(\mathcal Y^M\)^* z^D$ and $\eta^M=\(\mathcal Y^M\)^*\eta^D$.
Further, $E^D\circ \mathcal Y^M=E^M(z,\eta   )$ can be expanded as
\begin{align}\label{31}
&E^M(z,\eta   )=E_1(|z_1|^2   )+E_2(|z_2|^2   )+E^0(0,\eta   )\\&
+\sum_{M\geq k\geq 2} \sum_{j=0,1,2}\sum_{l\in  R(k)}C_{k,j,l}^M(|z_j|^2) Z^{l}\bar Z^{k-l}+\sum_{M-1\geq k\geq 1}\sum_{j=1,2}\sum_{l\in R(j,l)}\<Z^{l}\bar Z^{k-l}z_jG_{k,j,l}^M(|z_j|^2), \eta\>+\mathcal R_{M},\nonumber
\end{align}
where $C_{k,j,l}^M\in C^\omega(B_{\R}(\delta^2_M) ;\C)$, $G_{k,j,l}^M\in C^\omega(B_{\R}(\delta^2_M) ;l_e^{a_M})$, $\mathcal R_{M} \in \mathcal R_\R(a_M,\delta_M)$ and $$|\mathcal R_{M}| \lesssim |z_1z_2|^{M+1}+|z|\(|z|+\|\eta\|_{l_e^{-a}}\)\|\eta\|_{l_e^{-a_M}}^2.$$
Further, since $E$ is real valued, we have $\overline{C_{k,j,l}}^M=C_{k,j,k-l}^M$.
\end{proposition}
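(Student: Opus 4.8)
The plan is to prove Proposition \ref{prop:birk} by induction on $M$, following the standard Birkhoff normal form scheme adapted to the Hamiltonian PDE setting as in \cite{Cuccagna11CMP,CuMaAPDE}. The base case $M=2$ is essentially the content of Proposition \ref{prop:expansionD}: the Darboux coordinate already removes the first-order-in-$\eta$ term of order $0$ in $z$ and the quadratic cross-terms $C_{1,j,l}^D$, so the expansion of $E^D$ has exactly the claimed shape with $M=2$ (here $R(2)=\{1\}$ and $R(j,1)$ may or may not be empty depending on whether $e_j\pm(e_2-e_1)\in(0,4)$; recall Remark \ref{rem:0} forces $e_1<e_2<0$, so the only possibly-resonant indices are those producing $\omega_n\in(0,4)$, i.e.\ the ones detected by \eqref{2.1}). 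The remainder estimate at this stage is read off from Proposition \ref{prop:expansionD}.

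For the inductive step, assume $E^D$ has been put in the form \eqref{31} up to order $M-1$. The terms obstructing the passage to order $M$ are the nonresonant monomials of the two types $C_{k,j,l}^M(|z_j|^2)Z^l\bar Z^{k-l}$ with $k\le M$ and $l\notin R(k)$ (equivalently $2l-k\ne 0$, so the monomial carries a nonzero frequency $(e_2-e_1)(k-2l)\ne 0$), and $\langle Z^l\bar Z^{k-l}z_j G_{k,j,l}^M(|z_j|^2),\eta\rangle$ with $k\le M-1$ and $l\notin R(j,k)$ (so $e_j+(e_2-e_1)(k-2l)\notin(0,4)$, meaning $H-(e_j+(e_2-e_1)(k-2l))$ is boundedly invertible on $l_e^{\pm a}$ since this quantity avoids the essential spectrum $[0,4]$). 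I would kill these by a time-one flow of an auxiliary Hamiltonian $\chi$ which is a finite sum of monomials of exactly the matching types: for the $z$-only monomials one solves the cohomological equation by dividing by the (nonzero) frequency; for the $\eta$-linear monomials one replaces the coefficient $G$ by $(H-\omega)^{-1}G$ where $\omega=e_j+(e_2-e_1)(k-2l)\notin[0,4]$, using that such $G\in l_e^a$ and the resolvent maps $l_e^a\to l_e^{a'}$ for a slightly smaller exponent $a'>0$ (this is where the exponential-weight spaces and the shrinking sequence $a_M$ enter — each normal form step costs a bit of weight). The canonical change of coordinates is then $\mathcal Y^M=\phi^1_{X_\chi}\circ(\text{previous steps})$, canonical by construction because it is a Hamiltonian flow with respect to $\Omega_0$; the near-identity estimate \eqref{30} follows from $\|\chi\|\lesssim|z|^{\text{(something $\ge 5$)}}(\cdots)$ and Gronwall along the flow. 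Conjugating $E^D$ by this flow and using the Lie series $E^D\circ\phi^1_{X_\chi}=E^D+\{E^D,\chi\}+\tfrac12\{\{E^D,\chi\},\chi\}+\dots$, the bracket $\{E_1+E_2+E^0(0,\cdot),\chi\}$ cancels the targeted monomials by the choice of $\chi$, while all newly generated terms are either of higher order (absorbed into $\mathcal R_M$, using that the Poisson bracket raises the total degree) or again of the two allowed shapes at orders $\le M$; reindexing gives \eqref{31}.

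The remainder bookkeeping is the routine-but-delicate part: one must check that every term produced along the way is either (i) a polynomial monomial in $z$ alone of degree $\ge 2(M+1)$ in the "$Z$-counting", hence bounded by $|z_1z_2|^{M+1}$, (ii) at least quadratic in $\eta$ with a coefficient that is smooth and small, contributing to the $|z|(|z|+\|\eta\|)\|\eta\|^2$ part, or (iii) a controlled lower-order term already of the allowed form. The key algebraic input is gauge/phase covariance: $E$ is invariant under $u\mapsto e^{i\theta}u$, which in the $(z_1,z_2,\eta)$ variables translates into $(z_1,z_2,\eta)\mapsto(e^{i\theta}z_1,e^{i\theta}z_2,e^{i\theta}\eta)$, forcing every monomial in the expansion to be "phase-balanced" — this is exactly what guarantees that $z$-only monomials depend on $z_j$ only through $|z_j|^2$ and $Z=z_1\bar z_2$, and that $\eta$-linear monomials carry precisely one extra $z_j$ factor. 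Combined with reality of $E$ (giving $\overline{C_{k,j,l}^M}=C_{k,j,k-l}^M$), this pins down the structure.

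The main obstacle I anticipate is not any single estimate but the uniform control of the exponential weights through the induction: each Birkhoff step requires applying a resolvent $(H-\omega)^{-1}$ and multiplying by weights $e^{\pm a|n|}$, and one must verify that for each fixed target order $M$ there is a genuinely positive $a_M$ (and radius $\delta_M$) for which all finitely many cohomological equations are solvable in the relevant weighted spaces and all the bounds close — i.e.\ that the sequence $a_M$ can be chosen decreasing but staying positive, with the resolvents $R_H^\pm$ or $(H-\omega)^{-1}$ bounded $l_e^{a_M}\to l_e^{a_{M}'}$ uniformly. This rests on the spectral hypothesis that $0,4$ are neither eigenvalues nor resonances and that the relevant $\omega=e_j+(e_2-e_1)(k-2l)$ for nonresonant indices lie strictly outside $[0,4]$, which is guaranteed by $\omega_n\ne0,4$ for all $n$ together with \eqref{2.1} isolating the single resonant index $N_0$. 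I would isolate these weighted resolvent bounds as a preliminary lemma (they are proved in \cite{MDNLS1,CuMaAPDE} and via the analyticity of the distorted Fourier transform, cf.\ \cite{Cuccagna09JMAA}) and then the induction becomes bookkeeping. The detailed proof is deferred to section \ref{sec:proofbirk}.
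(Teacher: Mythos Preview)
Your proposal is correct and follows the same inductive Birkhoff normal form scheme as the paper (section \ref{sec:proofbirk}): construct an auxiliary Hamiltonian $\chi$, use its time-one Hamiltonian flow as the canonical transformation, and solve the cohomological equations by dividing by the nonzero frequency (for the $z$-only monomials) or inverting $H-\omega$ with $\omega\notin[0,4]$ (for the $\eta$-linear monomials).

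One technical point you gloss over, which the paper handles explicitly: because the coefficients $C^{M-1}_{\mathbf m,j}$ and $G^{M-1}_{\mathbf m,j}$ are kept as \emph{functions} of $|z_j|^2$ rather than expanded into individual monomials, the cohomological equations are not the uncoupled algebraic relations you describe but a coupled system with feedback terms (the flow perturbs $|z_j|^2$, which in turn perturbs the coefficients). The paper (Proposition \ref{prop:Mth}, the display labelled \eqref{nonres}) solves this via the implicit function theorem, using that the feedback terms $T_{\mathbf m,j},\mathcal T_{\mathbf m,j}$ vanish at $|z_j|^2=0$ and depend linearly on the unknowns. Your monomial-by-monomial Lie-series picture would also work in principle but would require handling infinitely many homological equations at each inductive step; the paper's bundling is what keeps each step finite-dimensional. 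Also, a small slip: Proposition \ref{prop:expansionD} furnishes the inductive \emph{hypothesis} at $M=2$ (i.e.\ the required form of $E^{M-1}=E^1=E^D$), not the conclusion --- a genuine normal form step is still needed to reach the expansion \eqref{31} with $M=2$, since $E^D$ still contains the nonresonant $k=2$ scalar terms and $k=1$ $\eta$-linear terms.
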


\begin{remark}
By the definition of $R(k)$, we have
\begin{align*}
\sum_{M\geq k\geq 2} \sum_{j=0,1,2}\sum_{l\in  R(k)}C_{k,j,l}^M(|z_j|^2) Z^{l}\bar Z^{k-l}=\sum_{M\geq 2k'\geq 2} \sum_{j=0,1,2}C_{2k',j,k'}^M(|z_j|^2) |z_1|^{2k'}|z_2|^{2k'}.
\end{align*}
\end{remark}

We will give the proof of Proposition \ref{prop:birk} in section \ref{sec:proofbirk}.

\section{Dispersion}\label{sec:dispersion}

\subsection{Linear estimates}
We will now introduce linear estimates of $e^{-\im t H}$.
Lemmas \ref{lem:l0}--\ref{lem:l3} can be found in \cite{CT09SIAM}.
See also \cite{PS08JMP} and \cite{KPS09SIAM}.
Further, for the recent refinement, see \cite{EKT15JST}. 

In the following we always assume $H$ is generic in the sense of Lemma 5.3 of \cite{CT09SIAM}.
We will prove lemma \ref{lem:l4} in section \ref{sec:proofl4}.

\begin{definition}
For an interval $I\subset \R$, we set
\begin{align*}
\stz(I):=L^6(I;l^\infty)\cap L^\infty(I;l^2),\quad \stz^*(I):=L^{6/5}(I;l^1)+L^1(I;l^2),
\end{align*}
where
\begin{align*}
\|u\|_{L^p l^q(I)}:=\| \|u\|_{l^q} \|_{L^p}=\(\int_I \(\sum_{n\in \Z}|u(t,n)|^q\)^{p/q}\,dt\)^{1/p},
\end{align*}
and
\begin{align*}
\|u\|_{X\cap Y}:=\max\(\|u\|_X, \|u\|_{Y}\),\quad \|u\|_{X+Y}:=\inf_{\substack{u_1+u_2=u\\ u_1\in X, u_2\in Y}}\(\|u_1\|_X+\|u_2\|_Y\).
\end{align*}

\end{definition}

\begin{lemma}[Dispersive estimate]\label{lem:l0}
We have
\begin{align*}
\|e^{-\im t H}P_cf\|_{l^\infty}\lesssim \<t\>^{-1/3}\|f\|_{l^1}.
\end{align*}
\end{lemma}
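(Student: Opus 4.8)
The plan is to reduce the claim to the known dispersive decay for the free discrete Schr\"odinger group $e^{\im t\Delta}$ together with the spectral theory of $H=-\Delta+V$, and then to invoke the standard perturbation machinery (Duhamel + Born series + the limiting absorption estimate for $R_H^\pm$) exactly as in \cite{PS08JMP} and \cite{CT09SIAM}. First I would recall the classical oscillatory-integral fact that, writing $e^{\im t\Delta}f(n) = \sum_m K_t(n-m) f(m)$ with $K_t(n) = \frac{1}{2\pi}\int_{-\pi}^{\pi} e^{\im t(2\cos\theta - 2)} e^{\im n\theta}\, d\theta$, one has $\sup_n |K_t(n)| \lesssim \langle t\rangle^{-1/3}$. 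The $\langle t\rangle^{-1/3}$ (rather than $\langle t\rangle^{-1/2}$) comes from the degenerate critical points of the phase $2\cos\theta$ at $\theta = 0, \pi$, where the second derivative vanishes and one controls the integral by van der Corput with a third-derivative bound; for $|t|\le 1$ the bound is trivial. This gives $\|e^{\im t\Delta}f\|_{l^\infty}\lesssim \langle t\rangle^{-1/3}\|f\|_{l^1}$, hence the $l^1\to l^\infty$ bound for the free evolution.

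Next I would pass from $-\Delta$ to $H$. Using the spectral representation of $H$ via its distorted Fourier transform (available since $0,4$ are assumed to be neither resonances nor eigenvalues, so the generic hypothesis of \cite{CT09SIAM} holds), one writes
\begin{align*}
e^{-\im tH}P_c f = \frac{1}{2\pi\im}\int_0^4 e^{-\im t\lambda}\bigl(R_H^+(\lambda) - R_H^-(\lambda)\bigr) f\, d\lambda,
\end{align*}
and one must show this kernel decays like $\langle t\rangle^{-1/3}$ in the $l^1\to l^\infty$ sense, uniformly. The main tool is the limiting absorption principle for $H$: the boundary values $R_H^\pm(\lambda)$ exist as bounded operators $l^{2,\sigma}\to l^{2,-\sigma}$ for $\sigma>1$ (this is exactly the input cited before Lemma 3.2 of \cite{CT09SIAM}), they are H\"older continuous in $\lambda$ on $(0,4)$, and near the band edges $0,4$ one has the non-degeneracy coming from the absence of resonances/eigenvalues there. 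Combining this with the resolvent identity $R_H^\pm = R_0^\pm - R_0^\pm V R_H^\pm$ (equivalently the finite/convergent Born series, using $\sum_n (1+|n|)|V(n)|<\infty$ to control the weighted norms), one reduces the stationary-phase analysis of the $H$-kernel to that of the free kernel plus manifestly better-decaying correction terms; the band-edge contributions again produce the critical $\langle t\rangle^{-1/3}$ rate and the interior is better. This is precisely the argument carried out in \cite{PS08JMP}; I would cite it rather than reproduce it.

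The step I expect to be the genuine obstacle is the uniform control of the resolvent expansion near the band edges $\lambda = 0$ and $\lambda = 4$: there $R_0^\pm(\lambda)$ is singular (its kernel behaves like $\lambda^{-1/2}$ type terms because of the degenerate dispersion relation), and one needs the hypotheses ``$0,4$ are not resonances nor eigenvalues'' together with the weighted summability of $V$ to show that $(\mathrm{Id} + R_0^\pm(\lambda)V)$ is boundedly invertible up to and including the edges, so that $R_H^\pm(\lambda)$ inherits just enough regularity for the $\langle t\rangle^{-1/3}$ stationary-phase estimate to survive the perturbation. Everything else — the van der Corput estimate for the free kernel, the Neumann series, the interior H\"older continuity — is routine. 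Since all of this is established in \cite{PS08JMP} and recalled in \cite{CT09SIAM}, the proof in the paper should consist of little more than the statement ``see \cite{CT09SIAM, PS08JMP}'', and I would present it that way, with the sketch above as the underlying justification.
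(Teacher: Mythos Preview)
Your proposal is correct and matches the paper exactly: the paper gives no proof of Lemma~\ref{lem:l0} at all, stating only that Lemmas~\ref{lem:l0}--\ref{lem:l3} ``can be found in \cite{CT09SIAM}'' with additional references to \cite{PS08JMP} and \cite{KPS09SIAM}. Your anticipation that the proof would amount to a citation was spot on, and the sketch you supply (van der Corput for the free kernel, then Born series/resolvent identity with the band-edge regularity coming from the non-resonance assumption) is indeed the argument carried out in those references.
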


\begin{lemma}[Strichartz estimate]\label{lem:l1}
Let $I\subset \R$ be an interval.
Then, we have
\begin{align*}
\|e^{-\im t H}P_c f\|_{\stz(I)}\lesssim \|f\|_{l^2},
\end{align*}
and
\begin{align*}
\left\|\int_0^t e^{-\im (t-s)H}P_cg(s)\,ds\right\|_{\stz(I)}\lesssim \|g\|_{\stz^*(I)}.
\end{align*}
\end{lemma}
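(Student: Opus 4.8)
The plan is to prove the Strichartz estimate, Lemma \ref{lem:l1}, as a consequence of the dispersive estimate in Lemma \ref{lem:l0} together with the $L^2$-conservation property $\|e^{-\im tH}P_cf\|_{l^2}\lesssim\|f\|_{l^2}$ (which follows from self-adjointness of $H$ and boundedness of $P_c$ on $l^2$). First I would record the two endpoint bounds: for the free evolution on $l^2(\Z)$ we have the ``energy'' bound $\|e^{-\im tH}P_cf\|_{l^\infty_tl^2_x}\lesssim\|f\|_{l^2}$ and the ``dispersive'' bound $\|e^{-\im tH}P_cf\|_{l^\infty_x}\lesssim\<t\>^{-1/3}\|f\|_{l^1}$. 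Interpolating the decay estimate with the $l^2\to l^2$ bound gives the family $\|e^{-\im tH}P_cf\|_{l^p}\lesssim\<t-s\>^{-\frac13(1-\frac2p)}\|f\|_{l^{p'}}$ for $2\le p\le\infty$, which is the standard input for the abstract Keel--Tao machinery. The one subtlety compared to the Euclidean case is that the decay rate here is $1/3$ rather than $d/2$; one checks that the pair $(6,\infty)$ (i.e.\ $L^6_tl^\infty_x$) is admissible for a decay exponent $1/3$ because $\frac{2}{6}=\frac13\cdot(1-\frac{2}{\infty})$, so it sits exactly at the endpoint, which is admissible by the Keel--Tao endpoint theorem. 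Using the $\<t\>^{-1/3}$ rather than $|t|^{-1/3}$ form only makes the time integrals easier to control, so no local singularity issue arises.

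The main steps, in order, would be: (i) state the dual formulation — the homogeneous estimate $\|e^{-\im tH}P_cf\|_{\stz(I)}\lesssim\|f\|_{l^2}$ is equivalent by $TT^*$ duality to the claim that the operator $g\mapsto\int_{\R}e^{-\im sH}P_cg(s)\,ds$ maps $\stz^*(\R)\to l^2$, and this in turn is equivalent to the bilinear bound $\left|\int\int\<e^{-\im(t-s)H}P_cg(s),h(t)\>\,ds\,dt\right|\lesssim\|g\|_{\stz^*}\|h\|_{\stz^*}$; (ii) split the double integral into the diagonal region $|t-s|\le1$ and the off-diagonal region $|t-s|>1$; on the diagonal piece use only the $l^2\to l^2$ bound together with Hölder in time, and on the off-diagonal piece use the $\<t-s\>^{-1/3}$ decay in the $l^1\to l^\infty$ estimate, reducing matters to boundedness of a fractional-integration-type kernel on the relevant Lorentz/Lebesgue spaces in the time variable; (iii) since the endpoint pair $(6,\infty)$ is genuinely an endpoint, invoke the Christ--Kiselev lemma (or directly the Keel--Tao endpoint argument) to pass from the global-in-time bound to the retarded/truncated estimate $\left\|\int_0^te^{-\im(t-s)H}P_cg(s)\,ds\right\|_{\stz(I)}\lesssim\|g\|_{\stz^*(I)}$, being slightly careful since Christ--Kiselev fails exactly at an endpoint and one must instead use the refined endpoint Strichartz argument that handles the retarded operator directly. (iv) Finally, restrict to an arbitrary interval $I\subset\R$ by extension/restriction, which is immediate since all the estimates are already global and the constants do not depend on $I$.

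The step I expect to be the main obstacle is (iii): the pair $(6,\infty)$ is an endpoint (it saturates the admissibility condition with the off-diagonal decay $1/3$), so one cannot simply quote Christ--Kiselev to get the inhomogeneous/retarded estimate from the homogeneous one — Christ--Kiselev requires a strictly subadmissible exponent. The way around this is the standard one from Keel--Tao: prove the retarded estimate directly at the endpoint by a dyadic decomposition of $|t-s|$ combined with a bilinear real-interpolation argument (the ``atom'' decomposition of Lorentz spaces), which gives the endpoint inhomogeneous bound without passing through the homogeneous one. Since in our setting the underlying spatial space is just $l^p(\Z)$ and the only analytic input is the clean decay estimate of Lemma \ref{lem:l0}, this is entirely parallel to the continuous case and the argument of \cite{CT09SIAM} applies verbatim; I would simply cite that this abstract machinery applies, noting that the admissible pair here is $(q,r)=(6,\infty)$ dictated by the one-dimensional discrete decay rate $1/3$.

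Alternatively — and this is probably how I would actually present it to keep the paper self-contained — I would not reprove anything at all, but observe that Lemmas \ref{lem:l0} and the $l^2$-boundedness of $e^{-\im tH}P_c$ are precisely the hypotheses of the abstract Strichartz theorem (Keel--Tao), with decay exponent $1/3$ and the admissible endpoint pair $(6,\infty)$; hence both the homogeneous and the retarded inhomogeneous estimates follow, and for the retarded version one invokes the endpoint-admissible case of that theorem. This is exactly the route taken in \cite{CT09SIAM} (and \cite{KPS09SIAM}, \cite{PS08JMP}), so the proof reduces to verifying the hypotheses and citing the black box.
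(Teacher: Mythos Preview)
Your proposal is correct and ultimately lands on the same approach as the paper: the paper does not prove this lemma at all but simply records that Lemmas \ref{lem:l0}--\ref{lem:l3} ``can be found in \cite{CT09SIAM}'' (see also \cite{PS08JMP}, \cite{KPS09SIAM}), which is exactly your final paragraph.

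One small correction to your discussion of step (iii): the pair $(q,r)=(6,\infty)$ is \emph{not} the Keel--Tao endpoint. The endpoint in the Keel--Tao sense is $q=2$, which for decay exponent $\sigma=1/3$ would require $\frac{1}{2}-\frac{1}{r}=\frac{3}{2}$ and hence does not occur; every admissible pair here has $q\ge 6>2$. Consequently the Christ--Kiselev lemma applies directly to pass from the non-retarded to the retarded inhomogeneous estimate, and the dyadic bilinear endpoint machinery you describe is unnecessary. Your worry about (iii) being ``the main obstacle'' is therefore unfounded --- the argument is the straightforward non-endpoint $TT^*$/Hardy--Littlewood--Sobolev one, exactly as carried out in \cite{CT09SIAM}.
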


\begin{lemma}[Kato Smoothing]\label{lem:l2}
Let $I\subset \R$ be an interval.
Let $\sigma>1$.
Then, we have
\begin{align*}
\|e^{-\im t H}P_c f\|_{L^2l^{2,-\sigma}(I)}\lesssim \|f\|_{l^2},
\end{align*}
and
\begin{align*}
\left\| \int_0^te^{-\im(t-s)H}P_c g(s)\,ds\right\|_{L^2l^{2,-\sigma}(I)}\lesssim\|g\|_{L^2l^{2,\sigma}(I)}.
\end{align*}
\end{lemma}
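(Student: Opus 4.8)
The final statement to prove is the Kato smoothing estimate (Lemma~\ref{lem:l2}): the homogeneous bound $\|e^{-\im t H}P_c f\|_{L^2 l^{2,-\sigma}(I)}\lesssim \|f\|_{l^2}$ and the inhomogeneous (retarded) bound $\|\int_0^t e^{-\im(t-s)H}P_c g(s)\,ds\|_{L^2 l^{2,-\sigma}(I)}\lesssim \|g\|_{L^2 l^{2,\sigma}(I)}$ for $\sigma>1$. The plan is to reduce both estimates to a resolvent-type limiting absorption estimate for $H$ on the continuous spectrum and then run the standard $TT^*$/Kato machinery. Concretely, the homogeneous bound is equivalent (via the usual duality argument, Plancherel in $t$, and the representation of $e^{-\im tH}P_c$ through the boundary values of the resolvent on the slit $[0,4]$) to the uniform bound $\sup_{\lambda\in(0,4)}\|\langle n\rangle^{-\sigma}\,(H-\lambda\mp\im 0)^{-1}\,\langle n\rangle^{-\sigma}\|_{\mathcal L(l^2)}<\infty$ together with enough regularity in $\lambda$ of these boundary values. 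Since $H=-\Delta+V$ with $\sum_n(1+|n|)|V(n)|<\infty$ and $0,4$ are neither resonances nor eigenvalues, this limiting absorption principle is exactly what is supplied by the spectral analysis of one-dimensional discrete Schrödinger operators; I would quote Lemma~3.2 of \cite{CT09SIAM} (already invoked in the definition of $R_+(\omega)$) for existence of the boundary values in $\mathcal L(l^{2,\sigma},l^{2,-\sigma})$, and the accompanying uniform/Hölder bounds from \cite{PS08JMP} (or \cite{EKT15JST}) for the quantitative control. In fact this lemma and its proof are essentially contained in \cite{CT09SIAM}, so the cleanest route is simply to cite it; if a self-contained argument is wanted, the steps below carry it out.

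First I would set up the functional-analytic framework. Write $e^{-\im tH}P_c f = \frac{1}{2\pi\im}\int_0^4 e^{-\im t\lambda}\big[(H-\lambda-\im 0)^{-1}-(H-\lambda+\im 0)^{-1}\big]f\,d\lambda$ using the Stone formula and the fact that $\sigma(H)\cap[0,4]$ is purely absolutely continuous (no eigenvalues in $[0,4]$ by the generic hypothesis, and the endpoints are regular). Multiplying by the weight $w:=\langle n\rangle^{-\sigma}$, this exhibits $w\,e^{-\im tH}P_c f$ as the Fourier transform (in $\lambda$, hence in $t$) of $\lambda\mapsto w\,[R_+(\lambda)-R_-(\lambda)]f$, which by the cited limiting absorption bounds lies in $L^2_\lambda((0,4);l^2)$ with norm $\lesssim\|f\|_{l^2}$. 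Plancherel in $t$ then gives the homogeneous estimate. The inhomogeneous estimate I would obtain from the homogeneous one by the standard Christ--Kiselev-type / $TT^*$ argument: the (non-retarded) operator $g\mapsto \int_{\R} e^{-\im(t-s)H}P_c g(s)\,ds$ factors as $T T^*$ where $T^*:L^2 l^{2,\sigma}\to l^2$, $g\mapsto \int e^{\im sH}P_c w^{-1}(w g(s))\,ds$ is bounded by the homogeneous estimate and duality, and then passing from the full-line integral to the retarded one $\int_0^t$ is handled by the Christ--Kiselev lemma (applicable since the relevant exponents are off-endpoint, $2\neq\infty$), or alternatively by the elementary symmetrization trick that works directly for the $L^2 L^{2,-\sigma}/L^2 L^{2,\sigma}$ pair without needing Christ--Kiselev at all.

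The main obstacle is the behavior at the spectral endpoints $0$ and $4$. There the free resolvent of $-\Delta$ on $\Z$ has the usual one-dimensional square-root singularity, and the boundary values $R_\pm(\lambda)$ of $(H-\lambda)^{-1}$ are only Hölder (not smooth) in $\lambda$ up to $0$ and $4$; one must check that, because $0$ and $4$ are assumed to be neither eigenvalues nor resonances, these singularities are weak enough (integrable against $w$) that $\lambda\mapsto w[R_+(\lambda)-R_-(\lambda)]f$ is still in $L^2_\lambda((0,4);l^2)$ uniformly in $f$. This is precisely the content of the limiting absorption principle for $H$ proved in \cite{PS08JMP,CT09SIAM}, and the required endpoint analysis (expansion of the perturbed resolvent near threshold, exclusion of resonances) is exactly where the hypotheses "$0,4$ are not resonances nor eigenvalues" are used. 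So in the write-up I would isolate this as the single quoted input, cite Lemmas in \cite{CT09SIAM} (and \cite{PS08JMP,EKT15JST}), and keep the remainder of the proof as the routine Stone-formula / Plancherel / $TT^*$ reduction sketched above.
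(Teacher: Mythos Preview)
Your proposal is correct, and in fact it goes further than the paper: the paper does not prove Lemma~\ref{lem:l2} at all but simply cites it from \cite{CT09SIAM} (with additional references to \cite{PS08JMP} and \cite{KPS09SIAM}), exactly as you anticipate when you write that ``the cleanest route is simply to cite it.'' Your outlined self-contained argument via the limiting absorption principle, Stone's formula, Plancherel, and $TT^*$/Christ--Kiselev is the standard route and is essentially how the result is obtained in those references, so there is no discrepancy to flag.
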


\begin{lemma}\label{lem:l3}
Let $I\subset \R$ be an interval.
Let $\sigma>1$.
Then, we have
\begin{align*}
\left\|\int_0^t e^{-\im(t-s)H}P_cg(s,\cdot)\,ds\right\|_{\stz (I)}\lesssim \|g\|_{L^2l^{2,\sigma}(I)}.
\end{align*}
\end{lemma}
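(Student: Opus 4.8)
\textbf{Proof proposal for Lemma \ref{lem:l3}.}

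The plan is to derive this ``mixed'' estimate by combining the Strichartz estimate of Lemma \ref{lem:l1} with the dual form of the Kato smoothing estimate of Lemma \ref{lem:l2} via a $TT^*$-type argument. First I would observe that the operator $g\mapsto \int_0^t e^{-\im(t-s)H}P_c g(s,\cdot)\,ds$ factors, up to the usual Christ--Kiselev manipulation, through the untruncated operator $g\mapsto e^{-\im tH}P_c\int_{\R} e^{\im sH}P_c g(s,\cdot)\,ds$. The second factor $g\mapsto \int_{\R}e^{\im sH}P_cg(s,\cdot)\,ds$ is, by duality, exactly the adjoint of the inhomogeneous-free map $f\mapsto e^{-\im tH}P_cf$ viewed as a map $l^2\to L^2 l^{2,-\sigma}(I)$; hence by Lemma \ref{lem:l2} (first inequality, dualized) it maps $L^2 l^{2,\sigma}(I)\to l^2$ boundedly, with norm controlled by $\|g\|_{L^2l^{2,\sigma}(I)}$. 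Then the first factor $f\mapsto e^{-\im tH}P_cf$ maps $l^2\to \stz(I)$ boundedly by the homogeneous Strichartz estimate of Lemma \ref{lem:l1}. Composing gives the desired bound for the untruncated operator.

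The remaining point is to pass from the untruncated operator (integral over all of $\R$, or over $I$) to the retarded/truncated operator $\int_0^t$. Here I would invoke the Christ--Kiselev lemma: since the target exponent pair for $\stz(I)=L^6(I;l^\infty)\cap L^\infty(I;l^2)$ involves $L^6_t$ and $L^\infty_t$ in the time variable while the source $L^2 l^{2,\sigma}(I)$ has time exponent $2$, and $6>2$ (and $\infty>2$), the Christ--Kiselev lemma applies and transfers the bound for the full-line operator to the truncated one, at the cost of an absolute constant. One must be slightly careful with the $L^\infty(I;l^2)$ endpoint component of $\stz$, but this can be handled directly: for that component one writes $\int_0^t e^{-\im(t-s)H}P_c g\,ds = e^{-\im tH}P_c\int_0^t e^{\im sH}P_c g\,ds$ and bounds the $l^2$ norm using unitarity of $e^{-\im tH}$ on $l^2$ together with the dual Kato estimate applied on the subinterval $(0,t)\cap I$ uniformly in $t$, so no Christ--Kiselev is needed there.

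I expect the main obstacle to be purely bookkeeping: making the duality pairing between $L^2 l^{2,\sigma}$ and $L^2 l^{2,-\sigma}$ precise (including the role of $\sigma>1$, which is inherited directly from Lemma \ref{lem:l2}), and checking that the Christ--Kiselev hypotheses are genuinely met for the $L^6$ component despite $\stz$ being an intersection space. None of this involves new analysis beyond Lemmas \ref{lem:l1} and \ref{lem:l2}; the estimate is a soft consequence of those two, exactly as in the continuous NLS literature (e.g.\ the analogous deduction in \cite{CT09SIAM}). If one prefers to avoid Christ--Kiselev entirely, an alternative is to first establish the estimate with $\int_0^t$ replaced by $\int_0^\infty$ (immediate from the factorization above) and then note that the retarded operator differs from a fixed-profile term of the form $e^{-\im tH}P_c h$ with $\|h\|_{l^2}\lesssim \|g\|_{L^2 l^{2,\sigma}(I)}$, which is again controlled by Lemma \ref{lem:l1}; I would present whichever of these is cleanest.
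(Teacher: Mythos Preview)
Your approach is correct and is the standard way to derive such a mixed Strichartz--smoothing estimate: dualize the homogeneous Kato smoothing bound of Lemma~\ref{lem:l2} to get $\left\|\int_I e^{\im sH}P_c g(s)\,ds\right\|_{l^2}\lesssim \|g\|_{L^2 l^{2,\sigma}(I)}$, compose with the homogeneous Strichartz estimate of Lemma~\ref{lem:l1}, and then pass to the retarded operator via Christ--Kiselev for the $L^6_t$ component (with the $L^\infty_t l^2$ component handled directly by unitarity, as you note).

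The paper, however, does not give any proof of this lemma at all: it simply states that Lemmas~\ref{lem:l0}--\ref{lem:l3} ``can be found in \cite{CT09SIAM}''. So there is no argument in the paper to compare against; your proposal supplies exactly the kind of proof that \cite{CT09SIAM} (and the continuous NLS literature) uses. One small caveat on your closing ``alternative'': the difference between $\int_0^\infty$ and $\int_0^t$ is $e^{-\im tH}P_c h_t$ with $h_t=\int_t^\infty e^{\im sH}P_c g(s)\,ds$ depending on $t$, not a fixed $h$, so that route still gives the $L^\infty_t l^2$ bound directly but does not by itself avoid Christ--Kiselev for the $L^6_t l^\infty$ component. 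This is harmless since your primary argument already handles that case.
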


\begin{remark}In \cite{CT09SIAM}, the estimates of Lemmas \ref{lem:l2} and \ref{lem:l3} are expressed in the time averaging norm $l^{2,\pm \sigma}_n L^2_t$.
However, since both time and space are $L^2$ ($l^2$) based norms, we can exchange them by Fubini.
\end{remark}

\begin{lemma}\label{lem:l4}
Let $I\subset \R$ be an interval.
Let $\sigma>7/2$.
Then, for $t\geq 0$, we have
\begin{align*}
\left\|e^{-\im tH}R_H^+(\omega_*)P_cf \right\|_{l^{2,-\sigma}}\lesssim \<t\>^{-3/2}\|f\|_{ l^{2,\sigma}}.
\end{align*}
\end{lemma}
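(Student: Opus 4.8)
The plan is to prove the weighted time-decay estimate for the "Fermi Golden Rule" resolvent $R_H^+(\omega_*)$ by reducing it to a stationary phase / oscillatory integral bound in the distorted Fourier representation, following the same philosophy as the proof of Lemma \ref{lem:l0} but keeping track of the extra factor $(H-\omega_*-\im 0)^{-1}$. The key structural observation is that $\omega_*=\omega_{N_0}\in(0,4)$ lies in the interior of the essential spectrum, and since $H$ is generic (no resonance or eigenvalue at $0,4$), the spectral measure $dE_H(\lambda)P_c$ is, after the change of variables $\lambda = 2-2\cos\xi$, represented by a family of distorted eigenfunctions $\psi(\xi,\cdot)$ depending smoothly on $\xi\in(0,\pi)$ with the usual asymptotics at the band edges $\xi=0,\pi$ where $\lambda = 0,4$.

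\smallskip

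\textbf{Step 1: Spectral representation.} Write, for $f\in l^{2,\sigma}$,
\begin{align*}
e^{-\im tH}R_H^+(\omega_*)P_cf = \frac{1}{2\pi}\int_0^\pi e^{-\im t(2-2\cos\xi)}\,\frac{1}{2-2\cos\xi-\omega_*-\im 0}\,\widehat f(\xi)\,\psi(\xi,\cdot)\, 2\sin\xi\,d\xi,
\end{align*}
using the standard diagonalization of $H$ on $P_c l^2$ (this is the same representation underlying Lemmas \ref{lem:l0}--\ref{lem:l3} and the formula \eqref{FGRexpression}; see \cite{Cuccagna09JMAA}). The denominator vanishes exactly at $\xi=\xi_{N_0}=\arccos(\tfrac12(2-\omega_*))\in(0,\pi)$. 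Split the $\xi$-integral with a smooth partition of unity into a piece supported near $\xi_{N_0}$ and a piece supported away from $\xi_{N_0}$; on the latter the symbol $1/(2-2\cos\xi-\omega_*-\im0)$ is a genuine smooth function and the piece is handled exactly as in the proof of the dispersive estimate, giving in fact $\langle t\rangle^{-1/3}$ decay, which is more than enough. The real work is the piece near $\xi_{N_0}$.

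\smallskip

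\textbf{Step 2: The resonant piece via Plemelj and stationary phase.} Near $\xi_{N_0}$ apply the Plemelj formula to split $1/(2-2\cos\xi-\omega_*-\im0)$ into a principal-value part and a delta part $\im\pi\delta(2-2\cos\xi-\omega_*)$. The delta contribution produces, after integration, a pure phase $e^{-\im t\omega_*}$ times a fixed $l^{2,-\sigma}$ function (this is precisely the term related to $\Gamma$ in \eqref{FGRexpression}); it is bounded uniformly in $t$, hence trivially $O(\langle t\rangle^{-3/2})$ is false for it — so one must be more careful: in fact the delta piece is $O(1)$, not decaying. The resolution is that we should \emph{not} decompose via Plemelj but instead keep the full boundary-value symbol and integrate by parts. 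Write the phase $\phi(\xi)=2-2\cos\xi$, whose derivative $\phi'(\xi)=2\sin\xi$ is nonvanishing on $(0,\pi)$, so there is no stationary point in the interior; then repeated non-stationary integration by parts of $\int e^{-\im t\phi(\xi)} m(\xi)\,d\xi$ with $m(\xi)=(2-2\cos\xi-\omega_*-\im0)^{-1}2\sin\xi\,\widehat f(\xi)\psi(\xi,\cdot)$ gains powers of $t^{-1}$; each integration by parts differentiates the singular symbol $m$, producing higher-order poles at $\xi_{N_0}$, but these are integrable against a test function precisely because the contour is pushed to $\Im\xi<0$ (the $-\im0$ prescription), i.e. we deform the $\xi$-contour slightly below the real axis near $\xi_{N_0}$ so the singularity is avoided and $m$ becomes genuinely smooth there. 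The band-edge endpoints $\xi=0,\pi$, where $\phi'=0$, are the stationary points responsible for the $\langle t\rangle^{-1/3}$ in Lemma \ref{lem:l0}; but our cutoff localizes away from them, so near those points we again just use the dispersive estimate structure. Combining the $\langle t\rangle^{-1/3}$ from the band-edge region with the much faster decay from the non-stationary interior region, and checking that three integrations by parts (needing three derivatives of $\widehat f$, hence $f\in l^{2,\sigma}$ with $\sigma>7/2$ to control $\|\langle\xi\rangle^3\widehat f\|$-type norms via the weighted Sobolev embedding in the Fourier variable) yields $\langle t\rangle^{-3/2}$ with the stated $l^{2,\sigma}\to l^{2,-\sigma}$ bound.

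\smallskip

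\textbf{Main obstacle.} The delicate point — and the one I expect to be the crux — is the interplay at the resonant frequency $\xi_{N_0}$ between the contour deformation forced by the $+\im0$ prescription and the integrations by parts: one must verify that after deforming the contour below the pole and differentiating the symbol $k$ times, the resulting integrand is still absolutely integrable with a bound uniform in $t$ (so that the $t^{-k}$ gain is genuine), and that the contributions from the two ends of the deformed arc cancel or are themselves $O(\langle t\rangle^{-3/2})$. The regularity threshold $\sigma>7/2$ should emerge from needing enough decay of $f$ to put $\widehat f$ in $C^2$ (or a suitable weighted Sobolev space in $\xi$) plus the half-derivative loss intrinsic to the $l^{2,\sigma}$-to-Fourier correspondence for $H$; I would track constants through the non-stationary integration by parts and the Kato-smoothing-type trace estimate at $\xi_{N_0}$ to pin down exactly why $7/2$ rather than $3$ is required, and quote Lemma \ref{lem:l2} and the generic-$H$ hypothesis (Lemma 5.3 of \cite{CT09SIAM}) for the smoothness and boundedness of $\psi(\xi,\cdot)$ in $l^{2,-\sigma}$.
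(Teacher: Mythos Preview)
Your proposal contains a genuine error in the handling of the band-edge contributions. You write that the piece supported away from $\xi_{N_0}$ ``is handled exactly as in the proof of the dispersive estimate, giving in fact $\langle t\rangle^{-1/3}$ decay, which is more than enough,'' and later that ``combining the $\langle t\rangle^{-1/3}$ from the band-edge region with the much faster decay from the non-stationary interior region \dots\ yields $\langle t\rangle^{-3/2}$.'' This is false: when you sum two pieces the slower rate dominates, so $\langle t\rangle^{-1/3}$ from the band edges would cap the total at $\langle t\rangle^{-1/3}$, not $\langle t\rangle^{-3/2}$. The dispersive bound of Lemma~\ref{lem:l0} is an $l^1\to l^\infty$ estimate and does not by itself upgrade to $\langle t\rangle^{-3/2}$ in weighted spaces.

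The paper closes this gap by invoking, for the pieces whose spectral cutoff includes the band edges but excludes $\omega_*$, the Pelinovsky--Stefanov weighted local decay estimate $\|P_c e^{-\im tH}\|_{l^{2,\sigma}\to l^{2,-\sigma}}\lesssim\langle t\rangle^{-3/2}$ for $\sigma>7/2$ (this is precisely where the threshold $7/2$ enters). On those pieces $R_H^+(\omega_*)\phi_j(H)=g(H)$ for a genuinely smooth $g$, so one simply composes the bounded functional calculus $g(H):l^{2,-\sigma}\to l^{2,-\sigma}$ with the local decay estimate. For the piece localized near $\omega_*$ (hence away from the band edges), the paper avoids contour deformation altogether via the identity
\[
e^{-\im tH}R_H^+(\omega_*)=\im\,e^{-\im\omega_* t}\lim_{\varepsilon\downarrow 0}\int_t^\infty e^{-\im s(H-\omega_*-\im\varepsilon)}\,ds,
\]
which reduces matters to $\|e^{-\im tH}\phi_2(H)P_c\|_{l^{2,\sigma}\to l^{2,-\sigma}}\lesssim t^{-5/2}$; this follows by three integrations by parts in Stone's formula using that $\frac{d^3}{d\omega^3}R_H^+(\omega)$ is bounded from $l^{2,\sigma}$ to $l^{2,-\sigma}$ uniformly on compact subsets of $(0,4)$ (a result of Komech--Kopylova--Kunze). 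Your contour-deformation idea at $\xi_{N_0}$ is not unreasonable, but the Duhamel-type identity is cleaner and sidesteps the bookkeeping you flag in your ``Main obstacle''; more importantly, you must replace the $\langle t\rangle^{-1/3}$ input at the band edges by the weighted $\langle t\rangle^{-3/2}$ local decay, or the argument cannot close.
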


\subsection{Bootstrapping}

We now solve DNLS \eqref{1}.
We use the normal form Proposition \ref{prop:birk} with $M=2N_0$.
In this case, $E^{2N_0}$ can be written as
\begin{align}\label{35}
E^{2N_0}(z,\eta   )=&E_1(|z_1|^2   )+E_2(|z_2|^2   )+E(\eta   )
+A(|z_1|^2,|z_2|^2   )\\&+\<\bar  z_1^{N_0-1}z_2^{N_0}G_{N_0-1,2,0}^{2N_0}(0   ), \eta\>+\tilde {\mathcal R}_{2N_0},\nonumber
\end{align}
where
\begin{align*}
\tilde {\mathcal R}_{2N_0}=&\<\bar  z_1^{N_0-1}z_2^{N_0}G_{N_0-1,2,0}^{2N_0}(|z_2|^2   )-G_{N_0-1,2,0}^{2N_0}(0   ), \eta\>\\&+\sum_{2N_0-1\geq k\geq N_0}\sum_{j=1,2}\sum_{l\in R(j,k)}\<Z^{l}\bar Z^{k-l}z_jG_{k,j,l}^{2N_0}(|z_j|^2   ), \eta\>+\mathcal R_{2N_0},\\
A(|z_1|^2,|z_2|   )=&\sum_{2N_0\geq k\geq 2} \sum_{j=1,2}\sum_{l\in  R(k)}C_{k,l,j}^{2N_0}(|z_j|^2   ) Z^{l}\bar Z^{k-l},
\end{align*}
and $|A(|z_1|^2,|z_2|^2   )|\lesssim |z_1|^2|z_2|^2$ and $|\tilde {\mathcal R}_{2N_0}|\lesssim |z|^2\(|z_1^{N_0-1}z_2^{N_0}|^2+\|\eta\|_{l_e^{-a_{2N_0}}}^2\)+|z|\|\eta\|_{l_e^{-a_{2N_0}}}^3$. 

\begin{remark}
Notice that by the definition of $N_0$ and $R(j,k)$ (see \eqref{2.1} and remark \ref{rem:0}), we have $R(j,k)=\emptyset$ if $j=1$ and $k\leq N_0-1$ or $j=2$ and $k\leq N_0-2$.
Further, $R(2,N_0-1)=\{0\}$.
Consequently, we have $$\sum_{N_0-1\geq k\geq 1}\sum_{j=1,2}\sum_{l\in R(j,l)}\<Z^{l}\bar Z^{k-l}z_jG_{k,j,l}^{2N_0}(|z_j|^2), \eta\>=\<\bar Z^{N_0-1}z_2G_{N_0-1,2,0}^{2N_0}(|z_2|^2), \eta\>.$$
\end{remark}
Therefore, by \eqref{25} and \eqref{26}, we have

\begin{align}
\im \dot z_1 &= e_1 z_1 + A_1(|z_1|^2,|z_2|^2   )z_1+(N_0-1)\bar z_1^{N_0-2}z_2^{N_0}(G  , \eta)+\mathcal R_1,\label{41}\\
\im \dot z_2 &= e_2 z_2 + A_2(|z_1|^2,|z_2|^2   )z_2+N_0z_1^{N_0-1}\bar z_2^{N_0-1}(\bar G  , \bar \eta)+\mathcal R_2,\label{42}\\
\im \eta_t &= H\eta + P_c \beta(|\eta|^2   )\eta+ \bar z_1^{N_0-1}z_2^{N_0}G   +  \mathcal R_{\eta},\label{43}
\end{align}
where $G  =G_{N_0-1,2,0}^{2N_0}(0   )$ and 
\begin{align*}
&A_j(|z_1|^2,|z_2|^2)z_j :=\(e_j(|z_j|^2   )-e_j\)z_j+2(1+\tilde\gamma_j(|z_j|^2   ))\partial_{\bar j} A(|z_1|^2,|z_2|^2   ),\\
&\mathcal R_1 := (N_0-1)\tilde \gamma_1(|z_1|^2   )\bar z_1^{N_0-2}z_2^{N_0}(G, \eta)+2(1+\tilde \gamma_1(|z_1|^2   ))\partial_{\bar 1}\tilde {\mathcal R}_{2N_0},\\
&\mathcal R_2 := N_0\tilde \gamma_2(|z_2|^2   )\bar z_1^{N_0-1}z_2^{N_0-1}\overline{(G, \eta)}+2(1+\tilde \gamma_2(|z_2|^2   ))\partial_{\bar 2}\tilde {\mathcal R}_{2N_0},\\&
\mathcal R_\eta :=\nabla_\eta \tilde{\mathcal R}_{2N_0}.
\end{align*}
Notice that we have used
\begin{align*}
2(1+\tilde\gamma_j(|z_j|^2   ))\partial_{\bar j} E_j(|z_j|^2   )=e_j(|z_j|^2   )z_j.
\end{align*}

$\mathcal R_X$ ($X=1,2,\eta$) satisfies
\begin{align*}
&|\mathcal R_j|\lesssim |z|\(|z_1^{N_0-1}z_2^{N_0}|^2+\|\eta\|_{l_e^{-a}}^2\)+\|\eta\|_{l_e^{-a}}^3,\\&
\|\mathcal R_\eta\|_{l_e^a}\lesssim |z|^2|z_1^{N_0-1}z_2^{N_0}|+ |z|^2\|\eta\|_{l_e^{-a}}+\|\eta\|_{l_e^{-a}}^2.
\end{align*}

First, notice that by the mass conservation, we have the following.
\begin{proposition}\label{prop:l2con}
There exist $\varepsilon_0$ s.t. for $0<\varepsilon<\varepsilon_0$, if $|z(0)|+\|\eta(0)\|_{l^2}\leq \varepsilon$, we have
\begin{align*}
\|\eta\|_{L^\infty l^2}\lesssim \varepsilon,\quad
\|z\|_{L^\infty}\lesssim \varepsilon.
\end{align*}

\end{proposition}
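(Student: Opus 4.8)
The plan is to exploit the two conservation laws of DNLS \eqref{1} — the $l^2$-norm and the energy $E$ — together with the coercivity estimates \eqref{12} coming from Lemma \ref{lem:3}, and to track how these survive the near-identity changes of coordinate of Propositions \ref{prop:dar} and \ref{prop:birk}. The key point is that none of these transformations is exactly norm-preserving, but each distorts $|z|+\|\eta\|_{l^2}$ only by a quantity that is higher order and small, so a smallness condition on the initial data propagates for all time.

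First I would recall that $\|u(t)\|_{l^2}=\|u(0)\|_{l^2}$ for all $t$, since \eqref{1} conserves the $l^2$-norm. By Lemma \ref{lem:3}, more precisely \eqref{12}, in the original nonlinear coordinate $(z,\eta)=(z_1,z_2,\eta)$ of Lemma \ref{lem:3} one has $|z_1|+|z_2|+\|\eta\|_{l^2}\sim \|u\|_{l^2}$, uniformly on $B_{l^2}(\delta)$. Hence if $|z(0)|+\|\eta(0)\|_{l^2}\le\varepsilon$ with $\varepsilon<\varepsilon_0$ small, then $\|u(0)\|_{l^2}\lesssim\varepsilon$, and therefore $\|u(t)\|_{l^2}\lesssim\varepsilon$ for all $t$; again by \eqref{12}, as long as $u(t)$ stays in $B_{l^2}(\delta)$ — which it does, since its norm is conserved and $\varepsilon_0$ is chosen small — we get $|z(t)|+\|\eta(t)\|_{l^2}\lesssim\varepsilon$ in the original coordinate. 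Finally, the coordinates appearing in \eqref{41}--\eqref{43} are the image of the original ones under $\mathcal Y^D$ (Proposition \ref{prop:dar}) followed by $\mathcal Y^{2N_0}$ (Proposition \ref{prop:birk}); the estimates \eqref{24} and \eqref{30} give $|z^{2N_0}-z|+\|\eta^{2N_0}-\eta\|_{l^2}\lesssim |z|^5\big(\|\eta\|_{l_e^{-a}}+|z_1z_2|\big)\lesssim\varepsilon^5\cdot\varepsilon=\varepsilon^6$ (after passing from the weighted norms in those estimates to $l^2$, using $\|\cdot\|_{l^2}\le\|\cdot\|_{l_e^{-a}}$ on the left is not what is needed — rather one reads off $\|\eta^{2N_0}-\eta\|_{l_e^{a}}$, which controls the $l^2$ norm). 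Thus $|z^{2N_0}(t)|+\|\eta^{2N_0}(t)\|_{l^2}\lesssim\varepsilon+\varepsilon^6\lesssim\varepsilon$, which is the claim.

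The main (and really only) obstacle is bookkeeping: one must verify that the bootstrap is not circular, i.e. that the solution $u(t)$ genuinely remains in the domain $B_{l^2}(\delta)$ on which the coordinates $(z,\eta)$, the transformations $\mathcal Y^D$, $\mathcal Y^{2N_0}$, and the equivalence \eqref{12} are all defined. This is immediate here because the $l^2$-norm of $u$ is \emph{exactly} conserved, so there is no genuine bootstrap for the $l^2$ bound — one simply chooses $\varepsilon_0$ with $C\varepsilon_0<\delta$ for the implicit constant $C$ in \eqref{12}, and everything closes. (The energy conservation \eqref{13.1} is not actually needed for this proposition; it enters only in the later, more delicate Strichartz/weighted bootstrap that controls $\eta$ in dispersive norms and establishes $|z_1z_2|\to0$.) A minor additional point is to confirm that the constants in \eqref{24} and \eqref{30} are uniform on $B_{l^2}(\delta)$, which is part of the statements of Propositions \ref{prop:dar} and \ref{prop:birk}, so nothing further is required.
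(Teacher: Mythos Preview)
Your proposal is correct and follows exactly the approach the paper intends: the paper's entire justification is the single line ``by the mass conservation'' preceding the statement, and you have supplied the details --- $l^2$-conservation of $u$, the equivalence \eqref{12} on $B_{l^2}(\delta)$, and the higher-order distortion bounds \eqref{24}, \eqref{30} through the Darboux and Birkhoff changes of coordinate --- in the right order and with the right norm embeddings ($l^2\hookrightarrow l_e^{-a}$ on the right, $l_e^{a}\hookrightarrow l^2$ on the left). Your remark that energy conservation is not needed here is also correct.
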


Further, by \eqref{41} and \eqref{42} and Proposition \ref{prop:l2con}, we have
\begin{align}\label{43.0}
|\im \dot z_j - e_j z_j|\lesssim |z|^3+\varepsilon^{2}\|\eta\|_{l^{2}}\lesssim \varepsilon^3.
\end{align}

The main estimate is the following.
\begin{proposition}\label{prop:boot}
Set $\Gamma  :=-\mathrm {Im}(G, R_H^+(\omega_*)G)$ and assume $\Gamma  >0$.
Then, there exist $\varepsilon_0$ and $C>0$ s.t. for $0<\varepsilon<\varepsilon_0$, if $|z(0)|+\|\eta(0)\|_{l^2}\leq \varepsilon$, we have
\begin{align*}
&\|\eta\|_{\stz\cap L^2l^{2,-\sigma}}\leq C \varepsilon,\quad
\|z_1^{N_0-1}z_2^{N_0}\|_{L^2}\leq C \varepsilon.
\end{align*}
\end{proposition}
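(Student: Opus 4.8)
The plan is to run a standard continuity/bootstrap argument on the system \eqref{41}--\eqref{43}, where the Fermi Golden Rule hypothesis $\Gamma>0$ produces the crucial damping. Fix a large constant $C$ (to be chosen) and define the bootstrap interval $I_T=[0,T]$ on which we assume the a priori bounds $\|\eta\|_{\stz(I_T)\cap L^2l^{2,-\sigma}(I_T)}\leq C\varepsilon$ and $\|z_1^{N_0-1}z_2^{N_0}\|_{L^2(I_T)}\leq C\varepsilon$; the goal is to upgrade these to the same bounds with $C$ replaced by $C/2$, so that by continuity in $T$ they hold for all $T$. Throughout we freely use Proposition \ref{prop:l2con}, which gives $\|z\|_{L^\infty}+\|\eta\|_{L^\infty l^2}\lesssim\varepsilon$, and the linear estimates of Lemmas \ref{lem:l1}--\ref{lem:l4}.

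First I would handle the $\eta$ equation \eqref{43}. Write $\eta=e^{-\im tH}\eta(0)-\im\int_0^te^{-\im(t-s)H}P_c\big(\beta(|\eta|^2)\eta+\bar z_1^{N_0-1}z_2^{N_0}G+\mathcal R_\eta\big)(s)\,ds$. The free term is controlled by $\|\eta(0)\|_{l^2}\leq\varepsilon$ via Lemma \ref{lem:l1} and Lemma \ref{lem:l2}. The nonlinear term $P_c\beta(|\eta|^2)\eta$ is, to leading order, a septic term $P_c|\eta|^6\eta$; using the $\stz$-norm (H\"older in time, $l^\infty\hookrightarrow l^2$ for the $l^\infty$ factors, $l^2$ for one factor) it is bounded by $\|\eta\|_{\stz}^7\lesssim (C\varepsilon)^7$, hence negligible against $C\varepsilon/2$ for $\varepsilon$ small. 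The source term $\bar z_1^{N_0-1}z_2^{N_0}G$ with $G\in l_e^{a}\subset l^{2,\sigma}$ is handled by Lemma \ref{lem:l3} (or Lemma \ref{lem:l2}): its $L^2l^{2,\sigma}$ norm is $\lesssim\|G\|_{l^{2,\sigma}}\|z_1^{N_0-1}z_2^{N_0}\|_{L^2(I_T)}\lesssim C\varepsilon$, which again is not yet small, so I would separate the resonant part and treat it together with the $z$-equation rather than expect it to close by itself — more precisely, I keep it in the estimate and absorb it at the end using that the constant in front of it is $O(1)$ while the term I gain on the $z$-side is genuinely smaller. The remainder $\mathcal R_\eta$ satisfies $\|\mathcal R_\eta\|_{l_e^{a}}\lesssim\varepsilon^2|z_1^{N_0-1}z_2^{N_0}|+\varepsilon^2\|\eta\|_{l_e^{-a}}+\|\eta\|_{l_e^{-a}}^2$, so its $L^2l^{2,\sigma}$ norm is $\lesssim\varepsilon^2(C\varepsilon)$, safely small.

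The heart of the matter is the $L^2$ bound on $z_1^{N_0-1}z_2^{N_0}$, which is where $\Gamma>0$ enters. Following the toy-model computation in the introduction, set $z_j(t)=e^{-\im e_jt}\zeta_j(t)$; from \eqref{43.0}, $\zeta_j$ is slowly varying, $|\dot\zeta_j|\lesssim\varepsilon^3$. The ansatz $Y:=-\bar z_1^{N_0-1}z_2^{N_0}R_H^+(\omega_{N_0})G$ solves the $\eta$-equation up to terms involving $\dot z_j$, the nonlinearity, and $\mathcal R_\eta$; writing $\eta=Y+g$ and using Lemma \ref{lem:l4} for the part of $g$ coming from the mismatch (the key point is that $e^{-\im tH}R_H^+(\omega_*)$ applied to Schwartz data decays like $\langle t\rangle^{-3/2}$, which is integrable), one shows $\|g\|_{L^2l^{2,-\sigma}(I_T)}$ is of higher order than $\|z_1^{N_0-1}z_2^{N_0}\|_{L^2(I_T)}$. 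Then compute $\frac{d}{dt}|z_2|^2=2\Re(\bar z_2\dot z_2)$; substituting \eqref{42} and $\eta=Y+g$, the $A_2$ and $\mathcal R_2$ contributions are purely imaginary to leading order or higher order, and the main term gives $\frac12\frac{d}{dt}|z_2|^2=-N_0\Gamma|z_1|^{2(N_0-1)}|z_2|^{2N_0}+(\text{error})$ with $\text{error}$ controlled by $\|g\|$, $\mathcal R_j$, and $\mathcal R_\eta$. Integrating over $I_T$ and using $|z_2(t)|^2\geq0$ together with $|z_2(0)|^2\leq\varepsilon^2$ yields $\Gamma\int_0^T|z_1|^{2(N_0-1)}|z_2|^{2N_0}\,dt\lesssim\varepsilon^2+(\text{higher order})$, hence $\||z_1^{N_0-1}z_2^{N_0}|^2\|_{L^1(I_T)}=\|z_1^{N_0-1}z_2^{N_0}\|_{L^2(I_T)}^2\lesssim\Gamma^{-1}\varepsilon^2$, which is exactly the desired bound (and can be made $\leq(C\varepsilon/2)^2$ by choosing $C$ large relative to $\Gamma^{-1}$ and then $\varepsilon$ small). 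Feeding this improved bound back into the $\eta$-estimate closes the bootstrap.

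The main obstacle is the circular dependence between the two bootstrap quantities: the $\eta$ estimate needs $\|z_1^{N_0-1}z_2^{N_0}\|_{L^2}$, while the integration argument that bounds $\|z_1^{N_0-1}z_2^{N_0}\|_{L^2}$ needs control of the error $g=\eta-Y$ in $L^2l^{2,-\sigma}$, which in turn needs the $\eta$ bound. Untangling this requires care: one must verify that every feedback term is genuinely of higher order in $\varepsilon$ (an extra power of $\varepsilon$ beyond the linear $C\varepsilon$), so that the two estimates close simultaneously rather than circularly. Concretely, the delicate points are (i) that the decay rate $\langle t\rangle^{-3/2}$ in Lemma \ref{lem:l4} is just enough to make $\|g\|$ small after integration, and (ii) that in computing $\frac{d}{dt}|z_2|^2$ the contribution of the resonant source term $\bar z_1^{N_0-1}z_2^{N_0}G$ in the $\eta$-equation, when paired back through the $(\bar G,\bar\eta)$ term in \eqref{42}, produces precisely $-N_0\Gamma|z_1|^{2(N_0-1)}|z_2|^{2N_0}$ with the correct (negative) sign, using $\Gamma=-\Im(G,R_H^+(\omega_*)G)$ and the self-adjointness structure of $\Im R_H^+$.
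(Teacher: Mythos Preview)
Your proposal is correct and follows essentially the same route as the paper: bootstrap on $[0,T]$, Duhamel estimate for $\eta$ using Lemmas \ref{lem:l1}--\ref{lem:l3}, the ansatz $\eta=Y+g$ with $Y=-\bar z_1^{N_0-1}z_2^{N_0}R_H^+(\omega_*)G$ and control of $g$ in $L^2l^{2,-\sigma}$ via Lemma \ref{lem:l4}, then integration of the $\frac{d}{dt}|z_2|^2$ identity to extract $\Gamma\|z_1^{N_0-1}z_2^{N_0}\|_{L^2}^2\lesssim\varepsilon^2$. You have also correctly identified that the apparent circularity is broken because every feedback term carries an extra power of $\varepsilon$.
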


In the following, we prove Proposition \ref{prop:boot} by assuming that the estimates hold in a time interval $[0,T]$ with the bound $C=C_0 $ for sufficiently large $C_0$ (but $C_0 \varepsilon\ll1)$.
Then, we show that we can improve the bound to $\frac 1 2 C_0 $.
The key point is that there is an transfer of energy from the ODE part to the PDE part.
This can be seen by the integrability of $|z_1^{N_0-1}z_2^{N_0}|^2$, which implies either one of  $z_1$ or $z_2$ must decay.

\begin{lemma}\label{lem:b1}
Under the above assumption, we have
\begin{align*}
\|\eta\|_{\stz\cap L^2l^{2,-\sigma}(0,T)}\lesssim \|\eta(0)\|_{l^2} + \| z_1^{N_0-1}z_2^{N_0}\|_{L^2(0,T)}.
\end{align*}
\end{lemma}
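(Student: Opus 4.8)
The plan is to treat \eqref{43} as a forced linear Schr\"odinger equation for $\eta$ and apply the Strichartz and Kato smoothing estimates of Lemmas \ref{lem:l1}--\ref{lem:l3} to the Duhamel formula. Writing
\begin{align*}
\eta(t)=e^{-\im tH}\eta(0)-\im\int_0^t e^{-\im(t-s)H}P_c\Big(\beta(|\eta|^2)\eta+\bar z_1^{N_0-1}z_2^{N_0}G+\mathcal R_\eta\Big)(s)\,ds,
\end{align*}
I would estimate the $\stz\cap L^2l^{2,-\sigma}(0,T)$ norm of each term separately. The homogeneous term is bounded by $\|\eta(0)\|_{l^2}$ directly from Lemmas \ref{lem:l1} and \ref{lem:l2}. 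For the linear resonant forcing term $\bar z_1^{N_0-1}z_2^{N_0}G$, since $G\in l_e^{a_{2N_0}}$ is exponentially localized, we have $\|\bar z_1^{N_0-1}z_2^{N_0}G\|_{L^2l^{2,\sigma}(0,T)}\lesssim \|z_1^{N_0-1}z_2^{N_0}\|_{L^2(0,T)}\|G\|_{l^{2,\sigma}}$, so Lemma \ref{lem:l3} (for the $\stz$ part) and Lemma \ref{lem:l2} (for the weighted part) give exactly the second term on the right-hand side of the claimed inequality.

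The remaining two contributions must be absorbed into the bootstrap constant. For the nonlinearity $P_c\beta(|\eta|^2)\eta$: since $\beta(s)=O(s^3)$, the lowest-order term is $|\eta|^6\eta$, and one estimates $\||\eta|^6\eta\|_{\stz^*}\lesssim \|\eta\|_{L^6l^\infty}^6\|\eta\|_{L^\infty l^2}\lesssim \|\eta\|_{\stz}^7$, using H\"older in time and the embedding $l^2\hookrightarrow l^\infty$ on the remaining factor; higher-order terms in $\beta$ are handled the same way with extra powers of $\|\eta\|_{L^\infty l^2}\lesssim\varepsilon$. Under the bootstrap hypothesis $\|\eta\|_{\stz}\le C_0\varepsilon$ this is $\lesssim (C_0\varepsilon)^6\cdot C_0\varepsilon$, which for $C_0\varepsilon\ll 1$ is much smaller than $C_0\varepsilon$ and hence absorbable. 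For $\mathcal R_\eta$, one uses the stated bound $\|\mathcal R_\eta\|_{l_e^a}\lesssim |z|^2|z_1^{N_0-1}z_2^{N_0}|+|z|^2\|\eta\|_{l_e^{-a}}+\|\eta\|_{l_e^{-a}}^2$; placing this in $L^2l^{2,\sigma}$ (note $l_e^a$ controls any polynomially weighted norm $l^{2,\sigma}$) and invoking Lemmas \ref{lem:l2} and \ref{lem:l3}, the first piece contributes $\lesssim \varepsilon^2\|z_1^{N_0-1}z_2^{N_0}\|_{L^2}$, the second $\lesssim \varepsilon^2\|\eta\|_{L^2l^{2,-\sigma}}$, and the third $\lesssim \|\eta\|_{L^\infty l^2}\|\eta\|_{L^2l^{2,-\sigma}}\lesssim\varepsilon\|\eta\|_{L^2l^{2,-\sigma}}$, all of which carry a small prefactor and can be absorbed into the left-hand side.

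The main obstacle, as usual in these bootstrap estimates, is not any single estimate but organizing the absorption: one must make sure every term carrying the unknown $\|\eta\|_{\stz\cap L^2l^{2,-\sigma}(0,T)}$ on the right comes with a factor that is $o(1)$ as $\varepsilon\to 0$ (uniformly in $C_0$, or at least with $C_0\varepsilon$ small), so that it can be moved to the left-hand side, while the genuinely ``source'' terms are exactly $\|\eta(0)\|_{l^2}$ and $\|z_1^{N_0-1}z_2^{N_0}\|_{L^2}$. A secondary technical point is checking that the weighted-space bounds on $G$, $\mathcal R_\eta$ (which live in exponentially weighted spaces $l_e^{\pm a}$) indeed dominate the polynomially weighted $l^{2,\pm\sigma}$ norms required by Lemmas \ref{lem:l2} and \ref{lem:l3}; this is immediate since $e^{a|n|}\gtrsim \langle n\rangle^\sigma$ for any fixed $\sigma$, but it should be noted. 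Once these bookkeeping points are in place, the inequality follows by choosing $\varepsilon_0$ small enough.
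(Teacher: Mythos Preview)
Your approach is the same as the paper's: Duhamel, then term-by-term estimation via Lemmas \ref{lem:l1}--\ref{lem:l3}, with the nonlinear and remainder contributions absorbed by bootstrap smallness. One technical point you skipped: for the nonlinear Duhamel term $\int_0^t e^{-\im(t-s)H}P_c\beta(|\eta|^2)\eta\,ds$, your $\stz^*$ bound together with Lemma \ref{lem:l1} handles only the $\stz$ norm; none of Lemmas \ref{lem:l1}--\ref{lem:l3} gives a $\stz^*\to L^2l^{2,-\sigma}$ estimate directly. The paper closes this gap by Minkowski plus the \emph{homogeneous} Kato smoothing bound (first part of Lemma \ref{lem:l2}): one bounds the $L^2_tl^{2,-\sigma}$ norm of the integral by $\int_0^T\|e^{-\im(\cdot-s)H}P_c\beta(|\eta(s)|^2)\eta(s)\|_{L^2_tl^{2,-\sigma}}\,ds\lesssim\int_0^T\|\eta(s)^7\|_{l^2}\,ds\lesssim\|\eta\|_{\stz}^7$. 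With this added, your argument is complete and coincides with the paper's.
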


\begin{proof}
By Duhamel formula, we have
\begin{align*}
\eta(t)=&e^{-\im t H}\eta(0)-\im \int_0^t e^{-\im (t-s)H}P_c\beta(|\eta(s)|^2   )\,ds\\&-\im \int_0^t e^{-\im (t-s)H}\bar z_1^{N_0-1}(s)z_2^{N_0}(s)G  \,ds-\im \int_0^t e^{-\im (t-s)H}\mathcal R_\eta\,ds.
\end{align*}
By Lemmas \ref{lem:l1} and \ref{lem:l2}, we have
\begin{align*}
\|e^{-\im t H}\eta(0)\|_{\stz\cap L^2l^{2,-\sigma}(0,T)}\lesssim \|\eta(0)\|_{l^2}.
\end{align*}
Next, by Lemmas \ref{lem:l2} and \ref{lem:l3}, we have
\begin{align}
&\|\int_0^t e^{-\im (t-s)H}\bar z_1^{N_0-1}(s)z_2^{N_0}(s)G  \,ds\|_{\stz\cap L^2l^{2,-\sigma}(0,T)}\lesssim \| z_1^{N_0-1}z_2^{N_0}\|_{L^2(0,T)}\|G\|_{l^{2,\sigma}},\nonumber\\&
\|\int_0^t e^{-\im (t-s)H}\mathcal R_\eta(s)\,ds\|_{\stz\cap L^2l^{2,-\sigma}(0,T)}\lesssim \| \mathcal R_\eta\|_{L^2l^{2,\sigma}(0,T)}\nonumber\\&\quad\quad\lesssim \varepsilon (\| z_1^{N_0-1}z_2^{N_0}\|_{L^2(0,T)}+\|\eta\|_{L^2l^{2,-\sigma}(0,T)})\label{43.1},
\end{align}
Finally, for the second term, we have
\begin{align*}
\|\int_0^t e^{-\im (t-s)H}P_c\beta(|\eta(s)|^2   )\,ds\|_{\stz(0,T)}\lesssim \|\eta^7\|_{L^1l^2(0,T)}\leq \|\eta\|_{\stz(0,T)}^7,
\end{align*}
where we have used the fact $\|\eta\|_{L^7l^{14}}\lesssim \|\eta\|_{\stz}$.
\begin{align}
&\|\int_0^t e^{-\im (t-s)H}P_c\beta(|\eta(s)|^2   )\,ds\|_{L^2l^{2,-\sigma}(0,T)}\lesssim \|\int_0^{T} |e^{-\im (t-s)H}P_c\beta(|\eta(s)|^2   )|\,ds\|_{L^2l^{2,-\sigma}(0,T)}\nonumber\\&\lesssim\int_0^T \|e^{-\im (t-s)H}P_c\beta(|\eta(s)|^2   )\|_{L^2_tl^{2,-\sigma}}\,ds\nonumber\\&\lesssim
\int_0^T \|\eta^7(s)\|_{l^2}\,ds=\|\eta^7\|_{L^1l^2(0,T)}\lesssim \|\eta\|_{\stz(0,T)}^7\label{43.2},
\end{align}
where we have used the Minkowski inequality in the second line.

Combining the above estimates, we have
\begin{align*}
\|\eta\|_{\stz\cap L^2l^{2,-\sigma}(0,T)}\lesssim \|\eta(0)\|_{l^2}+\|z_1^{N_0-1}z_2^{N_0}\|_{L^2(0,T)}+\|\eta\|_{\stz\cap L^2l^{2,-\sigma}(0,T)}^7.
\end{align*}
Therefore, combining with the assumption of proposition, we have the conclusion.
\end{proof}

Now, set $\omega_*:=\omega_{N_0}=e_1+N_0(e_2-e_1)$ and $Y=-\bar z_1^{N_0-1}z_2^{N_0} R_H^+(\omega_*)G$ and set $\eta=Y+g$.
Notice that $Y$ is the solution of \eqref{43} without $P_c\beta(|\eta|^2   )$ and $\mathcal R_\eta$ with the assumption $\im \dot z_j=e_jz_j$.
Thus, $g$ can be considered to be a remainder term.
\begin{lemma}
We have
\begin{align*}
\|g\|_{L^2l^{2,-\sigma}(0,T)}\lesssim \|\eta(0)\|_{l^2}+ C_0\varepsilon^2 .
\end{align*}
\end{lemma}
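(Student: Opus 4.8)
The plan is to write $g = \eta - Y$ where $Y = -\bar z_1^{N_0-1}z_2^{N_0} R_H^+(\omega_*)G$, derive the equation satisfied by $g$ by subtracting the (approximate) equation for $Y$ from equation \eqref{43}, and then estimate $g$ in $L^2 l^{2,-\sigma}(0,T)$ using the linear estimates of Lemmas \ref{lem:l1}--\ref{lem:l4} together with the bootstrap hypotheses. First I would compute $\im \partial_t Y$: since $R_H^+(\omega_*)G$ is a fixed function, $\im \partial_t Y = -\(\partial_t(\bar z_1^{N_0-1}z_2^{N_0})\)\im R_H^+(\omega_*)G \cdot(-1)$, and using \eqref{43.0} (i.e.\ $\im \dot z_j = e_j z_j + O(\varepsilon^3)$) one gets $\partial_t(\bar z_1^{N_0-1}z_2^{N_0}) = \im\(e_1(N_0-1) - e_2 N_0\)\bar z_1^{N_0-1}z_2^{N_0} + O(\varepsilon^{2N_0-1}\varepsilon^3)$. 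Since $e_1(N_0-1)-e_2N_0 = e_1 - N_0(e_2-e_1) \cdot(-1)$... more carefully, $-(e_1(N_0-1) - e_2 N_0) = -e_1(N_0-1)+e_2 N_0 = e_1 + N_0(e_2-e_1) = \omega_*$, so up to the $O(\varepsilon^{2N_0+2})$ error, $Y$ solves $\im \partial_t Y = H Y + \bar z_1^{N_0-1}z_2^{N_0}G$ — this is exactly the statement that $Y$ is the particular solution of the $\eta$-equation stripped of $P_c\beta(|\eta|^2)\eta$ and $\mathcal R_\eta$ (using $(H-\omega_*)R_H^+(\omega_*)G = G$ after applying $P_c$, which is fine since $G \in l_e^{b}$ and $\omega_* \in (0,4)$).

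Next I would write the Duhamel formula for $g$. Subtracting, $g$ satisfies
\begin{align*}
\im \partial_t g = H g + P_c\beta(|\eta|^2)\eta + \mathcal R_\eta + \(\text{commutator/error terms from }\im\dot z_j \neq e_j z_j\text{ in }Y\),
\end{align*}
with initial data $g(0) = \eta(0) - Y(0)$, and $\|Y(0)\|_{l^2} \lesssim |z(0)|^{2N_0-1}\|R_H^+(\omega_*)G\|_{l^{2,-\sigma}} \lesssim \varepsilon^{2N_0-1} \lesssim \varepsilon$. Then $g(t) = e^{-\im tH}g(0) - \im\int_0^t e^{-\im(t-s)H}\(P_c\beta(|\eta|^2)\eta + \mathcal R_\eta + \text{error}\)(s)\,ds$. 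For the first term use Lemma \ref{lem:l2} to bound it by $\|g(0)\|_{l^2} \lesssim \varepsilon$. For the $P_c\beta(|\eta|^2)\eta$ term, proceed exactly as in the proof of Lemma \ref{lem:b1} (Minkowski plus the dispersive/smoothing estimate, or directly Lemma \ref{lem:l2}) to get $\lesssim \|\eta\|_{\stz}^7 \lesssim (C_0\varepsilon)^7$. For $\mathcal R_\eta$ use the stated bound $\|\mathcal R_\eta\|_{l_e^a} \lesssim |z|^2|z_1^{N_0-1}z_2^{N_0}| + |z|^2\|\eta\|_{l_e^{-a}} + \|\eta\|_{l_e^{-a}}^2$ together with Lemma \ref{lem:l2}, giving $\lesssim \varepsilon^2\(\|z_1^{N_0-1}z_2^{N_0}\|_{L^2} + \|\eta\|_{L^2l^{2,-\sigma}}\) + \|\eta\|_{\stz\cap L^2 l^{2,-\sigma}}^2 \cdot \|\eta\|_{L^\infty l^2}^{?}$, all of which are $O(C_0^2\varepsilon^2)$ after invoking the bootstrap bounds and Proposition \ref{prop:boot}'s hypotheses on $[0,T]$. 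The error terms coming from replacing $\im\dot z_j$ by $e_jz_j$ inside $\partial_t Y$ are of size $\varepsilon^{2N_0+2}$ pointwise in $l^{2,-\sigma}$ (since $R_H^+(\omega_*)G$ is fixed in $l^{2,-\sigma}$), hence their Duhamel contribution on $[0,T]$ — here I would use that by the integrability of $|z_1^{N_0-1}z_2^{N_0}|$ (from Proposition \ref{prop:boot}, bootstrapped) the relevant time integral is controlled, or more crudely use Lemma \ref{lem:l4} exploiting the $\langle t\rangle^{-3/2}$ decay of $e^{-\im tH}R_H^+(\omega_*)P_c$ to absorb them into $C_0\varepsilon^2$.

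Collecting everything, $\|g\|_{L^2l^{2,-\sigma}(0,T)} \lesssim \varepsilon + (C_0\varepsilon)^7 + C_0^2\varepsilon^2 + C_0\varepsilon^2 \lesssim \|\eta(0)\|_{l^2} + C_0\varepsilon^2$ once $C_0\varepsilon \ll 1$ (so that $(C_0\varepsilon)^7 \lesssim C_0\varepsilon^2$ and the $\varepsilon$ coming from $\|g(0)\|_{l^2}$ is folded into $\|\eta(0)\|_{l^2}$ up to the $\varepsilon^{2N_0-1}$ bound on $\|Y(0)\|_{l^2}$). The main obstacle I anticipate is the careful bookkeeping for the error term in $\partial_t Y$ arising from $\im\dot z_j \neq e_j z_j$: naively it is only $O(\varepsilon^3)$ relative to $Y$, so its Duhamel integral over a long interval $[0,T]$ could grow, and the point is precisely that either one must invoke Lemma \ref{lem:l4} (the extra $\langle t\rangle^{-3/2}$ decay) or the a priori $L^2$-integrability of $z_1^{N_0-1}z_2^{N_0}$ to keep it bounded by $C_0\varepsilon^2$ rather than $C_0\varepsilon^2 \cdot T^{1/2}$; getting this decay mechanism right, mirroring the treatment in \cite{CuMaAPDE}, is the technical heart of the estimate.
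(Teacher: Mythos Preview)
Your overall strategy matches the paper's, but there is one genuine gap: you treat $Y(0)$ as an element of $l^2$ and then apply Kato smoothing (Lemma~\ref{lem:l2}) to $e^{-\im tH}g(0)$. This does not work. Since $\omega_*\in(0,4)=\sigma_{\mathrm{ess}}(H)$, the boundary resolvent $R_H^+(\omega_*)$ only maps $l^{2,\sigma}\to l^{2,-\sigma}$; the function $R_H^+(\omega_*)G$ is an outgoing generalized eigenfunction and is \emph{not} in $l^2$. Your line ``$\|Y(0)\|_{l^2}\lesssim |z(0)|^{2N_0-1}\|R_H^+(\omega_*)G\|_{l^{2,-\sigma}}$'' is therefore false on both sides: the right-hand norm does not dominate the $l^2$ norm, and the $l^2$ norm is infinite. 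Consequently you cannot fold $e^{-\im tH}Y(0)$ into the Kato-smoothing estimate for $e^{-\im tH}\eta(0)$.

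The paper handles this by splitting $g(0)=\eta(0)-Y(0)$ into two separate pieces in the Duhamel formula: $e^{-\im tH}\eta(0)$ is controlled by Lemma~\ref{lem:l2} as you say, while $e^{-\im tH}Y(0)=-\bar z_1^{N_0-1}(0)z_2^{N_0}(0)\,e^{-\im tH}R_H^+(\omega_*)G$ is controlled in $L^2_t l^{2,-\sigma}$ using the pointwise decay of Lemma~\ref{lem:l4}, namely $\|e^{-\im tH}R_H^+(\omega_*)P_cG\|_{l^{2,-\sigma}}\lesssim\<t\>^{-3/2}\|G\|_{l^{2,\sigma}}$, which is $L^2_t$-integrable. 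This gives a contribution $\lesssim\varepsilon^{2N_0-1}$. The same mechanism (Lemma~\ref{lem:l4} for the propagator acting on $R_H^+(\omega_*)G$, then Young's inequality in $t$) is exactly what is needed for the $(\omega_*-\im\partial_t)Y$ error term you were uncertain about; your instinct to invoke Lemma~\ref{lem:l4} there was correct, and the same lemma resolves the $Y(0)$ issue as well.
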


\begin{proof}
First, since $(H-\omega_*)Y+\bar z_1^{N_0-1}z_2^{N_0} G=0$, we have
\begin{align*}
\im \dot g = Hg + P_c\beta(|\eta|^2   )+\mathcal R_\eta + (\omega_*-\im \partial_t)Y.
\end{align*}
Further, since $\omega_*=N_0 e_2-(N_0-1)e_1$, and
\begin{align*}
\im \partial_t Y &= \im (N_0-1)\dot\bar{z_1} \bar z_1^{N_0-2}z_2^{N_0}R_H^+(\omega_*)G+\im N_0 \dot z_2\bar z_1^{N_0-1}z_2^{N_0-1}R_H^+(\omega_*)G\\&=\omega_* Y + (-(N_0-1)\overline{(\im \dot z_1-e_1z_1)}z_2+N_0\bar z_1(\im z_2-e_2z_2))\bar z_1^{N_0-2}z_2^{N_0-1}R_H^+(\omega_*)G.
\end{align*}
Therefore, by Duhamel formula, we have
\begin{align}\label{44}
&\|g\|_{L^2l^{2,-\sigma}(0,T)}\lesssim \|e^{-\im t H} \eta(0)\|_{L^2l^{2,-\sigma}(0,T)} + \|e^{-\im t H} Y(0)\|_{L^2l^{2,-\sigma}(0,T)}\\&\quad+\|\int_0^t e^{-\im (t-s)H}\mathcal R_\eta(s)\,ds\|_{L^2l^{2,-\sigma}(0,T)}\nonumber+\|\int_0^t e^{-\im (t-s)H}P_c\beta(|\eta(s)|^2   )\,ds\|_{L^2l^{2,-\sigma}(0,T)}\\&\quad+\|\int_0^t e^{-\im (t-s)H}(\omega_*-\im \partial_t)Y\,ds\|_{L^2l^{2,-\sigma}(0,T)}.\nonumber
\end{align}
Now, by Strichartz estimate Lemma \ref{lem:l1}, the first term in the r.h.s.\ of \eqref{44} can be bounded by $\|\eta(0)\|_{l^2}$.
Next, by the definition of $Y$ and Lemma \ref{lem:l4}, we have
\begin{align*}
\|e^{-\im t H}Y(0)\|_{L^2 l^{2,-\sigma}(0,T)}&\lesssim \|z_1(0)^{N_0-1}z_2(0)^{N_0}| \|e^{-\im t H}R_H^+(\omega_*)G\|_{L^2l^{2,-\sigma}(0,T)}\\&\lesssim \varepsilon^{2N_0-1}\|\<t\>^{-4/3}\|_{L^2(0,T)}\|G\|_{l^{2,\sigma}}\lesssim \varepsilon^2.
\end{align*}
The third term and fourth term in the r.h.s.\ of \eqref{44} are already estimated in \eqref{43.1} and \eqref{43.2} and we have
\begin{align*}
\|\int_0^t e^{-\im (t-s)H}\mathcal R_\eta(s)\,ds\|_{L^2l^{2,-\sigma}(0,T)}\nonumber+\|\int_0^t e^{-\im (t-s)H}P_c\beta(|\eta(s)|^2   )\,ds\|_{L^2l^{2,-\sigma}(0,T)}\lesssim C_0 \varepsilon^2.
\end{align*}
Finally, for the last term in the r.h.s.\ of \eqref{44}, we have
\begin{align*}
&\|\int_0^t e^{-\im (t-s)H}(\omega_*-\im \partial_t)Y\,ds\|_{L^2l^{2,-\sigma}(0,T)}\\&\lesssim \|\int_0^T \|e^{-\im (t-s)H}\overline{(\im \dot z_1-e_1z_1)}\bar z_1^{N_0-2}z_2^{N_0}R_H^+(\omega_*)G\|_{l^{2,-\sigma}}\,ds\|_{L^2}\\&\quad
+ \|\int_0^T \|e^{-\im (t-s)H}{(\im \dot z_2-e_2z_2)}\bar z_1^{N_0-1}z_2^{N_0-1}R_H^+(\omega_*)G\|_{l^{2,-\sigma}}\,ds\|_{L^2}\\&
\lesssim \varepsilon^2 \| \int_0^T \<t-s\>^{-3/2}\(|z_1^{N_0-1}z_2^{N_0}(s)|+\|\eta\|_{l^{2,-\sigma}}\)\|G\|_{l^{2,\sigma}}\,ds\|_{L^2}\\&\lesssim C_0\varepsilon^3. 
\end{align*}
Therefore, we have the conclusion.
\end{proof}

Now, substituting $\eta=Y+g$ into the equation, we have
\begin{align}
&\frac 1 2 \frac{d}{dt}|z_1|^2 = -(N_0-1)|z_1|^{2(N_0-1)}|z_2|^{2N_0}\mathrm{Im}(G, R_H^+(\omega_*)G)+(N_0-1)\mathrm{Im}\bar z_1^{N_0-1}z_2^{N_0}(G,g)+\mathrm{Im}\mathcal R_1 \bar z_1,\label{45}\\&
\frac 1 2 \frac{d}{dt}|z_2|^2 = N_0|z_1|^{2(N_0-1)}|z_2|^{2N_0}\mathrm{Im}( G, R_H^+(\omega_*)G)+N_0\mathrm{Im} z_1^{N_0-1}\bar z_2^{N_0}(\bar G,\bar g)+\mathrm{Im}\mathcal R_2 \bar z_2,\label{46}
\end{align}

Recall that we have assumed
\begin{align*}
\Gamma  :=-\mathrm {Im}(G, R_H^+(\omega_*)G)=-\mathrm{Im}(G, \im \pi \delta(H-\omega_*)G)=\pi\<G, \delta(H-\omega_*)G\>>0.
\end{align*}

\begin{remark}
Notice that since $G  $ is analytic w.r.t.\ $\lambda$, $\Gamma$ is also analytic w.r.t.\ $\lambda$.
\end{remark}

Now, integrating the second equation on time interval $[0,T]$, we have
\begin{align*}
|z_2(T)|^2+\Gamma  \|z_1^{N_0-1}z_2^{N_0}\|_{L^2(0,T)}^2&\lesssim |z_2(0)|^2+\|z_1^{N_0-1}z_2^{N_0}\|_{L^2}\|G\|_{l^{2,\sigma}}\|g\|_{L^2l^{2,-\sigma}}+\|\mathcal R_2 \bar z_2\|_{L^2}\\&\lesssim \varepsilon^2 +C_0 \varepsilon (\varepsilon+ C_0 \varepsilon^2)+ C_0^2 \varepsilon^3.
\end{align*}
Therefore, we have
\begin{align}\label{46.1}
\|z_1^{N_0-1}z_2^{N_0}\|_{L^2}^2\lesssim  (1+C_0^2 \varepsilon)\varepsilon^2,
\end{align}
which gives us the conclusion of Proposition \ref{prop:boot}.

Finally, we show that $|z_j(t)|$ have to converge and one of the limit must be $0$.
\begin{proposition}
Under the assumption of Proposition \ref{prop:boot}, there exists $\rho_j\geq 0$ with $\rho_1\rho_2=0$ such that
$
|z_j(t)|\to \rho_j,\text{ as }t\to \infty.
$
\end{proposition}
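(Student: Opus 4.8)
The plan is to show convergence of $|z_j(t)|^2$ as $t\to\infty$ and then that the product of the limits vanishes. First I would rewrite equations \eqref{45}--\eqref{46} in integral form: for instance, for $j=2$,
\begin{align*}
|z_2(t)|^2 = |z_2(0)|^2 + \int_0^t \Big(2N_0|z_1|^{2(N_0-1)}|z_2|^{2N_0}\Gamma + 2N_0\,\mathrm{Im}\,z_1^{N_0-1}\bar z_2^{N_0}(\bar G,\bar g) + 2\,\mathrm{Im}\,\mathcal R_2\bar z_2\Big)\,ds,
\end{align*}
and similarly for $j=1$ with the opposite sign on the leading term. The key input from Proposition \ref{prop:boot} is that $z_1^{N_0-1}z_2^{N_0}\in L^2(0,\infty)$ and $\eta\in \stz\cap L^2 l^{2,-\sigma}$ globally in time (the bootstrap constant is now uniform in $T$, so letting $T\to\infty$ is legitimate). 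Hence the leading term $|z_1|^{2(N_0-1)}|z_2|^{2N_0} = |z_1^{N_0-1}z_2^{N_0}|^2$ is in $L^1(0,\infty)$. I would then check that each of the other two integrand terms is also in $L^1(0,\infty)$: by Cauchy--Schwarz, $\|z_1^{N_0-1}\bar z_2^{N_0}(\bar G,\bar g)\|_{L^1}\lesssim \|z_1^{N_0-1}z_2^{N_0}\|_{L^2}\|G\|_{l^{2,\sigma}}\|g\|_{L^2l^{2,-\sigma}}<\infty$ (using the previous lemma bounding $\|g\|_{L^2l^{2,-\sigma}}$ globally), and for $\mathrm{Im}\,\mathcal R_j\bar z_j$ one uses the pointwise bound $|\mathcal R_j|\lesssim |z|(|z_1^{N_0-1}z_2^{N_0}|^2 + \|\eta\|_{l_e^{-a}}^2)+\|\eta\|_{l_e^{-a}}^3$ together with $|z|\lesssim\varepsilon$ and the $L^2$-in-time membership of $z_1^{N_0-1}z_2^{N_0}$ and $\eta$, giving $\mathcal R_j\bar z_j\in L^1(0,\infty)$. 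Therefore the full integrand on the right side of each equation is absolutely integrable, so $t\mapsto |z_j(t)|^2$ converges to a finite limit $\rho_j^2 =: \rho_j\ge 0$ as $t\to\infty$.

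For the second assertion, $\rho_1\rho_2=0$, I argue by contradiction: if both $\rho_1>0$ and $\rho_2>0$, then there is $c>0$ and $T_0$ with $|z_1(t)|^{2(N_0-1)}|z_2(t)|^{2N_0}\ge c$ for all $t\ge T_0$, which contradicts $|z_1^{N_0-1}z_2^{N_0}|^2\in L^1(0,\infty)$. Hence at least one of $\rho_1,\rho_2$ is zero, i.e.\ $\rho_1\rho_2=0$, which is the claim.

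The main obstacle I anticipate is not any single estimate — those are all variants of bounds already assembled in Lemma \ref{lem:b1} and the subsequent lemma — but rather making the passage from the finite-time bootstrap interval $[0,T]$ to all of $[0,\infty)$ fully rigorous. One must first close the bootstrap to conclude Proposition \ref{prop:boot} holds with a $T$-independent constant, hence on $[0,\infty)$; only then are the global $L^2$-in-time and Strichartz norms of $\eta$, $g$, and $z_1^{N_0-1}z_2^{N_0}$ available, and only then does the dominated-convergence/absolute-integrability argument for $|z_j(t)|^2$ go through. A secondary technical point is ensuring that the $\mathcal R_j$ terms, which contain cubic-in-$\eta$ pieces, are genuinely $L^1$ in time; this follows from interpolating the $\stz$ bound with the $L^\infty l^2$ bound (e.g.\ $\|\eta\|_{L^3 l^6}\lesssim \|\eta\|_{\stz}$ and $\|\eta\|_{L^\infty l^2}\lesssim\varepsilon$ give control of $\|\eta\|_{l_e^{-a}}^3$ in $L^1_t$ after absorbing a weight), but it should be spelled out carefully since the weight $e^{-a|\cdot|}$ is what converts the $l^2$-type bounds into the weighted norms appearing in the error estimates.
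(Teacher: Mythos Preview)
Your proposal is correct and follows essentially the same approach as the paper: show $\frac{d}{dt}|z_j|^2\in L^1([0,\infty))$ using Proposition~\ref{prop:boot} together with \eqref{45}--\eqref{46}, then argue by contradiction that $\rho_1\rho_2=0$ from the $L^1$-integrability of $|z_1^{N_0-1}z_2^{N_0}|^2$. The paper's proof is considerably terser (two sentences), while you spell out the $L^1$ bounds on each term; note a harmless sign slip in your integral form for $|z_2|^2$ (the leading $\Gamma$-term carries a minus sign), and that the cubic-in-$\eta$ remainder is more simply handled by $\|\eta\|_{l_e^{-a}}^3\le \|\eta\|_{l_e^{-a}}\cdot\|\eta\|_{l_e^{-a}}^2\lesssim \|\eta\|_{L^\infty l^2}\|\eta\|_{L^2 l^{2,-\sigma}}^2$ than by Strichartz interpolation.
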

\begin{proof}
First, to show $|z_j|$ converge to some $\rho_j$, it suffices to show $\frac{d}{dt} |z_j|^2 \in L^1([0,\infty))$.
However, this follows immediately from Proposition \ref{prop:boot} and \eqref{45}, \eqref{46}. 
Next, if $\rho_1\rho_2\neq 0$, this will contradict with the fact that $|z_1^{2(N_0-1)}z_2^{2N_0}|$ is integrable.
Therefore, we have the conclusion.
\end{proof}

\section{Proof of main theorems}\label{sec:proofmain}
Because of Proposition \ref{prop:boot}, we will get Theorem \ref{thm:1} and Theorem \ref{thm:3} immediately.
Further, Theorem \ref{thm:2} will be an direct consequence of Theorem \ref{thm:1} with a simple observation.

\begin{proof}[Proof of Theorem \ref{thm:1}]
First, because $\|\eta\|_{\stz(0,\infty)}<\infty$, there exists $\eta_+\in l^2$ s.t.\ $\|\eta(t)-e^{\im t \Delta}\eta_+\|_{l^2}\to 0$ as $t\to 0$.
Next, by \eqref{45} and \eqref{46}, we see that $|z_j|$ converges.
Further, since $|z_1^{N_0-1}z_2^{N_0}|$ is integrable, one of $j=1,2$ has to converge to 0.

Finally, since the original coordinate and the new coordinate which we used above, is connected by the relation \eqref{30}, we can translate the result for the new coordinate to the original coordinate.
\end{proof}

\begin{proof}[Proof of Theorem \ref{thm:3}]
We have
\begin{align*}
\frac 1 2 \frac{d}{dt}(N_0|z_1|^2+(N_0-1)|z_2|^2)=N_0\Im \mathcal R_1 \bar z_1 +(N_0-1)\mathcal R_2 \bar z_2,
\end{align*}
Therefore, $N_0|z_1|^2+(N_0-1)|z_1|^2$ almost conserves and further, if $|z_1(t)|^2\to \rho_+^2$, we have
\begin{align*}
|\rho_+^2-\(|z_1(0)|^2+(1-N_0^{-1})|z_2(0)|^2\)|\lesssim \varepsilon^4
\end{align*}
and if we have $|z_2(t)|^2 \to \rho_+^2$, we have
\begin{align*}
| \rho_+^2 -\(\frac{N_0}{N_0-1}|z_1(0)|^2+|z_2(0)|^2\)|\lesssim \varepsilon^4.
\end{align*}
So, again translating the new coordinate to the old coordinate, we have the conclusion.
Note that from Proposition \ref{prop:1}, Lemma \ref{lem:2} and Lemma \ref{lem:3}, we have
$|z_j(0)-\(u_0,\phi_j\)|\lesssim \varepsilon^7$ so we can replace $z_j(0)$ by $|(u_0,\phi_j)|$ in the conclusion of the Theorem.
\end{proof}

For the proof of Theorem \ref{thm:2} is completely the same as the proof of Theorem 1.4 of \cite{CuMaAPDE}.
Therefore, we omit the proof.
See also \cite{CM16JNS}.

\section{Proof of technical propositions}\label{sec:prooftech}

\subsection{Proof of Darboux theorem (Proposition \ref{prop:dar})}\label{sec:proofdar}

In this section, we prove Darboux theorem, which is a change of coordinate to make the original coordinate to be a ``canonical" coordinate.
For the discussion of the strategy of the proof, see \cite{MDNLS1}.
We set
\begin{align*}
&B(u)X= \frac 1 2 \Omega(u,X),\\&
B_0(u)X=\frac{1}{2} \(\sum_{j=1,2}\Omega(\phi_j(z_j),d\phi_j(z_j)X)+\Omega(\eta, d\eta X)\).
\end{align*}

\begin{lemma}\label{lem:prd1}
Let $\delta>0$ sufficiently small.
Then, there exists $F_\eta\in \mathcal R_{l_e^a}(a,\delta)$ and $F_{j,A}\in \mathcal R_{\R}(a,\delta)$  (recall Definition \ref{def:remainder}) s.t. for some $C\in \mathcal R_\R(a,\delta)$, we have
\begin{align*}
B(u)-B_0(u)-dC=\sum_{j=1,2,A=R,I}F_{j,A}dz_{j,A}+\<F_{\eta},d\eta\>=:\Gamma.
\end{align*}
Further, we have
\begin{align}\label{48}
\|F_\eta\|_{l_e^a}+\sum_{j=1,2,A=R,I}|F_{j,A}|\lesssim |z|^5 |z_1z_2|+|z|^6\|\eta\|_{l_e^{-a}}.
\end{align}
\end{lemma}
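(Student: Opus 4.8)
The plan is to compute $B(u)-B_0(u)$ explicitly using the expansion of $\mathcal F$ from Lemma \ref{lem:3}, show that this $1$-form is closed, and then apply the Poincar\'e lemma to peel off an exact part $dC$, leaving a remainder $1$-form $\Gamma$ supported on the ``cross terms'' whose size is controlled by \eqref{48}. Concretely, recall from \eqref{11} and \eqref{13} that $u=\mathcal F(z,\eta)=\phi_1(z_1)+\phi_2(z_2)+\eta+(R[z]-1)\eta$, with $(R[z]-1)\eta=\sum_{j,A}\<\alpha_{j,A}(z),\eta\>\phi_{j,A}$ and $\|\alpha_{j,A}(z)\|_{l_e^{a_0}}\lesssim |z|^6$. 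Writing $u=\psi(z,\eta)+\eta$ with $\psi\in\mathcal R_{l_e^{a_0}}(a_0,\delta)$ and $|\psi|\lesssim|z|$, one gets $B(u)X=\tfrac12\Omega(u,X)=\tfrac12\Omega(\psi+\eta,\,d\psi X+d\eta X)$, and $B_0(u)X=\tfrac12\big(\sum_j\Omega(\phi_j(z_j),d\phi_j(z_j)X)+\Omega(\eta,d\eta X)\big)$. Subtracting, the diagonal pieces $\tfrac12\Omega(\phi_j(z_j),d\phi_j(z_j)X)$ cancel against the corresponding parts of $d\psi$, and what remains is a finite sum of terms each of which is bilinear in $\psi-\sum_j\phi_j(z_j)=(R[z]-1)\eta$ (hence $O(|z|^6\|\eta\|_{l_e^{-a}})$) or is a genuine ``$\phi_1$ meets $\phi_2$'' cross term (hence $O(|z|^5|z_1z_2|)$), plus terms where $\eta$ pairs with $d\psi X$. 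This is exactly the structure needed for the bound \eqref{48}.

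The key analytic point is that $B(u)-B_0(u)$ is a \emph{closed} $1$-form on $B_{\C^2\times l_c^2}(\delta)$: this is immediate because $d(B-B_0)=\tfrac12(\mathcal F^*\Omega-\Omega_0)$ and both $\Omega$ and $\Omega_0$ are closed (constant-coefficient, resp.\ of the stated diagonal form, symplectic forms), so the difference of primitives is closed. Then I would apply the Poincar\'e lemma in the Banach-space setting on the star-shaped domain $B_{\C^2\times l_{e,c}^{-a}}(\delta)$, integrating along rays: set $C(u):=\int_0^1 (B-B_0)(tu)[u]\,dt$ — this is the standard homotopy formula and gives $dC=B-B_0$ up to the part of $B-B_0$ that is \emph{not} in the range we want to absorb. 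Rather than absorbing everything into $dC$, I only integrate out the part of $B-B_0$ that does not already have the cross-term structure; the leftover, which I call $\Gamma=\sum_{j,A}F_{j,A}dz_{j,A}+\<F_\eta,d\eta\>$, retains the quantitative bound. The analyticity claims $F_\eta\in\mathcal R_{l_e^a}(a,\delta)$ and $F_{j,A}\in\mathcal R_\R(a,\delta)$ follow from the analyticity of $\mathcal F$, $\phi_j$, $\alpha_{j,A}$ together with the fact (used already in the proof of Proposition \ref{prop:expansion1}) that pairing an $l^2$-type quantity with an exponentially localized $\psi$-factor costs only a loss in the exponential weight $a_0\mapsto a_0/(2M+1)$, so one works with $a=a_D$ small enough.

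The main obstacle, as in the analogous Darboux step of \cite{MDNLS1} and \cite{CuMaAPDE}, is bookkeeping: one must organize the many monomials arising from $\Omega(\psi+\eta,d\psi X+d\eta X)$ so that every term is visibly either (i) at least quadratic in $(R[z]-1)\eta$ or linear in it times something $O(|z|^6)$ — giving the $|z|^6\|\eta\|_{l_e^{-a}}$ bound after one factor of $\eta$ is extracted — or (ii) a $z_1$-vs-$z_2$ mixed term with no $\eta$, which by $\tilde e_j,q_j=O(|z|^6)$ and by the structure $\phi_j(z_j)=z_j\phi_j+O(|z_j|^7)$ is $O(|z|^5|z_1z_2|)$; the pure self-interaction terms in each variable are precisely those captured by $B_0$ and cancel. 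Keeping track of which powers of $z_1,z_2$ appear, and checking that the cross terms never drop below the claimed order, is the delicate part; the closedness and Poincar\'e-lemma steps are then essentially formal. Once Lemma \ref{lem:prd1} is in hand, the diffeomorphism $\mathcal Y^D$ is produced by the Moser deformation (flow) argument with generating vector field determined by $\Gamma$, and \eqref{24} follows from \eqref{48} by Gronwall, but that is the content of the rest of Section \ref{sec:proofdar} and not of this lemma.
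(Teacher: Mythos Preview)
Your proposal contains a genuine error in the ``key analytic point.'' You claim that $B-B_0$ is a closed $1$-form because ``both $\Omega$ and $\Omega_0$ are closed\ldots\ so the difference of primitives is closed.'' This is false: $B$ is a primitive of $\mathcal F^*\Omega$ and $B_0$ is a primitive of $\Omega_0$, but these are \emph{different} $2$-forms, so $d(B-B_0)=\mathcal F^*\Omega-\Omega_0\neq 0$. (Indeed, if this difference vanished there would be nothing for the Darboux theorem to do.) Consequently the Poincar\'e homotopy formula cannot produce $C$ with $dC=B-B_0$, and your fallback ``only integrate out the part that does not already have the cross-term structure'' is not a well-defined operation: the Poincar\'e operator acts on closed forms, not on arbitrary $1$-forms, and there is no canonical way to split $B-B_0$ into a closed piece plus a small remainder without already doing the computation you are trying to avoid.

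The paper's proof is a direct computation with no appeal to Poincar\'e. One expands $2(B-B_0)$ as a sum of cross pairings $\Omega(\phi_1(z_1),d\phi_2(z_2))$, $\Omega(\phi_j(z_j),d\eta)$, $\Omega(\eta,d\phi_j(z_j))$, etc., and then repeatedly uses the Leibniz identity
\[
\Omega(a,db)=d\,\Omega(a,b)+\Omega(b,da)
\]
to peel off exact differentials. The terms $d\,\Omega(\cdot,\cdot)$ so produced are collected into $C$, and what remains is the explicit $1$-form in \eqref{49}. The bound \eqref{48} is then read off term by term: the pure $z_1$-$z_2$ cross terms such as $\Omega(z_1\phi_1,dq_2(z_2))$ contribute $O(|z|^5|z_1z_2|)$ (using $q_j=O(|z_j|^6)$ and the orthogonality $\<\phi_1,\phi_2\>=0$ to kill the leading order), while every term containing $\eta$ carries a factor $\alpha_{j,A}=O(|z|^6)$ or pairs $\eta$ against $d\phi_j$ via the orthogonality of $\mathcal H_c[z]$, giving $O(|z|^6\|\eta\|_{l_e^{-a}})$. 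Your order-counting in the last paragraph is essentially correct, but it only becomes a proof once $C$ has been constructed explicitly in this way; the abstract Poincar\'e step does not survive the failure of closedness.
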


\begin{proof}
First, since
\begin{align*}
B(u)=\frac 1 2 \Omega(\phi_1(z_1)+\phi_2(z_2)+\eta + \<\alpha_{k,B},\eta\>\phi_{k,B},d\phi_1(z_1)+d\phi_2(z_2)+d\eta+d(\<\alpha_{j,A},\eta\>)\phi_{j,A}),
\end{align*}
we have
\begin{align*}
2B(u)-2B_0(u)=&\Omega(\phi_1(z_1),d\phi_2(z_2)+d\eta + d\<\alpha_{j,A},\eta\>\phi_{j,A})\\&+
\Omega(\phi_2(z_2),d\phi_1(z_1)+d\eta + d\<\alpha_{j,A},\eta\>\phi_{j,A})\\&
+
\Omega(\eta,d\phi_1(z_1)+d\phi_2(z_2) + d\<\alpha_{j,A},\eta\>\phi_{j,A})\\&
+
\Omega(\<\alpha_{k,B},\eta\>\phi_{k,B},d\phi_1(z_1)+d\phi_2(z_2) +d\eta+ d\<\alpha_{j,A},\eta\>\phi_{j,A}).
\end{align*}
Now, notice that we have
\begin{align*}
\Omega(\phi_1(z_1),d\phi_2(z_2))=&d \Omega(\phi_1(z_1),\phi_2(z_2))+ \Omega(\phi_2(z_2),d\phi_1(z_1)),\\
\Omega(\phi_1(z_1)+\phi_2(z_2),d\eta)=&d \Omega(\phi_1(z_2)+\phi_2(z_2),\eta)+\Omega(\eta,d\phi_1(z_1)+d\phi_2(z_2)),\\
\Omega(\phi_1(z_1)+\phi_2(z_2)+\eta,d\<\alpha_{j,A},\eta\>\phi_{j,A})=&d\Omega(\phi_1(z_1)+\phi_2(z_2)+\eta,\<\alpha_{j,A},\eta\>\phi_{j,A})\\ &\quad+\<\alpha_{j,A},\eta\> \Omega(\phi_{j,A}, d\phi_1(z_1)+d\phi_2(z_2)+d\eta),
\end{align*}
and
\begin{align*}
\Omega(\phi_2(z),d\phi_1(z_1))&=\Omega(z_2\phi_2+q_2(z_2), \phi_1 dz_1 + d q_1(z_1))\\&
=\Omega(z_2\phi_2, d q_1(z_1))+\Omega(q_2(z_2),\phi_1 dz_1 + dq_1(z_1))\\&
=d \Omega(q_2(z_2),z_1\phi_1)+\Omega(z_1\phi_1, d q_2(z_2))+ \Omega(z_2\phi_2, d q_1(z_1))+ \Omega(q_2(z_2), d q_1(z_1)).
\end{align*}
Therefore, for
\begin{align*}
2C=& \Omega(\phi_1(z_1),\phi_2(z_2))+\Omega(\phi_1(z_2)+\phi_2(z_2),\eta)\\&+\Omega(\phi_1(z_1)+\phi_2(z_2)+\eta,\<\alpha_{j,A},\eta\>\phi_{j,A})+\Omega(q_2(z_2),z_1\phi_1),
\end{align*}
we have
\begin{align}
B(u)-B_0(u)-dC =& \Omega(\eta,d\phi_1(z_1)+d\phi_2(z_2))\label{49}\\&+\<\alpha_{j,A},\eta\> \Omega(\phi_{j,A}, d\phi_1(z_1)+d\phi_2(z_2)+d\eta)\nonumber\\&
+\frac 1 2 \Omega(\<\alpha_{k,B},\eta\>\phi_{k,B},d\<\alpha_{j,A},\eta\>\phi_{j,A})\nonumber\\&
+\frac 1 2 \(\Omega(z_1\phi_1, d q_2(z_2))+ \Omega(z_2\phi_2, d q_1(z_1))+ \Omega(q_2(z_2), d q_1(z_1))\).\nonumber
\end{align}
Setting $\Gamma=\mathrm{r.h.s.\ of}\ \eqref{49}$, we see that $F_\eta \in \mathcal R_{l_e^{a}}(a,\delta)$, $F_{j,A}\in \mathcal R_\R(a,\delta)$ and further \eqref{48} is satisfied.
\end{proof}

We set
\begin{align*}
\Omega_s=\Omega_0 + s(\Omega-\Omega_0).
\end{align*}
\begin{lemma}
Let $\delta>0$ sufficiently small.
Then, there exists $\mathcal X_\eta(z,\eta,\lambda,s)$, $\mathcal X_{j,A}(z,\eta,\lambda,s)$ s.t.
\begin{align}
&\mathcal X_\eta \in C^\omega(B_{\C^2\times l_{e,c}^{-a}}(0,\delta)\times B_{\R}(0,1)\times B_{\R}(0,2);l_{e,c}^a),\label{49.1}\\&\mathcal X_{j,A}\in C^\omega(B_{\C^2\times l_{e,c}^{-a}}(0,\delta)\times B_{\R}(0,1)\times B_{\R}(0,2);\R), \label{49.2}
\end{align}
s.t. $\mathcal X^s:=\sum_{j=1,2,A=R,I}\mathcal X_{j,A}(\cdot,\cdot,\cdot,s)\partial_{z_{j,A}}+\mathcal X_\eta (\cdot,\cdot,\cdot,s) \nabla_\eta$ satisfies $i_{\mathcal X^s} \Omega_s = -\Gamma$.
Further, we have
\begin{align}\label{49.3}
\|\mathcal X_\eta\|_{l_e^a}+\sum_{j=1,2,A=R,I}|\mathcal X _{j,A}|\lesssim |z|^5 |z_1z_2|+|z|^6\|\eta\|_{l_e^{-a}}.
\end{align}
\end{lemma}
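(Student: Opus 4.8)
The plan is to construct the vector field $\mathcal X^s$ by inverting the symplectic form $\Omega_s$ on the one-form $-\Gamma$, uniformly in the parameter $s\in[0,1]$ (or the slightly larger interval $B_\R(0,2)$ to retain analyticity room). The starting point is that $\Omega_0$ is already diagonal with respect to the coordinates $(z_1,z_2,\eta)$, so on the model form the equation $i_{\mathcal X}\Omega_0 = -\Gamma$ with $\Gamma = \sum_{j,A}F_{j,A}dz_{j,A}+\langle F_\eta, d\eta\rangle$ can be solved explicitly: the $\eta$-component is $\mathcal X_\eta = -\im (1)^{-1}(\text{something involving }F_\eta)$ (mimicking \eqref{26}) and the $z_{j,A}$ components are obtained by inverting the $2\times 2$ blocks $\frac{\im}{2}(1+\gamma_j)dz_j\wedge d\bar z_j$ as in \eqref{25}. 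For general $s$, $\Omega_s = \Omega_0 + s(\Omega-\Omega_0)$ differs from $\Omega_0$ by the cross terms that were computed in Lemma \ref{lem:prd1} (those terms are exactly the ones in $\Gamma$ up to the exact differential $dC$, and they are small, of size $O(|z|^5)$ by \eqref{48}). So $\Omega_s$ is a small bounded perturbation of the invertible $\Omega_0$, and I would invert it by a Neumann series.

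Concretely, I would first check that $\Omega_s$, viewed as a map from the tangent space (coordinatized by $(\mathcal X_{1,R},\mathcal X_{1,I},\mathcal X_{2,R},\mathcal X_{2,I},\mathcal X_\eta)\in \R^4\times l_{e,c}^a$) to the cotangent space, is of the form $\mathcal A_0 + s\mathcal B$ where $\mathcal A_0$ is boundedly invertible (with bound uniform on $B_{\C^2\times l_{e,c}^{-a}}(0,\delta)$) and $\|\mathcal B\|\lesssim |z|$; this uses the structure of $\Omega-\Omega_0$ coming from the proof of Lemma \ref{lem:prd1} together with the bounds on $\alpha_{j,A}$, $q_j$ from Proposition \ref{prop:1} and Lemma \ref{lem:2}. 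Shrinking $\delta$ so that $\sup_{s\in B_\R(0,2)}\|s\,\mathcal A_0^{-1}\mathcal B\|\leq 1/2$, the operator $\mathcal A_0+s\mathcal B$ is invertible with inverse $\sum_{n\geq 0}(-1)^n(\mathcal A_0^{-1}\mathcal B)^n \mathcal A_0^{-1}$, which depends analytically (indeed polynomially, term by term) on $(z,\eta,s)$ since $\mathcal A_0^{-1}$, $\mathcal B$ do. This yields $\mathcal X^s := -(\mathcal A_0+s\mathcal B)^{-1}\Gamma$ solving $i_{\mathcal X^s}\Omega_s = -\Gamma$, and the class memberships \eqref{49.1}, \eqref{49.2} follow from composing analytic maps, while the estimate \eqref{49.3} follows because $\|\mathcal X^s\|\lesssim \|\mathcal A_0^{-1}\|\,\|\Gamma\| \lesssim \|\Gamma\|$ and $\|\Gamma\| = \|F_\eta\|_{l_e^a}+\sum_{j,A}|F_{j,A}|\lesssim |z|^5|z_1z_2|+|z|^6\|\eta\|_{l_e^{-a}}$ by \eqref{48}.

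The main obstacle I anticipate is the bookkeeping of function spaces in the inversion step: $\Omega_s$ pairs $l_{e,c}^a$ with $l_{e,c}^{-a}$ objects, and one must be careful that $\mathcal A_0^{-1}$ genuinely maps the relevant cotangent space (built on $l_{e,c}^{-a}$) back into the tangent space (built on $l_{e,c}^a$) boundedly, rather than losing exponential weight; the gain is available precisely because $F_\eta$ lives in $l_e^a$ (not merely $l_e^{-a}$), so the weighted structure closes. A secondary point is verifying uniformity of all bounds over $s\in B_\R(0,2)$ and analyticity in $s$, but since $\Omega_s$ is affine in $s$ this is routine once the Neumann series converges. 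Everything else — the explicit form of $\mathcal A_0^{-1}$, the polynomial estimates on $\mathcal B$ — is a direct computation in the spirit of \cite{MDNLS1} and \cite{CuMaAPDE}.
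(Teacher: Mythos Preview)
Your proposal is correct and follows essentially the same approach as the paper: both invert $\Omega_s=\Omega_0+s(\Omega-\Omega_0)$ on $-\Gamma$ by treating the $s$-term as a small perturbation and summing a Neumann series, with the size estimate \eqref{49.3} inherited directly from \eqref{48}. The only organizational difference is that the paper writes out the system componentwise (obtaining one equation for $\mathcal X_\eta$ and one for the $\mathcal X_{j,A}$'s), solves the $\eta$-equation first for fixed $\mathcal X_{k,B}$ by Neumann series, and then substitutes back to solve the finite-dimensional block---a two-step inversion rather than your single-step Neumann series on the full operator; the content is the same.
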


\begin{proof}
We directly solve
\begin{align*}
\Omega_0(\mathcal X^s,\cdot)+s\(\Omega(\mathcal X^s,\cdot)-\Omega_0(\mathcal X^s,\cdot)\)=-\Gamma.
\end{align*}
First,
\begin{align*}
\Omega_0(\mathcal X^s,Y)=\Omega(D_{j,B}\phi_j(z_j),D_{j,A}\phi_{j}(z_j)) \mathcal X_{j,B} Y_{j,A}+ \Omega(\mathcal X_\eta,Y_\eta).
\end{align*}
Next, since $\Omega-\Omega_0=d \Gamma$, we have
\begin{align*}
&\Omega(\mathcal X^s,Y)-\Omega_0(\mathcal X^s,Y)=\(D_{k,B}F_{j,A}-D_{j,A}F_{k,B}\) \mathcal X_{k,B}Y_{j,A} + \<\nabla_\eta F_{j,A}, \mathcal X_\eta\>Y_{j,A}-\<\nabla_\eta F_{k,B}, Y_\eta\>\mathcal X_{k,B}\\&\quad
+\<D_{k,B}F_\eta,Y_\eta\>\mathcal X_{k,B}-\<D_{j,A}F_\eta,\mathcal X_\eta\>Y_{j,A}+\<d_\eta F_\eta(\mathcal X_\eta),Y_\eta\>-\<d_\eta F_\eta (Y_\eta),\mathcal X_\eta\>
\end{align*}
Therefore, we have
\begin{align}
&\im \mathcal X_\eta + s\(-\mathcal X_{k,B}\nabla_\eta F_{k,B}+\mathcal X_{k,B}D_{k,B}F_\eta + d_\eta F_\eta(X_\eta)-(d_\eta F_\eta)^* \mathcal X_\eta\)=-F_\eta,\label{49.4}\\&
\Omega(D_{j,B}\phi_j(z_j),D_{j,A}\phi_j(z_j))\mathcal X_{j,B}+s\((D_{k,B}F_{j,A}-D_{j,A}F_{k,B}) \mathcal X_{k,B}+\<\nabla_\eta F_{j,A}, \mathcal X_\eta\>-\<D_{j,A}F_\eta,\mathcal X_\eta\>\)\nonumber\\&=-F_{j,A}.\label{49.5}
\end{align}
First, for fixed $\mathcal X_{k,B}$, we can solve \eqref{49.4} by Neumann series.
Notice that the solution $\mathcal X_\eta$ becomes analytic w.r.t.\ $z,\eta,\lambda,s$ and $\mathcal X_{k,B}$.
Next, since $\Omega(D_{j,B}\phi_j(z_j),D_{j,A}\phi_j(z_j))$ is invertible, we can solve \eqref{49.5} again by Neumann series.
Therefore, we obtain $\mathcal X_{j,A}$ and $\mathcal X_\eta$ which satisfies \eqref{49.1}, \eqref{49.2} and \eqref{49.3}.
\end{proof}

We now consider the following system
\begin{align}
&\frac{\partial}{\partial s} r_z (z,\eta,\lambda,s)=\mathcal X_z(z+r_z,\eta+r_\eta,\lambda,s),\label{50}\\&
\frac{\partial }{\partial s} r_\eta (z,\eta,\lambda,s)=\mathcal X_\eta(z+r_z,\eta+r_\eta,\lambda,s),\label{51}
\end{align}
with the initial condition $(r_z,r_\eta)=(0,0)$, where $$\mathcal X_z=(\mathcal X_{1,R}+\im \mathcal X_{1,I},\mathcal X_{2,R}+\im \mathcal X_{2,I})\in C^\omega(B_{\C^2\times l_{e,c}^{-a}}(0,\delta)\times B_{\R}(0,1)\times B_{\R}(0,2);\C^2).$$

\begin{lemma}
Let $\delta>0$ sufficiently small.
Then, there exists $$(r_z,r_\eta)\in C^\omega(B_{\C^2\times P_cl_e^{-a}}(0,\delta)  ;C([0,1];\C^2\times l_{e,c}^a)),$$ s.t.
$(r_z(z,\eta,\lambda,\cdot),r_\eta(z,\eta,\lambda,\cdot))$ is the solution of system \eqref{50}--\eqref{51} and
\begin{align*}
&|r_{z}(z,\eta,\lambda,1)|+
\|r_\eta(z,\eta,\lambda,1)\|_{l_e^{a}}\lesssim |z|^{5}|z_1z_2|+|z|^{6}\|\eta\|_{l_e^{-a}}.
\end{align*}

\end{lemma}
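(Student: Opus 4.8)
The plan is to solve the system \eqref{50}--\eqref{51} by the standard method of characteristics / Picard iteration for the ODE flow, exploiting the smallness of the vector field $\mathcal X^s$ established in \eqref{49.3}. First I would fix $(z,\eta,\lambda)$ in $B_{\C^2\times P_cl_e^{-a}}(0,\delta)$, regard \eqref{50}--\eqref{51} as an ODE in the time variable $s\in[0,1]$ valued in the Banach space $\C^2\times l_{e,c}^a$, and observe that the right-hand side $(z,\eta,s)\mapsto(\mathcal X_z(z+r_z,\eta+r_\eta,\lambda,s),\mathcal X_\eta(z+r_z,\eta+r_\eta,\lambda,s))$ is real analytic on the relevant domain by \eqref{49.1}--\eqref{49.2}. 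Hence by the abstract Cauchy--Kovalevskaya / analytic Picard--Lindel\"of theorem, for $\delta$ small enough there is a unique local solution $(r_z(z,\eta,\lambda,\cdot),r_\eta(z,\eta,\lambda,\cdot))$, analytic jointly in $(z,\eta,\lambda,s)$, as long as $z+r_z$ and $\eta+r_\eta$ stay in the domain of definition of $\mathcal X^s$.

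Next I would propagate the quantitative bound. Along the flow, using \eqref{49.3}, one has
\begin{align*}
\frac{\partial}{\partial s}\(|r_z|+\|r_\eta\|_{l_e^a}\)&\lesssim |z+r_z|^5|(z_1+r_{z,1})(z_2+r_{z,2})| + |z+r_z|^6\|\eta+r_\eta\|_{l_e^{-a}}.
\end{align*}
As long as $|r_z|\lesssim|z|$ and $\|r_\eta\|_{l_e^a}\lesssim\|\eta\|_{l_e^{-a}}$ (which a bootstrap/continuation argument will guarantee on $[0,1]$ for $\delta$ small, since the increment is of order $|z|^5$ times the already-small quantities), the right-hand side is bounded by a constant times $|z|^5|z_1z_2|+|z|^6\|\eta\|_{l_e^{-a}}$ uniformly in $s\in[0,1]$; integrating from $s=0$ to $s=1$ with the initial condition $(r_z,r_\eta)|_{s=0}=(0,0)$ gives exactly
\begin{align*}
|r_z(z,\eta,\lambda,1)| + \|r_\eta(z,\eta,\lambda,1)\|_{l_e^a}\lesssim |z|^5|z_1z_2| + |z|^6\|\eta\|_{l_e^{-a}},
\end{align*}
which is the claimed estimate. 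One should note that $z_1z_2$ is the key factor tracking the ``cross term'' smallness: the $F_{j,A}$, $F_\eta$ built in Lemma \ref{lem:prd1} carry this factor, so the flow preserves it to leading order (the correction $r_z$ is higher order and does not spoil the $|z_1z_2|$ weight).

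The main obstacle I expect is not the existence of the flow itself but the care needed to keep everything in the correct scale of spaces: the vector field $\mathcal X_\eta$ maps into $l_{e,c}^a$ but depends on $\eta$ only through $l_{e,c}^{-a}$, so one must check that the Picard iteration closes consistently in the pair of spaces $(l_e^a, l_e^{-a})$, i.e.\ that $\eta+r_\eta$ with $r_\eta\in l_e^a$ still lies in the (larger) space $l_e^{-a}$ where $\mathcal X$ is defined, and that the composition $\mathcal X(z+r_z,\eta+r_\eta,\cdot)$ is well-defined and analytic — this is where the inclusion $l_e^a\hookrightarrow l_e^{-a}$ and the uniform-in-$s$ bounds \eqref{49.1}--\eqref{49.3} are used. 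A secondary technical point is the joint analyticity in $(z,\eta,\lambda)$ of the flow at time $s=1$: this follows from analytic dependence of solutions of analytic ODEs on parameters and initial data, but one must phrase it in the Banach-space-valued setting ($C^\omega$ as defined in the introduction). Once these bookkeeping issues are handled, the proof is a routine Gronwall-type estimate, and setting $z^D=z+r_z(z,\eta,\lambda,1)$, $\eta^D=\eta+r_\eta(z,\eta,\lambda,1)$ yields the map $\mathcal Y^D$ of Proposition \ref{prop:dar} with the bound \eqref{24}.
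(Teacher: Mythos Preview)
Your approach is correct and essentially the same as the paper's: the paper phrases the existence step as an application of the implicit function theorem to the integral operator $\Phi(z,\eta,\lambda,w,\xi)(s)=(w,\xi)(s)-\int_0^s(\mathcal X_z,\mathcal X_\eta)(z+w,\eta+\xi,\lambda,\tau)\,d\tau$, whereas you phrase it as analytic Picard--Lindel\"of plus a bootstrap/Gronwall estimate, but these are equivalent formulations of the same fixed-point argument. Your treatment of the quantitative bound is in fact more explicit than the paper's (which leaves the estimate implicit in the IFT step), and your remark about closing the iteration in the pair $(l_e^a,l_e^{-a})$ via the embedding $l_e^a\hookrightarrow l_e^{-a}$ is exactly the point that makes the argument work.
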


\begin{proof}
We solve \eqref{50}-\eqref{51} by implicit function theorem.

First, for $(w,\xi)\in C([0,1];B_{\C^2\times l_{e,c}^{-a}}(0,\delta))$, set
\begin{align*}
\Phi(z,\eta,\lambda,w,\xi)(s):=(\Phi_z(z,\eta,\lambda,w,\xi)(s),\Phi_\eta(z,\eta,\lambda,w,\xi)(s)),
\end{align*}
where
\begin{align*}
&\Phi_z(z,\eta,\lambda,w,\xi)(s)=w(s)-\int_0^s \mathcal X_z(z+w(\tau),\eta+\xi(\tau),\lambda,\tau)\,d \tau,\\&
\Phi_\eta(z,\eta,\lambda,w,\xi)(s)=\xi(s)-\int_0^s \mathcal X_\eta(z+w(\tau),\eta+\xi(\tau),\lambda,\tau)\,d \tau.
\end{align*}
Notice that $\Phi\in C^\omega(B_{\C^2\times l_{e,c}^{-a}}(0,\delta)  \times B_{C([0,1];\C^2\times l_{e,c}^{-a})};C([0,1];\C^2\times l_{e,c}^{a}))$.
Then, by implicit function theorem, we can show there exist $(x_1(z,\eta,\lambda)(s),x_2(z,\eta,\lambda)(s),\eta(z,\eta,\lambda)(s))$ which satisfies $\Phi=0$.
\end{proof}

\subsection{Proof of Birkhoff normal form (Proposition \ref{prop:birk})}\label{sec:proofbirk}

We prove Proposition \ref{prop:birk} by induction of $M$.
The proof of Proposition \ref{prop:birk} is similar to the proof of Theorem 5.9 of \cite{CuMaAPDE}.
The aim here is to erase the nonresonant terms in the energy expansion.

Before, going in to the induction argument, we introduce some notations.
Let $\N_0=\{0\}\cup\N$.
For $\mathbf m =(\mu,\nu)=(\mu_1,\mu_2,\nu_1,\nu_2) \in \N_0^2\times \N_0^2$ and $z\in \C^2$, we set
\begin{align*}
\mathbf Z^{\mathbf m}:=\mathbf Z^{\mathbf m}(z):=z^\mu\bar z^\nu:=z_1^{\mu_1}z_2^{\mu_2}\bar z_1^{\nu_1}\bar z_2^{\nu_2},
\end{align*}
and $\bar{\mathbf{m}}=(\nu,\mu)$.
\begin{remark}
We have $\overline{\mathbf Z^{\mathbf m}}=\mathbf Z^{\overline{\mathbf m}}$.
\end{remark}
We further set $\delta_{i,j}$ is the usual Kronecker delta, $|\mu|=\mu_1+\mu_2$ for $\mu=(\mu_1,\mu_2)\in\N_0^2$, $\mathbf e=(e_1,e_2)$ and $\mathbf e\cdot (\mu-\nu)=e_1(\mu_1-\nu_1)+e_2(\mu_2-\nu_2)$.
We redefine the resonant set by
\begin{align*}
\mathbf M(k)&:=\{\mathbf m \in  \N_0^2\times \N_0^2\ |\ |\mu|=|\nu|=k\},\\
\mathbf M(k,j)&:=\{\mathbf m \in  \N_0^2\times \N_0^2\ |\ (\mu_1-\delta_{1j},\mu_2-\delta_{2j},\nu_1,\nu_2)\in \mathbf M(k)\},\\
\mathbf R(k)&:=\{\mathbf m \in \mathbf M(k)\ |\ \mu_1=\nu_1\},\quad \quad \ 
\mathbf R(k,j):=\{\mathbf M(k,j)\ |\ 0<\mathbf e\cdot (\mu-\nu)<4 \},\\
\mathbf {NR}(k)&:=\mathbf M(k)\setminus \mathbf R(k),\quad\quad\quad\quad\quad\quad\ \ 
\mathbf {NR}(k,j):=\mathbf M(k,j)\setminus \mathbf R(k,j).
\end{align*}

\begin{remark}
If $\mathbf m=(\mu,\nu)\in \mathbf R(k)$, we automatically have $\mu_2=\nu_2$ because $|\mu|=|\nu|$.
\end{remark}

\begin{remark}
If $\mathbf m=(\mu,\nu)\in \mathbf{NR}(k)$, we automatically have $\mathbf e\cdot(\mu-\nu)\neq 0$.
This is because if $(\mu,\nu)\in \mathbf{NR}(k)$, then we have $\mu_1-\nu_1=-(\mu_2-\nu_2)\neq 0$.
So, $\mathbf e\cdot(\mu-\nu)=(e_2-e_1)(\mu_2-\nu_2)\neq 0$.
However, this is in some sense special for the two eigenvalue case.
When we have three or more eigenvalues, we need to assume an additional nonresonance condition such as (H3) of \cite{CuMaAPDE}.
\end{remark}

\begin{remark}
If $l\in R(k)$ (where the definition of $R(k)$ is given in \eqref{28}), then there exists a corresponding $\mathbf m \in \mathbf R(k)$ s.t.\ $Z^{k-l}\bar Z^{l}=\mathbf Z^{\mathbf m}$ and vise versa, where $Z$ is defined in \eqref{def:Z}.
Similarly, if $l\in R(k,j)$ (where the definition of $R(k)$ is given in \eqref{29}), then there exists a corresponding $\mathbf m\in \mathbf R(k,j)$ s.t.\ $z_jZ^{k-l}\bar Z^l=\mathbf Z^{\mathbf m}$ and the inverse also holds.
\end{remark}

To prove Proposition \ref{prop:birk}, it suffices to prove the following proposition.

\begin{proposition}\label{prop:Mth}
Let $M\geq 2$ and assume that for $a,\delta>0$, 
 there exist $C_{\mathbf m, 0}^{M-1}\in C^\omega(B_{\R}(0,1);\C)$, $C_{\mathbf m, j}^{M-1}\in C^\omega(B_{\R}(0,\delta^2)\times B_{\R}(0,1);\C)$ and $G_{\mathbf m, j}^{M-1}\in C^\omega(B_{\R}(0,\delta^2)\times B_{\R}(0,1);l_{e,c}^a)$ s.t.
\begin{align}
&E^{M-1}(z,\eta   )=E_1(|z_1|^2   )+E_2(|z_2|^2   )+E(\eta   )\label{50.03}\\&+\sum_{2\leq k\leq M-1}\sum_{j=0,1,2}\sum_{\mathbf m \in  \mathbf R(k)}C_{\mathbf m,j}^{M-1}(|z_j|^2   ) \mathbf Z^{\mathbf m}+\sum_{1\leq k\leq M-2}\sum_{j=1,2}\sum_{\mathbf m \in \mathbf R(k,j)}\<\mathbf Z^{\mathbf m}G_{\mathbf m,j}^{M-1}(|z_j|^2   ),  \eta\>\nonumber\\&+\sum_{k\geq M} \sum_{j=0,1,2} \sum_{\mathbf m \in \mathbf M( k)}C_{\mathbf m,j}^{M-1}(|z_j|^2   )\mathbf Z^{\mathbf m} + \sum_{k\geq M-1}\sum_{j=1,2}\sum_{\mathbf m \in \mathbf M(k,j)}\<\mathbf Z^{\mathbf m}G_{\mathbf m,j}^{M-1}(|z_j|^2   ),\eta\>\nonumber\\&+ \mathcal R^{M-1}(z,\eta   ),\nonumber
\end{align}
where $\mathcal R^{M-1}\in R_{\R}(a,\delta)$ and
\begin{align}\label{50.02}
|\mathcal R^{M-1}|\lesssim |z|\(|z|+\|\eta\|_{l_e^{-a}}\)\|\eta\|_{l_e^{-a}}^2,
\end{align}
$C^{M-1}_{\overline{\mathbf m},j}=\overline{C^{M-1}_{\mathbf m,j}}$ and $C^{M-1}_{\mathbf m,j}(0   )=0$ for $j=1,2$.
Then, there exist $a',\delta'>0$ s.t.\ there exists $\tilde{\mathcal Y}^M\in \mathcal R_{l_e^{a'}}$ s.t.\ $\mathcal Y^M =\mathrm{Id}+\tilde{\mathcal Y}^M$ is a canonical change of coordinate  (i.e.\ $(\mathcal Y^M)^* \Omega_0 = \Omega_0$) and $\tilde{\mathcal Y}^M$ satisfies 
\begin{align}\label{51.0}
|(\tilde {\mathcal Y}^M)^*z| + \|(\tilde{\mathcal Y}^M)^*\eta\|_{l_e^a}\lesssim |z|\(\|\eta\|_{l_e^{-a}}+|z_1z_2|\),
\end{align}
and $E^M:=(\mathcal Y^M)^* E^{M-1}$ has the expansion \eqref{50.03} with $M-1$ replaced to $M$ and $a,\delta$ replaced to $a', \delta'$.
\end{proposition}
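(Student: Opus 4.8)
The plan is to carry out the $M$-th step of a Lie-transform normal-form scheme, following Theorem 5.9 of \cite{CuMaAPDE}. I would take $\mathcal Y^M$ to be the time-one flow, with respect to $\Omega_0$, of the Hamiltonian vector field $X_{\chi^M}$ generated by
\begin{align*}
\chi^M(z,\eta)=\sum_{j=0,1,2}\ \sum_{\mathbf m\in\mathbf{NR}(M)}b_{\mathbf m,j}(|z_j|^2)\,\mathbf Z^{\mathbf m}\ +\ \sum_{j=1,2}\ \sum_{\mathbf m\in\mathbf{NR}(M-1,j)}\<\mathbf Z^{\mathbf m}S_{\mathbf m,j}(|z_j|^2),\eta\>,
\end{align*}
where the scalar coefficients $b_{\mathbf m,j}$ are analytic $\C$-valued and the $S_{\mathbf m,j}$ are analytic $l^{a'}_{e,c}$-valued, symmetrized over $\mathbf m\leftrightarrow\overline{\mathbf m}$ so that $\chi^M$ is real (consistently with $C^{M-1}_{\overline{\mathbf m},j}=\overline{C^{M-1}_{\mathbf m,j}}$ and the matching relation for $G^{M-1}_{\mathbf m,j}$). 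Since $\mathbf{NR}(M)$ forces $|\mu|=|\nu|=M$ and $\mathbf{NR}(M-1,j)$ forces $|\mu|+|\nu|=2M-1$, every monomial of $\chi^M$ has degree $\ge 2M\ge 4$ (counting $\eta$ with degree $1$), so $X_{\chi^M}$ vanishes to order $\ge 3$ at the origin; hence its flow is defined and remains within a slightly smaller ball on $[0,1]$, $(\mathcal Y^M)^*\Omega_0=\Omega_0$ automatically, and $\tilde{\mathcal Y}^M\in\mathcal R_{l_e^{a'}}(a',\delta')$ because $\nabla_\eta\chi^M$ is $l^{a'}_{e,c}$-valued and the $z$-components of $X_{\chi^M}$ are analytic.

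Writing $E^{M-1}=Z_0+\mathcal E$ with $Z_0:=\tfrac12 e_1|z_1|^2+\tfrac12 e_2|z_2|^2+\tfrac12\<H\eta,\eta\>$, the Lie expansion gives $E^M=E^{M-1}\circ\mathcal Y^M=E^{M-1}+\{Z_0,\chi^M\}+(\{\mathcal E,\chi^M\}+\tfrac12\{\{E^{M-1},\chi^M\},\chi^M\}+\cdots)$, where every term in parentheses is of strictly higher order in $|z_1z_2|$ or carries an extra power of $\eta$. Using \eqref{25}--\eqref{26} one finds that $\{Z_0,\chi^M\}$ equals, monomial by monomial, $-\im(\be\cdot(\nu-\mu)+O(|z|^6))\,b_{\mathbf m,j}\,\mathbf Z^{\mathbf m}$ from the scalar part of $\chi^M$ and $\<\mathbf Z^{\mathbf m}(H-\be\cdot(\mu-\nu)+O(|z|^6))(\pm\im)S_{\mathbf m,j},\eta\>$ from the $\eta$-linear part, the $O(|z|^6)$ corrections coming from the factors $1+\tilde\gamma_j$ in $\Omega_0$ and from the non-quadratic part of $Z_0$ pushed into $\mathcal E$. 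Demanding that these cancel the nonresonant order-$M$ scalar terms and the nonresonant order-$(M-1)$ $\eta$-linear terms of \eqref{50.03} reduces the construction to two homological equations:
\begin{align*}
\im(\be\cdot(\nu-\mu)+O(|z|^6))\,b_{\mathbf m,j}=-C^{M-1}_{\mathbf m,j},\qquad (H-\be\cdot(\mu-\nu)+O(|z|^6))\,S_{\mathbf m,j}=\mp\im\,G^{M-1}_{\mathbf m,j}.
\end{align*}
The first is solved by dividing by $\be\cdot(\nu-\mu)$, which is nonzero for all $\mathbf m\in\mathbf{NR}(M)$ (the Remark after the definition of $\mathbf{NR}(k)$), and a Neumann series then absorbs the analytic $O(|z|^6)$ perturbation, exactly as in the Darboux step; this produces analytic $b_{\mathbf m,j}$. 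For the second, writing $p=\mu_1-\nu_1$ one computes $\be\cdot(\mu-\nu)=\omega_{1-p}$, and membership in $\mathbf{NR}(M-1,j)$ is precisely the condition $\omega_{1-p}\notin(0,4)$; combined with the standing assumption $\omega_n\ne 0,4$ this gives $\be\cdot(\mu-\nu)\notin[0,4]=\sigma_{\mathrm{ess}}(H)=\sigma(\left.H\right|_{l^2_c})$, so $H-\be\cdot(\mu-\nu)$ is boundedly invertible on $l^2_c$; since its resolvent kernel decays exponentially away from the spectrum (Combes--Thomas), it maps $l^a_{e,c}$ into $l^{a'}_{e,c}$ for some $0<a'\le a$, and a Neumann series again handles the $O(|z|^6)$ correction, producing analytic $l^{a'}_{e,c}$-valued $S_{\mathbf m,j}$.

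With $\chi^M$ so chosen, the nonresonant order-$M$ scalar and order-$(M-1)$ $\eta$-linear terms cancel against $\{Z_0,\chi^M\}$, the resonant terms of lower order are left untouched (all corrections being of order $\ge M$, resp.\ $\ge M-1$), and every remaining contribution of the Lie series is either a higher-order scalar or $\eta$-linear term---which, after peeling off powers of $|z_j|^2$, redefines $C^M_{\mathbf m,j},G^M_{\mathbf m,j}$ and preserves $C^M_{\overline{\mathbf m},j}=\overline{C^M_{\mathbf m,j}}$ and $C^M_{\mathbf m,j}(0)=0$ for $j=1,2$---or a term at least quadratic in $\eta$, absorbed into $\mathcal R^M$; a check of powers shows $\mathcal R^M$ still satisfies \eqref{50.02}. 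Finally, \eqref{51.0} follows from the structure of $\chi^M$: by the bound-state-invariance argument used for the Darboux coordinate in the proof of Proposition \ref{prop:expansionD} (after Darboux the motion of $z_2$, resp.\ $z_1$, resp.\ $\eta$, is governed by $\partial_{\bar z_2}$, resp.\ $\partial_{\bar z_1}$, resp.\ $\nabla_\eta$, of the Hamiltonian), the ``bound-state tangent'' monomials---those whose presence would render $\{z_2=0,\eta=0\}$ or $\{z_1=0,\eta=0\}$ non-invariant---are absent from $E^{M-1}$, hence from $\chi^M$; consequently each monomial of $\chi^M$ carries, after extracting the $|z_j|^2$ factors, a factor $Z$ or $\bar Z$ (of modulus $|z_1z_2|$) or a power of $\eta$, so each component of $X_{\chi^M}$ is $\lesssim|z|(\|\eta\|_{l_e^{-a}}+|z_1z_2|)$, and Gronwall propagates this along the flow. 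I expect the main obstacle to be twofold: conceptually, the solvability and weighted-space mapping property of the $\eta$-linear homological equation, the one place where the two-eigenvalue structure is essential (that $\be\cdot(\mu-\nu)$ is some $\omega_n$ lying outside $[0,4]$ for nonresonant indices, which uses $\omega_n\ne 0,4$); technically, organizing the Lie expansion so that the new remainder retains the precise form \eqref{50.02}. The rest is the standard but lengthy Lie-transform bookkeeping of \cite{CuMaAPDE}.
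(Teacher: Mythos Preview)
Your proposal is correct and follows essentially the same scheme as the paper: take $\mathcal Y^M$ to be the time-one flow of $X_{\chi^M}$ for exactly the auxiliary Hamiltonian you write down, and solve the two homological equations using invertibility of $\mathbf e\cdot(\mu-\nu)$ on $\mathbf{NR}(M)$ and of $H-\mathbf e\cdot(\mu-\nu)$ on $l^2_c$ for $\mathbf m\in\mathbf{NR}(M-1,j)$.

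The only noteworthy difference is one of packaging. You organize the computation as a Lie series $E^{M-1}+\{Z_0,\chi^M\}+\cdots$ and absorb the $|z_j|^2$-dependence of the coefficients (and the $1+\tilde\gamma_j$ factors) into an $O(|z|^6)$ perturbation dealt with by a Neumann series. The paper instead freezes $\rho=(|z_1|^2,|z_2|^2)$ as an independent parameter, writes the vector field as $W(z,\eta,\rho,b,B)+Y$, compares the true flow $(r_z,r_\eta)$ to the frozen flow $(w_z,w_\eta)$ via Lemmas~\ref{lem:prb1}--\ref{lem:prb4}, and then solves for the coefficients $b_{\mathbf m,j},B_{\mathbf m,j}$ (for $j=1,2$) by the implicit function theorem rather than a Neumann series. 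Both routes are equivalent here since the correction terms are small and linear in the unknowns; the paper's freezing device just makes the bookkeeping of which terms land at which order more explicit. Your observation that one needs Combes--Thomas to keep the resolvent $(H-\mathbf e\cdot(\mu-\nu))^{-1}$ acting on exponentially weighted spaces is a point the paper leaves implicit.
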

\begin{remark}
	
In \eqref{50.03}, $C^{M-1}_{\mathbf m, j}(|z_j|^2   )$ with $j=0$ means $C^{M-1}_{\mathbf m, j}  $.
\end{remark}

\begin{remark}
$\mathcal Y^M$ in Proposition \ref{prop:birk} will correspond to $\mathcal Y^2\circ \cdots \circ \mathcal Y^M$, where the latter $\mathcal Y^k$'s is the one given in Proposition \ref{prop:Mth}.
\end{remark}

We will construct $\mathcal Y^M$ in Proposition \ref{prop:Mth} by Hamiltonian vector flow of some auxiliary Hamiltonian.
Therefore, the task will be to construct the auxiliary Hamiltonian to erase the terms
\begin{align*}
\sum_{j=0,1,2} \sum_{\mathbf m \in \mathbf {NR}( M)}C_{\mathbf m,j}^{M-1}(|z_j|^2   )\mathbf Z^{\mathbf m} + \sum_{j=1,2}\sum_{\mathbf m \in \mathbf {NR}(M-1,j)}\<\mathbf Z^{\mathbf m}G_{\mathbf m,j}^{M-1}(|z_j|^2   ),\eta\>.
\end{align*}

Before getting in the details of the proof, we explain the basic strategy of the proof and the role of the nonresonace condition.
We first explain how to erase 
\begin{align*}
C^{M-1}_{\mathbf m_0,0}\mathbf Z^{\mathbf m_0}+\overline{C^{M-1}_{\mathbf m_0,0}}\mathbf Z^{\overline{\mathbf m_0}}+\sum_{j=1,2} \<\mathbf Z^{\mathbf m_j}G^{M-1}_{\mathbf m_j,j},\eta\>,\quad \mathbf m_0\in \mathbf{NR}(M),\ \mathbf m_j\in \mathbf{NR}(M-1,j),
\end{align*}
with $G^{M-1}_{\mathbf m_j,j}$ not depending of $z_j$.
We set the auxiliary Hamiltonian as
\begin{align*}
\chi=b^{M-1}_{\mathbf m_0,0}\mathbf Z^{\mathbf m_0}+\overline{b^{M-1}_{\mathbf m_0,0}}\mathbf Z^{\overline{\mathbf m_0}}+\sum_{j=1,2} \<\mathbf Z^{\mathbf m_j}B^{M-1}_{\mathbf m_j,j},\eta\>.
\end{align*}
Then, the canonical change of coordinate $(z_1,z_2,\eta)\mapsto (z_1+r_1,z_2+r_2,\eta+r_\eta)$ induced by the Hamilton vector field $X_\chi$ will satisfy
\begin{align*}
 r_k\sim  (X_\chi)_{z_k} =-2\im   \partial_{\bar z_k}\chi=& -2\im\(\nu_{0,k}b_{\mathbf m_0,0}^{M-1}\frac{\mathbf Z^{\mathbf m_0}}{\bar z_k}+\mu_{0,k}\overline{b_{\mathbf m_0,0}^{M-1}}\frac{\mathbf Z^{\overline{\mathbf m_0}}}{\bar z_k}\)\\&\quad\quad-\im \sum_{j=1,2} \(\(\nu_{j,k}\frac{\mathbf Z^{\mathbf m}}{\bar z_k} B_{\mathbf m_j,j},\eta\)+\(\mu_{j,k} \frac{\mathbf Z^{\bar{\mathbf m}}}{\bar z_k} \bar B_{\mathbf m_j,j},\bar \eta\)\)\\
 r_\eta \sim  (X_\chi)_\eta =&-\im \nabla_\eta \chi= -\im\sum_{j=1,2} \mathbf Z^{\mathbf m_j}B_{\mathbf m_j,j}^{M-1},
\end{align*}
where $\mathbf m_j=(\mu_j,\nu_j)=(\mu_{j,1},\mu_{j,2},\nu_{j,1},\nu_{j,2})$.
Substituting this into the quadratic part of the energy, we have
\begin{align*}
&\frac12 \sum_{k=1,2}e_k|z_k+r_k|^2+\frac12\<H(\eta+r_\eta),\eta+r_\eta\>=\frac12 \sum_{k=1,2}e_k|z_k|^2+\frac12\<H\eta,\eta\>\\&+\Re\(2\im \mathbf e\cdot(\mu_{0}-\nu_{0})b_{\mathbf m_0,0}^{M-1}\mathbf Z^{\mathbf m_0}\)-\sum_{j=1,2}\<\mathbf Z^{\mathbf m_j}  \im \(H-\mathbf e\cdot (\mu_{j}-\nu_{j})\)B_{\mathbf m_j,j},\eta\>+h.o.t.
\end{align*}
where $h.o.t.$ are the higher order terms.
Thus, if we set
\begin{align*}
\im \mathbf e\cdot(\mu_{0}-\nu_{0})b_{\mathbf m_0,0}^{M-1}=-C^{M-1}_{\mathbf m_0,0}\text{ and }\im \(H-\mathbf e\cdot (\mu_{j}-\nu_{j})\)B_{\mathbf m_j,j}=-G^{M-1}_{\mathbf m_j,j}, 
\end{align*}
then these terms will cancel with the terms which we wanted to erase.
It is now clear that the nonresonance condition enables us to solve the above equations.

We will now go in to the detail of the proof.
Although the basic strategy is simple as above, the actual proof will be involved because $C_{\mathbf m,j}^{M-1}$ and $G_{\mathbf m,j}^{M-1}$ depends on $z_j$ and we have to erase them at once.
To do so, we will use implicit function theorem.
Also, we will have to estimate the error of the time one mapping of the Hamilton vector flow.

As explained above we will consider the auxiliary Hamiltonian in the form
\begin{align*}
\chi(z,\eta,b,B   )=\sum_{j=0,1,2} \sum_{\mathbf m\in \mathbf{NR}(M)}b_{\mathbf m,j}(|z_j|^2   )\mathbf Z^{\mathbf m} +  \sum_{j=1,2} \sum_{\mathbf m \in \mathbf{NR}(j,M-1)}\<\mathbf Z^{\mathbf m}B_{\mathbf m,j}(|z_j|^2   ),\eta\>,
\end{align*}
where $b_{\mathbf m,0}\in C^\omega(B_\R(0,1);\C)$, $b_{\mathbf m,j}\in C^\omega(B_{\R}(0,\delta^2)  ;\C)$ and $B_{\mathbf m,j}\in C^\omega(B_{\R}(0,\delta^2)  ;l_{e,c}^{-a})$, we set
$b_{\mathbf m,0}(|z_0|^2   )=b_{\mathbf m,0}  $ and $b_{\bar{\mathbf m},j}=\bar{b}_{\mathbf m,j}$.
Then, by \eqref{25} and \eqref{26}, we see that the Hamiltonian vector field $X_\chi$ is given by
\begin{align*}
&(X_\chi)_k(z,\eta   )=W_k(z,\eta,\rho(z), b(\rho(z)   ), B(\rho(z)   )   )+ Y_j(z,\eta   ),\\&
(X_\chi)_\eta(z,\eta   )=W_\eta(z,\eta,\rho(z), b(\rho(z)   ), B(\rho(z)   ),
\end{align*}
where $\rho(z)=(|z_1|^2,|z_2|^2)$, $b(\rho   )=\{b_{\mathbf m,j}(\rho_j   )\}_{j=0,1,2,\mathbf m\in\mathbf{NR}(M)}$ and\\ $B(\rho   )=\{B_{\mathbf m,j}(\rho_j   )\}_{j=1,2,\mathbf m\in \mathbf{NR}(M-1,j)}$.
\begin{align*}
W_k(z,\eta,\rho,b,B   )=&-2\im (1+\tilde \gamma(\rho_k))\left[\sum_{j=0,1,2} \sum_{\mathbf m\in \mathbf {NR}(M)}\nu_k b_{ \mathbf m, j}\frac{\mathbf Z^{\mathbf m}}{\bar z_k}\right.\\&\quad\quad \left.+\frac 1 2 \sum_{j=1,2} \sum_{\mathbf m\in \mathbf {NR}(j,M-1)}\(\nu_k\frac{\mathbf Z^{\mathbf m}}{\bar z_k} B_{ \mathbf m, j},\eta\)+\(\mu_k \frac{\mathbf Z^{\bar{\mathbf m}}}{\bar z_k} \bar B_{ \mathbf m, j},\bar \eta\)\right],\\
W_\eta(z,\eta,\rho,b,B   )=&-\im \sum_{j=1,2} \sum_{\mathbf m\in \mathbf{NR}(j,M-1)}\mathbf Z^{\mathbf m}B_{ \mathbf m, j},
\end{align*}
with $\rho=(\rho_1,\rho_2)$ and
\begin{align*}
 Y_k(z,\eta   )=&-2\im (1+\tilde \gamma(|z_k|^2))\left[\sum_{\mathbf m\in \mathbf {NR}(M)} b_{ \mathbf m, k}'(|z_k|^2   )z_k\mathbf Z^{\mathbf m}\right.\\&\quad\quad \left.+\frac 1 2 \sum_{\mathbf m\in \mathbf {NR}(k,M-1)}\(z_k\mathbf Z^{\mathbf m}B_{ \mathbf m, k}'(|z_k|^2   ),\eta\)+\(z_k\mathbf Z^{\bar{\mathbf m}} \bar B_{ \mathbf m, k}'(|z_k|^2   ),\bar \eta\)\right],
\end{align*}
Notice that we have 
\begin{align}\label{51}
Y_k \bar z_k+ Y_{\bar k}z_k=0.
\end{align}
Further, we have
\begin{align*}
&\mathbf W_k(z,\eta,\rho,b,B   ):=z_kW_{\bar k}(z,\eta,\rho,b,B   )+\bar z_k W_k(z,\eta,\rho,b,B   ) =\\& 2 (1+\tilde \gamma(\rho_k))(\mu_k-\nu_k)\left[\im\sum_{j=0,1,2} \sum_{\mathbf m \in \mathbf {NR}(M)} b_{ \mathbf m, j}\mathbf Z^{\mathbf m}+\sum_{j=1,2} \sum_{\mathbf m\in \mathbf {NR}(j,M-1)}\< \im \mathbf Z^{\mathbf m} B_{ \mathbf m, j}, \eta\>\right].
\end{align*}

We set $(r_z(z,\eta   )(s),r_\eta(z,\eta   )(s))=(r_z(s),r_\eta(s))=(r_1(s),r_2(s),r_\eta(s))$ to be a solution of 
\begin{align*}
&\frac{d}{ds}(z_k+r_k(s))=(X_{\chi})_k(z+r_z(s),\eta+r_\eta(s)   )\\&\frac{d}{ds}(\eta+r_\eta(s))=(X_{\chi})_\eta(z+r_z(s),\eta+r_\eta(s)   ),
\end{align*}
with $(r_z(0),r_\eta(0))=(0,0)$.
Equivalently, we are setting $(r_z(s),r_\eta(s))$ to be the solution of
\begin{align}
&r_z(z,\eta   )(s)=\int_0^s (X_{\chi})_z(z+r_z(\tau),\eta+r_\eta(\tau)   )\,d\tau,\label{52}\\&
r_\eta(z,\eta   )(s)=\int_0^s (X_{\chi})_\eta(z+r_z(\tau),\eta+r_\eta(\tau)   )\,d\tau.\label{53}
\end{align}
We set
\begin{align*}
(r_z(z,\eta   ),r_\eta(z,\eta   )):=(r_z(z,\eta   )(1),r_\eta(z,\eta   )(1)).
\end{align*}

By standard argument, we have the following lemma.
\begin{lemma}\label{lem:prb1}
Let $\delta>0$ sufficiently small.
Then, there exists $$(r_z,r_\eta)\in C^\omega(B_{\C^2\times l_{e,c}^{-a}}(0,\delta)  ;C([0,1];\C^2\times l_{e,c}^a)),$$ s.t.
$(r_z(z,\eta   )(s),r_\eta(z,\eta+\lambda)(s))$ is the solution of system \eqref{52}--\eqref{53} and
\begin{align}
&|r_{z}(z,\eta   )|+\|r_\eta(z,\eta   )\|_{l_e^{a}}\lesssim |z||z_1z_2|^{M-1}+|z|^2 |z_1z_2|^{M-2}\|\eta\|_{l_e^{-a}},\nonumber\\&
|z+r_z(z,\eta   )|^2-|z|^2\lesssim |z_1z_2|^{M}+|z| |z_1z_2|^{M-1}\|\eta\|_{l_e^{-a}}.\label{55.1}
\end{align}
\end{lemma}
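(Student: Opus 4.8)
The plan is to run the usual implicit-function-theorem-plus-bootstrap scheme, just as for the analogous lemma in Section~\ref{sec:proofdar}; the one new ingredient is the sharp vanishing order of $X_\chi$, which I would record first. Every $\mathbf m\in\mathbf{NR}(M)$ has $\mu_1\neq\nu_1$ (hence also $\mu_2\neq\nu_2$), and since the terms in \eqref{50.03} are indexed by monomials carrying no $|z_1|^2$ or $|z_2|^2$ factor, the only such $\mathbf m\in\mathbf{NR}(M)$ are $(M,0,0,M)$ and $(0,M,M,0)$, while the only such element of $\mathbf{NR}(M-1,j)$ is $(M,0,0,M-1)$ for $j=1$ and $(0,M,M-1,0)$ for $j=2$; each surviving coefficient $b_{\mathbf m,j}$, $B_{\mathbf m,j}$ in $\chi$ is one of these monomials times an analytic function of $|z_j|^2$, with $B_{\mathbf m,j}$ exponentially localized. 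Hence, up to bounded analytic factors (the $\tilde\gamma_k$'s included), every monomial of $(X_\chi)_z$ is a $C$-type term $\mathbf Z^{\mathbf m}/\bar z_k$ with $|\mathbf Z^{\mathbf m}|\lesssim|z_1z_2|^{M}$ — the division legitimate because the prefactor $\nu_k$ or $\mu_k$ makes it a genuine polynomial — or an $\eta$-type term $\mathbf Z^{\mathbf m}/\bar z_k$ times $(\,\cdot\,,\eta)$ with $|\mathbf Z^{\mathbf m}|\lesssim|z_1|^M|z_2|^{M-1}$ (plus strictly higher-order pieces coming from $Y_k$), while every monomial of $(X_\chi)_\eta$ is $\mathbf Z^{\mathbf m}B_{\mathbf m,j}$ with $|\mathbf Z^{\mathbf m}|\lesssim|z_1|^M|z_2|^{M-1}$. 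Dividing by $|z_k|$ and using $|z_1z_2|\le\tfrac12|z|^2$ then gives, on $B_{\C^2\times l_{e,c}^{-a}}(0,\delta)$,
\begin{align}\label{prb-bd}
|(X_\chi)_z(z,\eta)|\lesssim |z|\,|z_1z_2|^{M-1}+|z|^2|z_1z_2|^{M-2}\|\eta\|_{l_e^{-a}},\qquad \|(X_\chi)_\eta(z,\eta)\|_{l_e^{a}}\lesssim |z|\,|z_1z_2|^{M-1},
\end{align}
the gain of weight from $-a$ to $a$ reflecting the exponential localization of the $B_{\mathbf m,j}$.

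For the flow I would argue exactly as in Section~\ref{sec:proofdar}: for $(w,\xi)$ ranging over a small ball of curves in $C([0,1];\C^2\times l_{e,c}^{a})$, set
\begin{align*}
\Phi(z,\eta,w,\xi)(s):=\Bigl(w(s)-\int_0^s(X_\chi)_z(z+w(\tau),\eta+\xi(\tau))\,d\tau,\ \xi(s)-\int_0^s(X_\chi)_\eta(z+w(\tau),\eta+\xi(\tau))\,d\tau\Bigr),
\end{align*}
which by \eqref{prb-bd} is real analytic, vanishes at the origin, and has $\partial_{(w,\xi)}\Phi(0,0,0,0)=\mathrm{Id}$ because $X_\chi$ vanishes to order $\ge 2M-1\ge 3$ there; the implicit function theorem then produces $(r_z,r_\eta)\in C^\omega(B_{\C^2\times l_{e,c}^{-a}}(0,\delta);C([0,1];\C^2\times l_{e,c}^{a}))$ solving \eqref{52}--\eqref{53} with zero value at $s=0$, and one takes the time-$1$ values as the desired $(r_z,r_\eta)$. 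The first estimate of Lemma~\ref{lem:prb1} then follows by bootstrapping \eqref{52}--\eqref{53}: assuming $|r_z(s)|+\|r_\eta(s)\|_{l_e^{a}}\le K\bigl(|z|\,|z_1z_2|^{M-1}+|z|^2|z_1z_2|^{M-2}\|\eta\|_{l_e^{-a}}\bigr)$ on $[0,s]$, one has for $\delta$ small that $|z+r_z(\tau)|\sim|z|$ and $|(z+r_z)_1(z+r_z)_2|\lesssim|z_1z_2|+(\text{higher order})$, so inserting \eqref{prb-bd} into the integrals reproduces that bound with an absorbable constant.

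For the estimate \eqref{55.1} I would differentiate along the flow: writing $w(s)=z+r_z(s)$, $\xi(s)=\eta+r_\eta(s)$, a direct computation using the identity $Y_k\bar z_k+Y_{\bar k}z_k=0$ and the definition of $\mathbf W_k$ gives $\tfrac{d}{ds}|w_k(s)|^2=\mathbf W_k(w(s),\xi(s))$. Each term of $\mathbf W_k$ carries the factor $(\mu_k-\nu_k)$; for the $C$-type terms $(\mu_1-\nu_1)+(\mu_2-\nu_2)=0$, so in $\mathbf W_1+\mathbf W_2$ they survive only through the $\tilde\gamma_k$-weighted parts, of size $\lesssim|z_1z_2|^{M}$, whereas for the $\eta$-type terms $(\mu_1-\nu_1)+(\mu_2-\nu_2)=1$ and they contribute $\lesssim|z|\,|z_1z_2|^{M-1}\|\xi\|$; since $\|\xi\|\lesssim\|\eta\|_{l_e^{-a}}+|z|\,|z_1z_2|^{M-1}$, the whole of $\mathbf W_1+\mathbf W_2$ is $\lesssim|z_1z_2|^{M}+|z|\,|z_1z_2|^{M-1}\|\eta\|_{l_e^{-a}}$, and integrating over $[0,1]$ (recall $r_z=0$ at $s=0$) yields $|z+r_z|^2-|z|^2\lesssim|z_1z_2|^{M}+|z|\,|z_1z_2|^{M-1}\|\eta\|_{l_e^{-a}}$. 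The part I expect to be the real work is the first paragraph — isolating the relevant nonresonant monomials and their degrees so that \eqref{prb-bd} is sharp — together with checking that the bootstrap closes, i.e.\ that $|z+r_z|\sim|z|$ and $|(z+r_z)_1(z+r_z)_2|\lesssim|z_1z_2|+\text{h.o.t.}$ genuinely hold along the whole flow; the remaining steps are routine.
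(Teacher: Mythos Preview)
Your overall scheme matches the paper's: existence via the implicit function theorem (the paper calls this ``standard argument''), the first line of the estimate from the vanishing order of $X_\chi$, and \eqref{55.1} by writing
\[
|z_k+r_k(s)|^2-|z_k|^2=\int_0^s\Bigl((z_k+r_k)(X_\chi)_{\bar k}+\overline{(z_k+r_k)}(X_\chi)_k\Bigr)\,d\tau
=\int_0^s\mathbf W_k\,d\tau,
\]
where the $Y_k$-pieces drop out by the identity $Y_k\bar z_k+Y_{\bar k}z_k=0$. That is exactly the paper's proof.

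One correction: your enumeration of the nonresonant monomials is wrong. The set $\mathbf{NR}(M)$ consists of \emph{all} $\mathbf m=(\mu_1,\mu_2,\nu_1,\nu_2)$ with $\mu_1+\mu_2=\nu_1+\nu_2=M$ and $\mu_1\neq\nu_1$, not only $(M,0,0,M)$ and $(0,M,M,0)$; likewise $\mathbf{NR}(M-1,j)$ is larger than a single element. What you are really using (and what makes \eqref{prb-bd} true) is that every $\mathbf m$ actually entering $\chi$ comes from the $Z^l\bar Z^{k-l}$ structure of the expansion, so $\mu_1+\nu_1=\mu_2+\nu_2$ and hence $|\mathbf Z^{\mathbf m}|=|z_1z_2|^M$ (respectively $|z|\,|z_1z_2|^{M-1}$ for the $\eta$-terms). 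State it that way rather than miscounting the index set.

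Finally, the cancellation you invoke for \eqref{55.1}---that in $\mathbf W_1+\mathbf W_2$ the $C$-type terms survive only through the $\tilde\gamma_k$-weighted parts because $(\mu_1-\nu_1)+(\mu_2-\nu_2)=0$---is unnecessary. Each $\mathbf W_k$ by itself already satisfies $|\mathbf W_k|\lesssim|z_1z_2|^M+|z|\,|z_1z_2|^{M-1}\|\eta\|_{l_e^{-a}}$ once you know $|\mathbf Z^{\mathbf m}|=|z_1z_2|^M$, so the paper simply integrates and stops there.
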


\begin{proof}
We only prove \eqref{55.1}.
\begin{align*}
|z_k+r_k|^2-|z_k|^2=\int_0^1 \frac{d}{ds} |z_k+r_k(z,\eta,\lambda,s)|^2\,ds=\int_0^1 \((z_k+r_k)W_{\bar k}+\overline{(z_k+r_k)}W_k\)\,ds.
\end{align*}
Therefore, we have the conclusion.
\end{proof}

We set $w_z$ and $w_\eta$ to be the solution of the following integral equation.
\begin{align}
&w_z(z,\eta,\rho,b,B   )(s)=\int_0^s W_z(z+w_z(\tau),\eta+w_\eta(\tau),\rho,b,B   )\,d\tau,\label{54}\\
&w_\eta(z,\eta,\rho,b,B   )(s)=\int_0^s W_\eta(z+w_z(\tau),\eta+w_\eta(\tau),\rho,b,B   )\,d\tau.\label{55}
\end{align}
The existence of such $w_z$, $w_\eta$ are standard.
We set $w_z(z,\eta,\rho,b,B   ):=w_z(z,\eta,\rho,b,B   )(1)$ and $w_\eta(z,\eta,\rho,b,B   ):=w_\eta(z,\eta,\rho,b,B   )(1)$

We set
\begin{align*}
X:=\C^A\times (l_{e,c}^a)^B, 
\end{align*}
where
$A=\sharp\{(\mathbf m,j)\in \mathbf{NR}(M)\times \{0,1,2\}\}$ and $B=\sharp\{(\mathbf m, j)\ |\ j=1,2,\ \mathbf m\in \mathbf{NR}(M-1,j)\}$.

The contribution of $w_z$ and $w_\eta$ are given by the following, which can be obtained by mere substitution.
\begin{lemma}\label{lem:prb3}
For $R>0$, there exist $\delta>0$ s.t. there exists $$(w_z,w_\eta)\in C^\omega(B_{\C^2\times l_{e,c}^{-a}}(0,\delta)\times B_{\R^2}(0,\delta^2)\times B_X(0,R)  ;C([0,1];\C^2\times l_{e,c}^a)),$$ s.t.
$(w_z(z,\eta,\rho,b,B   )(s),r_\eta(z,\eta,\rho,b,B   )(s))$ is the solution of system \eqref{54}--\eqref{55} for $$(z,\eta,\rho,b,B   )\in B_{\C^2\times l_{e,c}^{-a}}(0,\delta)\times B_{\R^2}(0,\delta^2)\times B_X(0,R)  $$ and
\begin{align*}
&|w_{z}(z,\eta,\rho,b,B   )|+\|w_\eta(z,\eta,\rho,b,B   )\|_{l_e^{a}}\lesssim |z||z_1z_2|+|z|^2 \|\eta\|_{l_e^{-a}}.
\end{align*}
\end{lemma}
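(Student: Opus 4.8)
The statement is a standard existence-and-size result for the parametrised integral system \eqref{54}--\eqref{55}, and I would prove it exactly as Lemma~\ref{lem:prb1} was proved for $(r_z,r_\eta)$, now carrying $(\rho,b,B)$ along as extra real-analytic parameters on the fixed set $B_{\R^2}(0,\delta^2)\times B_X(0,R)$. First I would set up the fixed point: on the Banach space $C([0,1];\C^2\times l_{e,c}^a)$ define
\[
\mathcal T_{z,\eta,\rho,b,B}(w_z,w_\eta)(s):=\Big(\textstyle\int_0^s W_z(z+w_z(\tau),\eta+w_\eta(\tau),\rho,b,B)\,d\tau,\ \int_0^s W_\eta(z+w_z(\tau),\eta+w_\eta(\tau),\rho,b,B)\,d\tau\Big).
\]
The explicit formulas for $W_z,W_\eta$ exhibit them as polynomials in $(z,\bar z)$, linear in $(\eta,b,B)$, and real analytic in $\rho$ through the factors $1+\tilde\gamma(\rho_k)$; moreover every monomial carries at least one power of $z$ (since $|\mu|\ge 1$ for $\mathbf m\in\mathbf{NR}(M)\cup\mathbf{NR}(M-1,j)$), so $W_z,W_\eta$ vanish at $z=0$. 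Hence, for $\delta$ small (depending on the fixed $R$), $\mathcal T$ maps a small ball of $C([0,1];\C^2\times l_{e,c}^a)$ into itself and is a contraction there, uniformly over $B_{\C^2\times l_{e,c}^{-a}}(0,\delta)\times B_{\R^2}(0,\delta^2)\times B_X(0,R)$; this yields a unique solution $(w_z,w_\eta)$, whose real analyticity in all variables follows either from the analytic implicit function theorem applied locally together with uniqueness, or from uniform estimates plus Cauchy estimates.

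For the size bound, I would start from $|w_z(1)|+\|w_\eta(1)\|_{l_e^a}\lesssim \sup_{s\in[0,1]}\big(|W_z|+\|W_\eta\|_{l_e^a}\big)$ evaluated along the solution, and, after shrinking $\delta$ so that the solution itself satisfies $|w_z(s)|+\|w_\eta(s)\|_{l_e^a}\le|z|$, reduce to estimating $W_z,W_\eta$ at arguments $(z+w_z,\eta+w_\eta,\rho,b,B)$ with $|w_z|,\|w_\eta\|_{l_e^a}\le |z|$ and $\|(b,B)\|_X\le R$. Here one uses the structure of the occurring monomials: for $\mathbf m\in\mathbf{NR}(M)$ one has $\mu_1\ne\nu_1$ and $|\mu|=|\nu|=M$, so after the derivative $\partial_{\bar z_k}$ the monomial $\mathbf Z^{\mathbf m}/\bar z_k$ still contains a factor $z_1z_2$ and has remaining degree $2M-3\ge 1$; for $\mathbf m\in\mathbf{NR}(M-1,j)$ the monomial $\mathbf Z^{\mathbf m}$ has degree $2M-1\ge 3$, so the $\eta$-pairings in $W_k$ and the term $\mathbf Z^{\mathbf m}B_{\mathbf m,j}$ in $W_\eta$ are bounded by $|z|^2\|\eta\|_{l_e^{-a}}$ and by $|z||z_1z_2|$ respectively. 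Summing the finitely many terms, absorbing the $w_z,w_\eta$-contributions (which only produce harmless higher powers of $|z|$), and using $|1+\tilde\gamma(\rho_k)|\lesssim 1$, one gets $|W_z|+\|W_\eta\|_{l_e^a}\lesssim |z||z_1z_2|+|z|^2\|\eta\|_{l_e^{-a}}$, hence the claimed estimate for $(w_z,w_\eta)$.

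The only genuinely delicate step is the power counting in the second paragraph, i.e.\ keeping track of which multi-indices $\mathbf m$ and which coefficients actually occur so that the factor $z_1z_2$ is really available after $\partial_{\bar z_k}$; this is where the nonresonance encoded in $\mathbf{NR}(M)$, $\mathbf{NR}(M-1,j)$, together with the structural and vanishing properties inherited from the energy expansion \eqref{50.03}, enter. Everything else---writing $W_z,W_\eta$ out via \eqref{25}--\eqref{26}, solving the time-one flow, and substituting---is exactly the ``mere substitution'' referred to in the statement.
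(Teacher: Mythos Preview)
Your approach---contraction/fixed point for existence and analyticity, then direct estimation of $W_z,W_\eta$ for the size bound---is exactly what the paper intends (its own proof is the single phrase ``mere substitution''). The existence and analyticity portion is fine.

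The power-counting step, however, has a gap. You assert that for $\mathbf m\in\mathbf{NR}(M)$ the monomial $\mathbf Z^{\mathbf m}/\bar z_k$ ``still contains a factor $z_1z_2$''. Nonresonance only says $\mu_1\ne\nu_1$ (hence $\mu_2\ne\nu_2$); it does not guarantee that both variables survive a $\partial_{\bar z_k}$. Take $M\ge 2$ and $\mathbf m=(0,M,1,M-1)\in\mathbf{NR}(M)$: then $\nu_1=1$ and $\mathbf Z^{\mathbf m}/\bar z_1=z_2^{M}\bar z_2^{\,M-1}$ carries no $z_1$ at all, so the corresponding term in $W_1$ has size $|z_2|^{2M-1}$, which is not $\lesssim|z|\,|z_1z_2|$. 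The same happens in $W_\eta$ with $\mathbf m=(0,M,0,M-1)\in\mathbf{NR}(M-1,2)$. Thus for \emph{generic} $(b,B)\in B_X(0,R)$ the stated bound cannot hold; what your argument actually yields is $|w_z|+\|w_\eta\|_{l_e^a}\lesssim|z|^{2M-1}+|z|^{2M-2}\|\eta\|_{l_e^{-a}}$, and this weaker estimate is in fact all that is needed downstream. The sharper $|z_1z_2|$ factor is available only once one restricts to the multi-indices that actually occur in the energy---those inherited from the $Z^l\bar Z^{k-l}$ structure in \eqref{14}, which automatically carry $|z_1|^k|z_2|^k$---but that structure is not part of the hypotheses of the lemma as stated, so your appeal to ``structural and vanishing properties inherited from the energy expansion'' cannot close the argument here.
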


\begin{lemma}\label{lem:prb4}
We have
\begin{align}
&|r_z(z,\eta,\lambda)-w_z(z,\eta,\lambda)|+\|r_\eta(z,\eta,\lambda)-w_\eta(z,\eta,\lambda)\|_{l_e^{a}}\lesssim |z| |z_1z_2|^{M}+|z|^2 |z_1z_2|^{M-1}\|\eta\|_{l_e^{-a}},\label{56}\\
&|z_k+r_k|^2-|z_k+w_k|^2\lesssim |z| |z_1z_2|^{M+1}+|z|^2 |z_1z_2|^M\|\eta\|_{l_e^a}+|z|^3 |z_1z_2|^{M-1}\|\eta\|_{l_e^a}^2.\label{57}
\end{align}
\end{lemma}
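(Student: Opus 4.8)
The plan is to obtain \eqref{56} and \eqref{57} by subtracting the two integral equations \eqref{52}--\eqref{53} and \eqref{54}--\eqref{55} and closing the difference with a Gronwall argument. The two flows differ in only two respects: first, the vector field generating $(r_z,r_\eta)$ is $((X_\chi)_z,(X_\chi)_\eta)=(W_z+Y_z,\,W_\eta)$ whereas $(w_z,w_\eta)$ is generated by $(W_z,W_\eta)$ alone; second, along the $r$-flow the parameters are slaved to the running point, $\rho=\rho(z+r_z(\tau))$, $b=b(\rho(z+r_z(\tau)))$, $B=B(\rho(z+r_z(\tau)))$, while $(w_z,w_\eta)$ is evaluated at the frozen values $\rho=\rho(z)=(|z_1|^2,|z_2|^2)$, $b=b(\rho(z))$, $B=B(\rho(z))$. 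Setting $d_z:=r_z-w_z$, $d_\eta:=r_\eta-w_\eta$ and inserting and removing $W_z(z+r_z,\eta+r_\eta;\ \text{frozen parameters})$ (and likewise for $W_\eta$), I would split the integrand of $d_z$ into three pieces: a Lipschitz difference $W_z(z+r_z,\eta+r_\eta;\text{frozen})-W_z(z+w_z,\eta+w_\eta;\text{frozen})$, a parameter-drift difference $W_z(z+r_z,\eta+r_\eta;\text{running})-W_z(z+r_z,\eta+r_\eta;\text{frozen})$, and the extra term $Y_z(z+r_z,\eta+r_\eta)$; for $d_\eta$ the $Y$-piece is absent.

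The Lipschitz piece is $\lesssim(\text{Lip constant of }W)\cdot(|d_z|+\|d_\eta\|_{l_e^a})$, and after integration this is exactly the term one feeds to Gronwall; the Lipschitz constant is small because every monomial $\mathbf Z^{\mathbf m}$ occurring in $W_z,W_\eta$ is one of the balanced ones (a power of $Z,\bar Z$ times factors $|z_j|^2$) with $\mathbf m\in\mathbf{NR}(M)$ or $\mathbf m\in\mathbf{NR}(M-1,j)$, so that $|\mathbf Z^{\mathbf m}|\lesssim|z_1z_2|^M$, resp.\ $|\mathbf Z^{\mathbf m}|\lesssim|z|\,|z_1z_2|^{M-1}$. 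For the parameter-drift piece I would invoke \eqref{55.1}: the running $\rho$ has moved by $|\rho(z+r_z)-\rho(z)|=\big||z_k+r_k|^2-|z_k|^2\big|\lesssim|z_1z_2|^M+|z|\,|z_1z_2|^{M-1}\|\eta\|_{l_e^{-a}}$, and since $b_{\mathbf m,j},B_{\mathbf m,j}$ are analytic in $\rho$ and vanish at $\rho=0$ for $j=1,2$ (using $C^{M-1}_{\mathbf m,j}(0)=0$) while $\tilde\gamma$ is analytic with $\tilde\gamma(0)=0$, the same bound controls the drift of $b$, of $B$ and of $\tilde\gamma(\rho_k)$; as $W_z$ is linear in $(b,B)$, multiplying the drifts by the corresponding partial derivatives of $W_z$ leaves only higher order terms (of size $\lesssim|z_1z_2|^{2M-1}$ plus $\|\eta\|$-weighted analogues), all below the right-hand side of \eqref{56}. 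Finally $Y_z$ is, by its explicit form, $(1+\tilde\gamma)$ times the $z_k$-weighted sum of the same nonresonant monomials with $b',B'$ in place of $b,B$, hence $|Y_z|\lesssim|z|\,|z_1z_2|^M+|z|^2|z_1z_2|^{M-1}\|\eta\|_{l_e^{-a}}$. Integrating over $s\in[0,1]$ and running Gronwall then gives \eqref{56}.

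For \eqref{57} I would not insert \eqref{56} directly, since that would lose a power of $|z_1z_2|$. Instead, from $\frac{d}{ds}|z_k+r_k|^2=2\,\Re\big(\overline{(z_k+r_k)}\,(X_\chi)_k(z+r_z,\eta+r_\eta)\big)$ and the cancellation identity $Y_k\bar z_k+Y_{\bar k}z_k=0$ evaluated at the point $z+r_z$, the $Y$-contribution drops out, so
\begin{align*}
|z_k+r_k|^2-|z_k+w_k|^2=\int_0^1\big(\mathbf W_k(z+r_z,\eta+r_\eta;\rho(z+r_z))-\mathbf W_k(z+w_z,\eta+w_\eta;\rho(z))\big)\,ds,
\end{align*}
where the semicolon indicates that $b,B$ are taken at the displayed $\rho$. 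Now
$\mathbf W_k=2(1+\tilde\gamma(\rho_k))(\mu_k-\nu_k)\big(\im\sum_{\mathbf m}b_{\mathbf m,j}\mathbf Z^{\mathbf m}+\sum_{\mathbf m}\langle\im\mathbf Z^{\mathbf m}B_{\mathbf m,j},\eta\rangle\big)$
carries the full monomial $\mathbf Z^{\mathbf m}$ rather than $\mathbf Z^{\mathbf m}/\bar z_k$, so $|\mathbf W_k|\lesssim|z_1z_2|^M+|z|\,|z_1z_2|^{M-1}\|\eta\|_{l_e^{-a}}$, i.e.\ one order higher than $W_k$. Splitting the integrand into its first-argument drift (controlled by \eqref{56}), its $(b,B)$-drift and its $\tilde\gamma(\rho_k)$-drift (both controlled by \eqref{55.1}), each contribution acquires this extra factor of $|z_1z_2|$; using $M\ge2$ to convert surplus powers of $|z_1z_2|$ into powers of $|z|$ where the right-hand side of \eqref{57} demands it, one reads off \eqref{57}.

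The main obstacle is the bookkeeping in the last two paragraphs: one must correctly identify which monomials are balanced so that $|\mathbf Z^{\mathbf m}|\lesssim|z_1z_2|^M$ holds on $\mathbf{NR}(M)$ and $|\mathbf Z^{\mathbf m}|\lesssim|z|\,|z_1z_2|^{M-1}$ on $\mathbf{NR}(M-1,j)$, and one must verify that the $(b,B)$- and $\tilde\gamma$-drifts, multiplied by the derivatives of $W_z$ (respectively by the prefactor of $\mathbf W_k$), really land below the stated right-hand sides; the Gronwall closure and the existence and analyticity of the flows $(r_z,r_\eta)$ and $(w_z,w_\eta)$ themselves are routine (they are precisely Lemmas \ref{lem:prb1} and \ref{lem:prb3}).
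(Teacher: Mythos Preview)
Your proposal is correct and follows essentially the same approach as the paper: decompose $r_z-w_z$ into the $Y_k$-integral plus the $W_k$-difference, bound $Y_k$ directly, and for the $W_k$-difference absorb the $(r-w)$-Lipschitz piece into the left-hand side (your Gronwall) while controlling the $\rho$-drift via \eqref{55.1}. Your sketch of \eqref{57} via the cancellation $Y_k\bar z_k+Y_{\bar k}z_k=0$ and the combined quantity $\mathbf W_k$ is exactly the argument the paper sets up but then explicitly skips, so you have filled in what the paper omits.
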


\begin{proof}
We have
\begin{align*}
r_k-w_k=&\int_0^1  Y_k(z_z+r_z(s),\eta+r_\eta(s)   )\,ds \\&+ \int_0^1 \(W_k(z_k+r_k(s),\eta+r_\eta(s),\rho(z+r_z)   )-W_k(z+w_z(s),\eta+w_\eta(s),\rho(z)   )\)\,ds.
\end{align*}
The first integral can be bounded by $|z| |z_1z_2|^{M}+|z|^2 |z_1z_2|^{M-1}\|\eta\|_{l_e^{-a}}$.
For the second integral, using Taylor expansion again, the terms with $r_k-w_k$ or $r_\eta-w_\eta$ can be absorbed in the l.h.s.\ of \eqref{56}.
The for the term with $\rho(z+r_z)-\rho(z)$ can be bounded using Lemma \ref{lem:prb1}.
Therefore, we have \eqref{56}.
We skip the proof of \eqref{57}.
\end{proof}

By lemmas \ref{lem:prb3} and \ref{lem:prb4}, we see that the only part which affects the terms with $\mathbf m\in \mathbf{NR}(M)$ or $\mathbf m \in \mathbf{NR}(M-1,j)$ in the expansion of $E^{M-1}(z+r_z,\eta+r_\eta   )$ will be $w_z$ and $w_\eta$.

\begin{lemma}
We have
\begin{align*}
&|z_k+w_k|^2-|z|^2=2 (\mu_k-\nu_k)\left[\im\sum_{j=1,2} \sum_{\mathbf m \in \mathbf {NR}(2)} b_{\mathbf m,j}\mathbf Z^{\mathbf m}+\sum_{j=1,2} \sum_{\mathbf m\in \mathbf {NR}(1,j)}\< \im \mathbf Z^{\mathbf m} B_{ \mathbf m, j}, \eta\>\right]\\&
+\sum_{j=1,2}\sum_{\mathbf m \in \mathbf M(M)}c_{\mathbf m,j}\(|z_j|^2,\{ b_{\mathbf n, j}(|z_j|^2   ) \}_{\mathbf n \in \mathrm{NR}(M)},\{ B_{\mathbf n,j}(|z_j|^2   )\}_{\mathbf n\in \mathbf{NR}(M-1,j)}   \)\mathbf Z^{\mathbf m} \\&+\sum_{j=1,2}\sum_{\mathbf m\in \mathbf M(M-1,j)}\< \mathbf Z^{\mathbf m}g_{\mathbf m,j}\(|z_j|^2,\{ b_{\mathbf n, j}(|z_j|^2   ) \}_{\mathbf n \in \mathrm{NR}(M)},\{ B_{\mathbf n,j}(|z_j|^2   )\}_{\mathbf n\in \mathbf{NR}(M-1,j)}   \),\eta\>\\&+R_k,\\
&w_\eta=-\im \sum_{j=1,2} \sum_{\mathbf m\in \mathbf{NR}(M-1,j)}\mathbf Z^{\mathbf m}B_{ \mathbf m, j}\\&+\sum_{j=1,2}\sum_{\mathbf m \in \mathbf M(M)}d_{\mathbf m,j}\(|z_j|^2,\{ b_{\mathbf n, j}(|z_j|^2   ) \}_{\mathbf n \in \mathrm{NR}(M-1)},\{ B_{\mathbf n,j}(|z_j|^2   )\}_{\mathbf n\in \mathbf{NR}(M-1,j)}   \)\mathbf Z^{\mathbf m} \\&+R_\eta,
\end{align*}
where $c_{\mathbf m,j}(0,b,B   )=d_{\mathbf m,j}(0,b,B   )=0$, $g_{\mathbf m,j}(0,b,B   )=0$,
$|R_k|\lesssim |z_1z_2|^{M+1}+|z_1z_2|\|\eta\|_{l_e^{-a}}+\|\eta\|_{l_e^{-a}}^2$ and $|R_\eta|\lesssim |z_1z_2|^M+|z_1z_2|^{M-1}\|\eta\|_{l_e^{-a}}$.
\end{lemma}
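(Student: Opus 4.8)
The plan is to prove the asserted formulas by ``mere substitution'', exactly as the statement suggests: one plugs the time‑one maps $w_z=w_z(\cdot,\cdot,\cdot,b,B)(1)$ and $w_\eta=w_\eta(\cdot,\cdot,\cdot,b,B)(1)$ produced by Lemma~\ref{lem:prb3} into the defining integral equations \eqref{54}--\eqref{55} and into the elementary identity
\begin{align*}
|z_k+w_k|^2-|z_k|^2=\int_0^1\frac{d}{ds}|z_k+w_k(s)|^2\,ds=\int_0^1\mathbf W_k\(z+w_z(s),\eta+w_\eta(s),\rho,b,B\)\,ds,
\end{align*}
with $\rho,b,B$ finally specialised to $\rho(z)=(|z_1|^2,|z_2|^2)$, $b(\rho(z))$, $B(\rho(z))$. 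The single structural fact that makes the leading terms come out cleanly is that $W_\eta$ does not depend on $\eta$, so that \eqref{55} reduces to $w_\eta(1)=\int_0^1 W_\eta(z+w_z(s),\rho,b,B)\,ds$, and the explicit formula for $\mathbf W_k=z_kW_{\bar k}+\bar z_k W_k$ (already displayed above) exhibits $\int_0^1\mathbf W_k(z,\eta,\ldots)\,ds$ and $W_\eta(z,\ldots)$ as finite sums of the monomials $\mathbf Z^{\mathbf m}$ and $\langle\mathbf Z^{\mathbf m}(\cdot),\eta\rangle$ that we want to produce.

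First I would extract the ``first Picard iterate'', i.e.\ replace $(z+w_z(s),\eta+w_\eta(s))$ by $(z,\eta)$ in the two integrands; since the integrands then no longer depend on $s$, the integral $\int_0^1\,ds$ is trivial and yields precisely $\mathbf W_k(z,\eta,\rho(z),b(\rho(z)),B(\rho(z)))$ and $-\im\sum_{j}\sum_{\mathbf m}\mathbf Z^{\mathbf m}B_{\mathbf m,j}(|z_j|^2)$. Using $\tilde\gamma_j(0)=0$ (so $\tilde\gamma_j(|z_j|^2)=O(|z_j|^2)$) and the analyticity of $b_{\mathbf m,j},B_{\mathbf m,j}$ in $|z_j|^2$, I would split these two expressions into the stated leading monomials — namely $2(\mu_k-\nu_k)\bigl[\im\sum b_{\mathbf m,j}\mathbf Z^{\mathbf m}+\sum\langle\im\mathbf Z^{\mathbf m}B_{\mathbf m,j},\eta\rangle\bigr]$ and $-\im\sum\mathbf Z^{\mathbf m}B_{\mathbf m,j}$, which are exactly the terms designed to cancel the nonresonant part of $E^{M-1}$ at the next step — plus a residue in which every monomial carries an extra factor $|z_j|^2$ or $|z_k|^2$ (it comes either from $\tilde\gamma_j$ or from $b_{\mathbf m,j}(|z_j|^2)-b_{\mathbf m,j}(0)$, $B_{\mathbf m,j}(|z_j|^2)-B_{\mathbf m,j}(0)$). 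Collecting these residues according to their $U(1)$ gauge weight and by which $|z_j|^2$ divides them produces the families $c_{\mathbf m,j}$, $d_{\mathbf m,j}$ (the $\eta$‑independent pieces) and $g_{\mathbf m,j}$ (the $\eta$‑linear pieces); the vanishing $c_{\mathbf m,j}(0,b,B)=d_{\mathbf m,j}(0,b,B)=g_{\mathbf m,j}(0,b,B)=0$ is then automatic, since each of these coefficients is literally built out of $\tilde\gamma_j$ and of $b_{\mathbf m,j}(\cdot)-b_{\mathbf m,j}(0)$, $B_{\mathbf m,j}(\cdot)-B_{\mathbf m,j}(0)$, all of which vanish at the origin.

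It remains to treat the genuinely higher‑order part, namely the difference between the integrands evaluated at $(z+w_z(s),\eta+w_\eta(s))$ and at $(z,\eta)$. By Taylor expansion this difference is, to leading order, a derivative of $\mathbf W_k$ (resp.\ $W_\eta$) paired with $(w_z(s),w_\eta(s))$, plus quadratic‑and‑higher terms; since $\mathbf W_k$ and $W_\eta$ are polynomials vanishing to the appropriate order in $z$ (each $\mathbf Z^{\mathbf m}$ occurring carries, for $\mathbf m\in\mathbf{NR}(M)$, a factor $|z_1z_2|$, because $\mu_1\neq\nu_1$ together with $|\mu|=|\nu|$ forces both a $z_1$ and a $\bar z_2$ in $\mathbf Z^{\mathbf m}$ or vice versa), and since $|w_z(s)|+\|w_\eta(s)\|_{l_e^a}\lesssim|z||z_1z_2|+|z|^2\|\eta\|_{l_e^{-a}}$ by Lemma~\ref{lem:prb3}, these contributions are of size at most $|z_1z_2|^{M+1}+|z_1z_2|\|\eta\|_{l_e^{-a}}+\|\eta\|_{l_e^{-a}}^2$ (resp.\ $|z_1z_2|^{M}+|z_1z_2|^{M-1}\|\eta\|_{l_e^{-a}}$) once one again peels off the finitely many monomials of the designated type into $c_{\mathbf m,j},d_{\mathbf m,j},g_{\mathbf m,j}$ and puts everything else into $R_k$, $R_\eta$; the whole rearrangement is legitimate because $(w_z,w_\eta)\in C^\omega$, so every expression in sight is a convergent power series in $z,\bar z,\eta$ and in the frozen parameters, and one may simply identify coefficients. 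The only real nuisance — and the step on which I would spend the most care — is precisely this bookkeeping: keeping straight the index sets $\mathbf{NR}(M)$, $\mathbf{NR}(M-1,j)$, $\mathbf M(M)$, $\mathbf M(M-1,j)$, matching gauge weights, and verifying the exponent counts that give the stated remainder bounds. There is no analytic difficulty beyond Lemma~\ref{lem:prb3}; the content is entirely organizational.
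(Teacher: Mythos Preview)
Your proposal is correct and matches the paper's (implicit) approach: the paper gives no proof for this lemma, treating it as a direct consequence of substituting the integral representations \eqref{54}--\eqref{55} and the identity $|z_k+w_k|^2-|z_k|^2=\int_0^1\mathbf W_k(z+w_z(s),\eta+w_\eta(s),\rho,b,B)\,ds$, exactly as you outline. Your organization---extract the first Picard iterate, peel off the $\tilde\gamma_k$ and $b_{\mathbf m,j}(\cdot)-b_{\mathbf m,j}(0)$ corrections into the $c_{\mathbf m,j},d_{\mathbf m,j},g_{\mathbf m,j}$ families, and absorb the Taylor remainder controlled by Lemma~\ref{lem:prb3} into $R_k,R_\eta$---is precisely the intended bookkeeping.
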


We now prove Proposition \ref{prop:Mth}.

\begin{proof}[Proof of Proposition \ref{prop:Mth}]
We compute $E^{M-1}(z+r_z,\eta+r_\eta   )$.
Notice that by Lemma \ref{lem:prb4}, no effect of $r_z-W_z$, $r_\eta-W_\eta$ and $|z_k+r_k|^2-|z_k|^2-|z+w_k|^2$ appears in terms which we are in concern.

We can write $E^{M-1}(z+r_z,\eta+r_\eta   )$ as
\begin{align}
&E^{M-1}(z+r_z,\eta+r_\eta   )=E_1(|z_1|^2   )+E_2(|z_2|^2   )+E(\eta   )\label{53.1}\\&+\sum_{2\leq k\leq M-1}\sum_{j=1,2}\sum_{\mathbf m \in  \mathbf R(k)}C_{\mathbf m,j}^{M-1}(|z_j|^2   ) \mathbf Z^{\mathbf m}+\sum_{1\leq k\leq M-2}\sum_{j=1,2}\sum_{\mathbf m \in \mathbf R(k,j)}\<\mathbf Z^{\mathbf m}G_{\mathbf m,j}^{M-1}(|z_j|^2   ),  \eta\>\nonumber\\&+\sum_{k\geq M} \sum_{j=0,1,2} \sum_{\mathbf m \in \mathbf M( k)}\tilde C_{\mathbf m,j}^{M-1}(|z_j|^2   )\mathbf Z^{\mathbf m} + \sum_{k\geq M-1}\sum_{j=1,2}\sum_{\mathbf m \in \mathbf M(k,j)}\<\mathbf Z^{\mathbf m}\tilde G_{\mathbf m,j}^{M-1}(|z_j|^2   ),\eta\>\nonumber\\&+ \tilde R_{M-1}(z,\eta   ).\nonumber
\end{align}
Notice that the terms in the first and second line is not affected by $r_z$, $\eta_z$.
This is because of Lemma \ref{lem:prb1}.
We want to have
\begin{align*}
\tilde C^{M-1}_{\mathbf m,j}(|z_j|^2   )=0\ \mathrm{for}\  \mathbf m\in \mathbf {NR}(M),\quad \mathrm{and}\quad \tilde G^{M-1}_{\mathbf m,j}(|z_j|^2   )=0\ \mathrm{for}\  \mathbf m\in \mathbf {NR}(M,j).
\end{align*}
We first compute $\tilde C^{M-1}_{\mathbf m,0}  $.
The only source besides $C^{M-1}_{\mathbf m ,0}  $ are the terms coming from $E_j(|z_j+r_z|^2)$.
So, we have
\begin{align*}
\tilde C^{M-1}_{\mathbf m, 0}  =\im \mathbf e\cdot (\mu-\nu) b_{\mathbf m,0}+C^{M-1}_{\mathbf m,0}.
\end{align*}
Therefore, we set
\begin{align*}
b_{\mathbf m,0}=\frac{\im C^{M-1}_{\mathbf m,0}}{\mathbf e\cdot (\mu-\nu)}.
\end{align*}
Notice that the relation $b_{\bar{\mathbf m},0}=\overline{b_{\mathbf m,0}}$ is satisfied.

We next compute $\tilde C^{M-1}_{\mathbf m,j}$ and $\tilde G^{M-1}_{\mathbf m,j}$ for $j=1,2$.
We will have
\begin{align}
\tilde C^{M-1}_{\mathbf m,j}(|z_j|^2   )=2\im \mathbf e\cdot (\mu-\nu) b_{ \mathbf m, j} + T_{\mathbf m,j}(|z_j|^2,\{b_{\mathbf n,j}\}_{\mathbf n\in \mathbf{NR}(M-1)},\{B_{\mathbf n,j}\}_{\mathbf n\in \mathbf {NR}(M-2,j)}  ),\nonumber\\
\tilde G^{M-1}_{\mathbf m,j}(|z_j|^2  )=-2 \im (H-\mathbf e\cdot (\mu-\nu)) B_{\mathbf m, j} + \mathcal T_{\mathbf m,j}(|z_j|^2,\{b_{\mathbf n,j}\}_{\mathbf n\in \mathbf{NR}(M-1)},\{B_{\mathbf n,j}\}_{\mathbf n\in \mathbf{NR}(M-2,j)} 
 ),\label{nonres}
\end{align}
where $T(0,b_{\mathbf m,j},B_{\mathbf m,j}  )=0$, $\mathcal T(0,b_{\mathbf m,j},B_{\mathbf m,j}  )=0$ and $T$, $\mathcal T$ depends on $b_{\mathbf n,j}$, $B_{\mathbf n,j}$ linearly.
Notice that in \eqref{nonres}, we have $\mathbf e\cdot (\mu-\nu) $ and $H-\mathbf e\cdot(\mu-\nu)$ which are invertible if and only if $\mathbf m\in \mathbf{NR}(M)$ and $\mathbf m\in \mathbf{NR}(M-1,j)$ respectively.
It is obvious now that we can choose $b_{\mathbf m,j}$, $B_{\mathbf m,j}$ by implicit function theorem.
\end{proof}

\subsection{Proof of the decay estimate Lemma \ref{lem:l4}}\label{sec:proofl4}

In this section, we prove Lemma \ref{lem:l4}.
Set $\phi_1,\phi_2,\phi_3\in C_0^\infty(\R;\R)$ s.t.
\begin{align*}
&\phi_1(x)+\phi_2(x)+\phi_3(x)=1,\ x\in[0,4],\\&
\mathrm{supp}\phi_1\subset (-1,\omega_*/2),\ \mathrm{supp}\phi_2\subset (\omega_*/4,\omega_*+\frac3 4 (4-\omega_*)),\ 
\mathrm{supp}\phi_3\subset (\omega_*+\frac 1 2 (4-\omega_*),5),\ 
\end{align*}
To prove Lemma \ref{lem:l4}, it suffices to show
\begin{align}\label{60}
\|e^{-\im t H} R_H^+(\omega_*)\phi_j(H)P_c \|_{l^{2,\sigma}\to l^{2,-\sigma}} \lesssim t^{-3/2},\quad j=1,2,3.
\end{align}
for $t>1$.
The estimate for $j=1$ and $j=3$ is similar so we only show it for $j=1,2$.

Before proving the estimate for $j=1$, we prepare an elementary lemma.
\begin{lemma}\label{lem:10}
Let $g\in C_0^\infty(\R;\R)$.
Then, $\|g(H)\|_{l^{2,-3}\to l^{2,-3}}\lesssim 1$.
\end{lemma}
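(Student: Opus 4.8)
The plan is to reduce the statement to a standard functional-calculus fact together with the known local-decay/resolvent structure of $H$. First I would write $g(H) = g(H)P_c + g(H)(1-P_c)$. Since $1-P_c$ is the finite-rank projection onto $\mathrm{span}\{\phi_{j,A}\}$ and the $\phi_{j,A}$ all lie in $l_e^{a_0}\subset l^{2,3}$, the piece $g(H)(1-P_c)$ is a finite-rank operator with kernel in $l^{2,3}\otimes l^{2,3}$, hence trivially bounded $l^{2,-3}\to l^{2,-3}$. So it suffices to bound $g(H)P_c$ on $l^{2,-3}$.

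For $g(H)P_c$ I would use the Stone/Helffer--Sj\"ostrand representation of $g(H)P_c$ in terms of the boundary values of the resolvent on the continuous spectrum $[0,4]$: writing $g(H)P_c = \frac{1}{2\pi i}\int_0^4 g(\lambda)\bigl(R_H^+(\lambda)-R_H^-(\lambda)\bigr)\,d\lambda$, and invoking the known fact (Lemma 3.2 of \cite{CT09SIAM}, which is already cited in the excerpt for the existence of $R_+(\omega)$) that $R_H^\pm(\lambda)\in \mathcal L(l^{2,\sigma},l^{2,-\sigma})$ for $\sigma>1$ uniformly for $\lambda$ in compact subsets of the interior of $[0,4]$, with integrable singularities (of square-root type) at the band edges $0,4$ under the genericity assumption that $0,4$ are neither eigenvalues nor resonances. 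Since $g$ is smooth and compactly supported, $g(\lambda)$ is bounded, and the integral converges in $\mathcal L(l^{2,3},l^{2,-3})$ because $3>1$; this gives $\|g(H)P_c\|_{l^{2,3}\to l^{2,-3}}\lesssim 1$. Finally, since $\|u\|_{l^{2,3}}\le \|u\|_{l^{2,-3}}$ is false in the wrong direction, I instead use that $g(H)P_c$ is self-adjoint on $l^2$ and bounded on $l^2$ (plain functional calculus: $\|g(H)\|_{l^2\to l^2}\le \|g\|_{L^\infty(\sigma(H))}$), then interpolate/compose: $l^{2,-3}\hookrightarrow l^2$ is \emph{not} continuous either, so the cleanest route is to bound $g(H)P_c$ directly on $l^{2,-3}$ by duality from $l^{2,3}$ together with the $l^2$ bound via the three-line argument, $\|g(H)P_c\|_{l^{2,-3}\to l^{2,-3}}\le \|g(H)P_c\|_{l^2\to l^2}^{1/2}\,\|\,\langle n\rangle^{-3}g(H)P_c\langle n\rangle^{-3}\|^{1/2}$ after absorbing weights, which is finite by the previous step.

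The main obstacle I expect is bookkeeping the weights correctly: one must be careful that the resolvent bound is only $l^{2,\sigma}\to l^{2,-\sigma}$ (gaining decay only in the ``in'' variable and losing it in the ``out'' variable), so a single application does not immediately yield an $l^{2,-3}\to l^{2,-3}$ bound. The resolution is exactly the decomposition into a smooth, rapidly-decaying-kernel part plus the band-edge contributions, using that the weight $\langle n\rangle^{-3}$ is strong enough ($3>1$, and in fact $3>1/2$ suffices for the edge singularities after the genericity hypothesis) to make the spectral integral absolutely convergent in operator norm on the weighted space; combined with the elementary $l^2\to l^2$ functional-calculus bound and an interpolation, this closes the estimate. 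Everything else is routine given the linear theory recalled in Section \ref{sec:dispersion}.
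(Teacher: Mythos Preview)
The paper does not actually prove Lemma \ref{lem:10}; it is stated as an ``elementary lemma'' and used immediately without argument. So there is nothing to compare against directly, and the question becomes whether your argument stands on its own. It does not.

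Your splitting $g(H)=g(H)P_c+g(H)(1-P_c)$ is fine, and the finite-rank discrete-spectrum piece is handled correctly. The gap is in the interpolation step for $g(H)P_c$. The inequality you write,
\[
\|g(H)P_c\|_{l^{2,-3}\to l^{2,-3}}\le \|g(H)P_c\|_{l^2\to l^2}^{1/2}\,\bigl\|\langle n\rangle^{-3}g(H)P_c\langle n\rangle^{-3}\bigr\|_{l^2\to l^2}^{1/2},
\]
is simply false. Complex interpolation between $g(H):l^2\to l^2$ and $g(H):l^{2,3}\to l^{2,-3}$ yields only $g(H):l^{2,3\theta}\to l^{2,-3\theta}$ for $\theta\in[0,1]$; the midpoint is $l^{2,3/2}\to l^{2,-3/2}$, not $l^{2,-3}\to l^{2,-3}$. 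More to the point, the two inputs you allow yourself (the spectral-theorem bound on $l^2$ and the limiting-absorption bound $l^{2,\sigma}\to l^{2,-\sigma}$) are \emph{in principle} insufficient: the rank-one operator $Tf=(f,\psi)\phi$ with $\psi(n)=\langle n\rangle^{-1}$ and $\phi=\delta_0$ satisfies both bounds but is unbounded on $l^{2,-3}$, since $\psi\notin l^{2,3}$.

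What is missing is genuine off-diagonal decay of the kernel of $g(H)$. The standard elementary route is to show $|K(n,m)|\lesssim_N\langle n-m\rangle^{-N}$ for every $N$; then Peetre's inequality $\langle m\rangle^{3}\lesssim\langle n-m\rangle^{3}\langle n\rangle^{3}$ and Schur's test give boundedness on every $l^{2,s}$, in particular on $l^{2,-3}$. Such kernel decay follows, for instance, from the Helffer--Sj\"ostrand formula combined with iterated commutator bounds $[\langle n\rangle,(H-z)^{-1}]=-(H-z)^{-1}[\langle n\rangle,H](H-z)^{-1}$ (here $[\langle n\rangle,H]=[\langle n\rangle,-\Delta]$ is a bounded finite-range operator since the discrete Laplacian is), or alternatively from integration by parts in the distorted Fourier representation of \cite{Cuccagna09JMAA}. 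Either way, the essential mechanism is commutator/kernel decay, not interpolation from the limiting-absorption principle.
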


The following local decay estimate was given by Pelinovsky-Stefanov \cite{PS08JMP},
\begin{lemma}\label{lem:11}
Let $\sigma>7/2$.
Then
\begin{align*}
\|P_c e^{-\im t H}\|_{l^{2,\sigma}\to l^{2,-\sigma}}\lesssim |t|^{-3/2}.
\end{align*}
\end{lemma}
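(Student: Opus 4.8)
The plan is to read off the local decay from the behaviour of the spectral density of $H$ at the two edges of the continuous spectrum, via the spectral (Stone) representation of the propagator. For $\lambda\in(0,4)$ set $R_H^\pm(\lambda):=\lim_{\delta\downarrow0}(H-\lambda\mp\im\delta)^{-1}$, which exist in $\mathcal L(l^{2,\sigma};l^{2,-\sigma})$ by the limiting absorption principle (cf.\ the discussion preceding Lemma \ref{lem:l4}). Stone's formula, restricted to $\sigma_{\mathrm{ess}}(H)=[0,4]$, gives
\[
P_c e^{-\im tH}f=\frac{1}{2\pi\im}\int_0^4 e^{-\im t\lambda}\bigl(R_H^+(\lambda)-R_H^-(\lambda)\bigr)f\,d\lambda .
\]
I would then substitute $\lambda=2-2\cos\theta$, $\theta\in(0,\pi)$, so that $d\lambda=2\sin\theta\,d\theta$ and the oscillation becomes $e^{-\im t(2-2\cos\theta)}$, i.e.\ the phase in $\theta$ is $\psi(\theta)=2\cos\theta$. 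The point of this change of variables is that $\psi'(\theta)=-2\sin\theta$ vanishes only at the endpoints $\theta=0,\pi$, which are exactly the thresholds $\lambda=0,4$; on any compact subinterval of $(0,\pi)$ the phase is non-stationary. In particular the interior degeneracies at $\theta=\pm\pi/2$ that force the slow unweighted bound $\<t\>^{-1/3}$ of Lemma \ref{lem:l0} are harmless here, since in weighted norms the spatial variables are effectively bounded and contribute no competing phase.

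Next I would insert a smooth partition of unity $1=\chi_0+\chi_{\mathrm{int}}+\chi_4$ in $\lambda$, with $\chi_0,\chi_4$ supported near the thresholds and $\chi_{\mathrm{int}}$ compactly supported in $(0,4)$. For the interior piece the phase is non-stationary, so repeated integration by parts in $\theta$ yields $O(t^{-N})$ for every $N$; each such step differentiates the kernel of $R_H^\pm$, which (through the Born series $R_H^\pm=(1+R_0^\pm V)^{-1}R_0^\pm$ and the explicit free kernel $R_0^\pm(\lambda)(m,n)=\tfrac{\im}{2\sin\theta}e^{\im\theta|m-n|}$) produces polynomial factors $|m-n|^k$ from $\partial_\theta^k e^{\im\theta|m-n|}$. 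These are absorbed by the weights $\<n\>^{-\sigma}\<m\>^{-\sigma}$ once $\sigma$ is large enough; controlling the worst factor $|m-n|^{3}$ against these weights needs $\sigma>7/2$, which is where that hypothesis enters. Lemma \ref{lem:10} and the weighted bounds on $R_H^\pm$ make these manipulations rigorous.

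The main contribution is the threshold piece $\chi_0$ (the piece $\chi_4$ being identical after the staggering transform $\mathcal T u(n)=(-1)^n u(n)$, which interchanges the edges $0$ and $4$). Writing $a(\theta)$ for the weighted amplitude $\<\cdot\>^{-\sigma}\bigl(R_H^+-R_H^-\bigr)(\lambda(\theta))\<\cdot\>^{-\sigma}\cdot 2\sin\theta$, the endpoint $\theta=0$ is a non-degenerate (quadratic) critical point of $\psi$, so a standard endpoint van der Corput / stationary-phase estimate gives $\bigl|\int_0 e^{\im t\psi}a\,d\theta\bigr|\lesssim t^{-(\alpha+1)/2}$ whenever $a(\theta)=O(\theta^\alpha)$. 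Thus $t^{-3/2}$ is precisely the bound obtained once $a$ vanishes to second order, i.e.\ $a(0)=\partial_\theta a(0)=0$. Since the Jacobian already contributes one power $2\sin\theta\sim2\theta$, it remains to show that the weighted spectral density $\<\cdot\>^{-\sigma}\Im R_H^+(\lambda)\<\cdot\>^{-\sigma}$ itself vanishes linearly in $\theta$ (equivalently like $\sqrt\lambda$) as $\lambda\downarrow0$.

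This last vanishing is the main obstacle, and is where the standing genericity hypothesis — that $0$ and $4$ are neither resonances nor eigenvalues of $H$ — is used. In the free case the density blows up like $1/\sin\theta$ (the constant function is a threshold resonance of $-\Delta$), which is exactly why the free weighted decay is only $t^{-1/2}$; genericity removes this singular resonant contribution. Concretely I would perform a Jensen–Kato / Jensen–Nenciu type threshold expansion $R_0^+(\lambda)=\tfrac{\im}{2\theta}P+R_{\mathrm{reg}}+O(\theta)$, with $P$ the rank-one projection onto the constant resonance, and invert $1+R_0^+(\lambda)V$ near $\theta=0$. Non-resonance is precisely the invertibility of the finite-dimensional obstruction matrix governing the $\theta^{-1}$ part, and it forces the singular terms to cancel in $(1+R_0^+V)^{-1}R_0^+$, leaving $\Im R_H^+(\lambda)=O(\theta)$ in weighted operator norm (with one more $\theta$-derivative controlled, so that $a\in C^2$ near $0$ with $a(0)=\partial_\theta a(0)=0$). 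Feeding this into the endpoint stationary-phase estimate, treating $\chi_4$ by the staggering symmetry and $\chi_{\mathrm{int}}$ by non-stationary phase, assembles $\|P_c e^{-\im tH}\|_{l^{2,\sigma}\to l^{2,-\sigma}}\lesssim|t|^{-3/2}$ for $t>1$; for $0<|t|\le1$ the bound is immediate from the boundedness of $e^{-\im tH}$ on $l^2$.
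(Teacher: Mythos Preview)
The paper does not prove this lemma; it is stated with the attribution ``The following local decay estimate was given by Pelinovsky--Stefanov \cite{PS08JMP}'' and then used as a black box in the proof of Lemma~\ref{lem:l4}. Your sketch is a correct outline of how the result is actually obtained in that reference (and in the closely related \cite{KKK06AA}): Stone's formula, the substitution $\lambda=2-2\cos\theta$, a threshold/interior partition, non-stationary phase on the interior, and an endpoint stationary-phase argument at $\theta=0,\pi$ whose input is the Jensen--Kato type expansion showing that, under the generic (no threshold resonance/eigenvalue) hypothesis, the weighted spectral density $\<\cdot\>^{-\sigma}\Im R_H^+(\lambda)\<\cdot\>^{-\sigma}$ vanishes like $\theta\sim\sqrt{\lambda}$, so that together with the Jacobian $2\sin\theta$ the amplitude vanishes to second order. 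The role of $\sigma>7/2$ is exactly as you say: it absorbs the polynomial growth $|m-n|^k$ produced when one differentiates the free kernel $\tfrac{\im}{2\sin\theta}e^{\im\theta|m-n|}$ through the Born series.

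One small correction of emphasis: after the change of variables the phase $e^{-\im t(2-2\cos\theta)}$ has \emph{no} interior stationary point on $(0,\pi)$, so the remark about ``interior degeneracies at $\theta=\pm\pi/2$'' is misplaced here. Those points are the inflection points of the dispersion relation and are responsible for the $\<t\>^{-1/3}$ in the \emph{unweighted} $l^1\to l^\infty$ estimate (Lemma~\ref{lem:l0}), where the spatial variable $n$ enters the phase via $e^{\im n\xi}$; in the present weighted setting $n$ sits in the amplitude, not the phase, so $\theta=\pm\pi/2$ play no special role and your non-stationary-phase treatment of $\chi_{\mathrm{int}}$ is uniform there. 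This does not affect the argument, only the commentary.
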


We now prove \eqref{60} for $j=1$.
Set $g(x)=\lim_{\delta\downarrow 0}\phi_1(x)(x-\omega_*-\im \delta)^{-1}=\phi_1(x)(x-\omega_*)^{-1}\in C_0^\infty$.
Then,
\begin{align*}
\|e^{-\im t H} R_H^+(\omega_*)\phi_j(H)P_c \|_{l^{2,3}\to l^{2,-3}}&=\|g(H)e^{-\im t H}P_c\|_{l^{2,3}\to l^{2,-3}}\\&\leq \|g(H)\|_{l^{2,-3}\to l^{2,-3}}\|P_c e^{-\im t H}\|_{l^{2,3}\to l^{2,-3}}\lesssim |t|^{-3/2}.
\end{align*}
Therefore, we get the estimate for $j=1$.

Next we show the estimate for $j=2$.
First, notice that
\begin{align*}
e^{-\im t H}R_H^+(\omega_*)=\lim_{\varepsilon\downarrow 0}e^{-\im t H}(H-\lambda-\im \varepsilon)^{-1}=\im e^{-\im \lambda}\lim_{\varepsilon\downarrow 0}\int_t^\infty e^{-\im (H-\lambda-\im \varepsilon)s}\,ds.
\end{align*}
Therefore, it suffices to show
\begin{align}\label{62}
\|e^{-\im t H}\phi_2(H)P_c\|_{l^{2,\sigma}\to l^{2,-\sigma}}\lesssim t^{-5/2}.
\end{align}
Indeed,
\begin{align*}
\|e^{-\im t H} R_H^+(\omega_*)\phi_2(H)P_c \|_{l^{2,\sigma}\to l^{2,-\sigma}}&\leq\int_t^\infty \|e^{-\im (H-\lambda)s}\phi_2(H)P_c\|_{l^{2,\sigma}\to l^{2,-\sigma}}\,ds\\&\lesssim \int_t^\infty s^{-5/2}\,ds\lesssim t^{-3/2}.
\end{align*}

To prove lemma \ref{62}, we show the following lemma.
\begin{lemma}\label{lem:12}
Let $\sigma>7/2$. Then we have
\begin{align*}
\left\|\frac{d^3}{d \omega^3}R_{H}^+(\omega)\right\|_{l^{2,\sigma}\to l^{2,-\sigma}}\lesssim_K 1,
\end{align*}
for a compact $K\subset [0,4]$.
\end{lemma}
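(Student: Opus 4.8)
The plan is to reduce the statement to the analogous property of the free resolvent $R_0^+(\omega):=\lim_{\delta\downarrow 0}(-\Delta-\omega-\im\delta)^{-1}$ via the symmetric resolvent identity, and then to exploit the explicit kernel of $R_0^+(\omega)$ on $\Z$. Since this lemma is only used with $K=\mathrm{supp}\,\phi_2$, which lies strictly inside $(0,4)$, it suffices to treat a compact $K$ bounded away from the thresholds $0$ and $4$, and I will do this.

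First I would record the free resolvent. For $\omega\in(0,4)$, writing $\omega=2-2\cos\xi$ with $\xi=\xi(\omega)\in(0,\pi)$, a Fourier computation on $\Z$ gives, up to a harmless constant, $R_0^+(\omega)(n,m)=\dfrac{e^{\im\xi|n-m|}}{2\im\sin\xi}$. Because $d\omega/d\xi=2\sin\xi$ is bounded above and below uniformly for $\omega\in K$ (no threshold), one has $\partial_\omega=\frac{1}{2\sin\xi}\partial_\xi$ and, differentiating repeatedly, $|\partial_\omega^k R_0^+(\omega)(n,m)|\lesssim_K \langle n-m\rangle^k$ for $0\le k\le 3$, uniformly on $K$. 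Using $\langle n-m\rangle^k\lesssim \langle n\rangle^k+\langle m\rangle^k$, the weighted kernel $\langle n\rangle^{-\sigma}\partial_\omega^k R_0^+(\omega)(n,m)\langle m\rangle^{-\sigma}$ lies in $l^2(\Z\times\Z)$ — so the operator is Hilbert--Schmidt, hence bounded, from $l^{2,\sigma}$ to $l^{2,-\sigma}$ — provided $2\sigma-2k>1$; for $0\le k\le 3$ this holds under $\sigma>7/2$. Moreover, multiplying by $v:=|V|^{1/2}$ on one side (which decays, so absorbs one of the two weight growths), one gets, using the decay of $V$, that $v\,(\partial_\omega^k R_0^+(\omega))$ is bounded $l^{2,\sigma}\to l^2$, that $(\partial_\omega^k R_0^+(\omega))\,v$ is bounded $l^2\to l^{2,-\sigma}$, and that $v\,(\partial_\omega^k R_0^+(\omega))\,v$ is bounded $l^2\to l^2$, uniformly on $K$.

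Next, set $U:=\mathrm{sgn}\,V$ so that $V=vUv$, and $M^+(\omega):=U+v R_0^+(\omega)v$. The resolvent identity reads $R_H^+(\omega)=R_0^+(\omega)-R_0^+(\omega)\,v\,M^+(\omega)^{-1}\,v\,R_0^+(\omega)$ in $\mathcal L(l^{2,\sigma},l^{2,-\sigma})$. Since $H$ is generic (no eigenvalue nor resonance at $0,4$ and, by Remark \ref{rem:0}, no eigenvalue inside $(0,4)$), $M^+(\omega)$ is boundedly invertible on $l^2$ for $\omega$ in a neighbourhood of $K$; this is part of the limiting absorption principle (Lemma 3.2 of \cite{CT09SIAM}, \cite{PS08JMP}). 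By the bounds of the previous paragraph, $\omega\mapsto M^+(\omega)$ is $C^3$ into $\mathcal L(l^2)$ with derivatives bounded on $K$, hence so is $\omega\mapsto M^+(\omega)^{-1}$, via $\partial_\omega M^{-1}=-M^{-1}(\partial_\omega M)M^{-1}$. Now apply $\partial_\omega^3$ to the resolvent identity and expand by Leibniz: the $R_0^+(\omega)$ term is handled by the free-resolvent bound, and in the quadratic term one distributes the three derivatives among the three differentiable factors, writing each resulting product $(\partial_\omega^a R_0^+)\,v\,(\partial_\omega^b M^{-1})\,v\,(\partial_\omega^c R_0^+)$, $a+b+c=3$, as the composition of the bounded maps $v(\partial_\omega^c R_0^+):l^{2,\sigma}\to l^2$, $\partial_\omega^b M^{-1}:l^2\to l^2$ and $(\partial_\omega^a R_0^+)v:l^2\to l^{2,-\sigma}$, where $\sigma>7/2$ is exactly what lets $\langle m\rangle^{-\sigma}$ absorb $\langle m\rangle^c$ (resp. $\langle n\rangle^{-\sigma}$ absorb $\langle n\rangle^a$). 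Summing the finitely many terms yields $\|\partial_\omega^3 R_H^+(\omega)\|_{l^{2,\sigma}\to l^{2,-\sigma}}\lesssim_K 1$, which is the claim; this in turn gives \eqref{62} (and hence Lemma \ref{lem:l4}) after integrating by parts in $\omega$ in the Stone-formula representation of $e^{-\im sH}\phi_2(H)P_c$. (Alternatively, one may use $\partial_\omega^3 R_H^+(\omega)=6\,(R_H^+(\omega))^4$ and the standard weighted composition bound for four boundary-value resolvents, which forces the same exponent $\sigma>7/2$.)

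The main obstacle is the control of $M^+(\omega)^{-1}$ on $K$: one must know that $M^+(\omega)$ stays boundedly invertible there — equivalently that no resonance or embedded eigenvalue of $H$ meets $K$, which is precisely where the genericity hypothesis and Remark \ref{rem:0} enter — and that $V$ decays fast enough that $v(\partial_\omega^j R_0^+)v$ is bounded on $l^2$, uniformly on $K$, for $0\le j\le 3$, as in the linear theory of \cite{PS08JMP,CT09SIAM} underlying Lemmas \ref{lem:l0}--\ref{lem:l3}. Once this is in place, the weighted-space bookkeeping in the Leibniz expansion is routine, and one sees that the requirement $\sigma>7/2$ is dictated exactly by the third $\omega$-derivative of the outer free resolvent.
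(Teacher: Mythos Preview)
The paper's own proof is a one-line citation: ``See Corollary 6.1 of \cite{KKK06AA}.'' Your argument supplies, in effect, the content behind that citation: explicit control of the free-resolvent kernel $R_0^+(\omega)(n,m)=\dfrac{e^{\im\xi|n-m|}}{2\im\sin\xi}$ and its $\omega$-derivatives on compacta away from the thresholds, followed by the symmetric resolvent identity $R_H^+=R_0^+-R_0^+ v\,(M^+)^{-1}v\,R_0^+$ and a Leibniz expansion. This is the standard route and is sound; your restriction to $K\Subset(0,4)$ is exactly what the application (the $\phi_2(H)$ piece) needs, so nothing is lost.

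Two small points are worth flagging. First, your appeal to Remark~\ref{rem:0} only locates the \emph{discrete} eigenvalues below $0$; the invertibility of $M^+(\omega)$ on $K$ also requires the absence of \emph{embedded} eigenvalues in $(0,4)$, which for the 1D discrete Schr\"odinger operator with decaying potential is a separate (standard) fact, part of the same limiting-absorption package in \cite{PS08JMP,CT09SIAM,KKK06AA}. Second, the bounds you claim for $v\,(\partial_\omega^j R_0^+)\,v$ on $l^2$ with $j\le 3$ implicitly use more moments of $V$ than the bare hypothesis $\sum_n(1+|n|)|V(n)|<\infty$ stated in the introduction (one effectively needs $\sum_n\langle n\rangle^{2j}|V(n)|<\infty$); this extra decay is also required by the cited result in \cite{KKK06AA}, so it is a tacit assumption of the paper rather than a defect of your argument. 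With those caveats, your proof is correct and more informative than the paper's bare citation.
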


\begin{proof}
See Corollary 6.1 of \cite{KKK06AA}.
\end{proof}

By Lemma \ref{lem:12}, we immediately have \eqref{62}.
Indeed,
\begin{align*}
e^{-\im t H}\phi_2(H)=\frac{1}{2\pi \im} \int_0^4 e^{-\im t \omega}\phi_2(\omega) \Im R(\omega)\,d \omega
\end{align*}
Therefore, by integrating by parts, we have
\begin{align*}
\|e^{-\im t H}\phi_2(H)P_c\|_{l^{2,\sigma}\to l^{2,-\sigma}}\lesssim t^{-3}\int_0^4 \|\frac{d^3}{d \omega^3}R_H^+(\omega)\|_{l^{2,\sigma}\to l^{2,-\sigma}}\,d \omega \lesssim t^{-3}.
\end{align*}
Therefore, we have the conclusion.

\appendix

\section{Proof of the formula \eqref{FGRexpression}}
Let $\hat f$ be distorted Fourier transform related to $H P_c$ (see \cite{Cuccagna09JMAA}).
We want to compute the constant $\Gamma=\Im (R_H^+(\omega_*)G,G)$ appearing in the assumption \eqref{FGR}, where $\omega_*\in (0,4)$.
First, recall
\begin{align*}
(A-\im \varepsilon)^{-1}=\frac{A^2}{A^2+\varepsilon^2}\frac{1}{A}+\im \frac{\varepsilon}{A^2+\varepsilon^2}.
\end{align*}
Therefore,
\begin{align*}
\Im ((H-\omega_*-\im \varepsilon)^{-1}G, G)=\varepsilon\int_{\T}\frac{1}{(2-2\cos \xi-\omega_*)^2+\varepsilon^2}|\hat G(\xi)|^2\,d\xi=\frac{\varepsilon}{4}\int_{\T}\frac{|\hat G(\xi)|^2d\xi}{(\tilde \omega-\cos \xi)^2+\varepsilon^2},
\end{align*}
where $\tilde \omega=\frac{1}{2}(2-\omega_*)\in (-1,1)$.
Further, 
\begin{align*}
\frac{\varepsilon}{4}\int_{\T}\frac{d\xi}{(\tilde \omega-\cos \xi)^2+\varepsilon^2}=-\frac{\im}{8}\int_\T\(\frac{1}{(\tilde \omega-\cos \xi-\im \varepsilon)}-\frac{1}{(\tilde \omega-\cos \xi+\im \varepsilon)}\)\,d\xi
\end{align*}
Now, set $\xi_{\varepsilon,\pm}=\cos^{-1}(\tilde \omega\mp \im \varepsilon)$.
We have $\Re \xi_{\varepsilon,+}=-\Re \xi_{\varepsilon,-}\in (0,\pi)$ and $\Im \xi_{\varepsilon,+}=\Im \xi_{\varepsilon,-}$.
$\xi_{\varepsilon,\pm}\to \pm \arccos \tilde \omega$ ($\arccos$ is the inverse of $\cos|_{[0,\pi]}$).
Thus, since the residue of $(\tilde \omega-\cos \xi \mp \im \varepsilon)^{-1}$ at $\xi=\xi_{\varepsilon,\pm}$ is $\frac{1}{\sin \xi_{\varepsilon,\pm}}$
\begin{align*}
\frac{\varepsilon}{4}\int_{\T}\frac{d\xi}{(\tilde \omega-\cos \xi)^2+\varepsilon^2}=\frac{\pi}{4}\mathrm{Res}_{\xi=\xi_{\varepsilon,\pm}}(\tilde \omega -\cos \xi\pm \im \varepsilon)^{-1}=\frac{\pi}{4}\(\frac{1}{\sin \xi_{\varepsilon,+}}-\frac{1}{\sin \xi_{\varepsilon,-}}\) \to \frac{\pi}{2 \sin \xi_+}.
\end{align*}
Thus, we have
\begin{align*}
\Im ((H-\omega_*-\im \varepsilon)^{-1}G, G)\to \frac{\pi}{4\sin (\arccos \tilde \omega)} \sum_{\pm}|\hat G(\pm \arccos \tilde \omega)|^2.
\end{align*}
As a conclusion, we have
\begin{align*}
\Im (R_H^+(\omega_*)G,G)=\frac{\pi}{4\sin (\arccos (\frac{1}{2}(2-\omega_*)))} \sum_{\pm}|\hat G(\pm \arccos (\frac{1}{2}(2-\omega_*)))|^2.
\end{align*}

\subsection*{Acknowledgments}   
The author was supported by the JSPS KAKENHI Grant numbers JP15K17568, JP17H02851 and JP17H02853.
The author thank valuable suggestions from Scipio Cuccagna and Kenji Nakanishi.
Further, he is grateful for helpful comments given by the anonymous referees to improve the presentation of the paper.

Department of Mathematics and Informatics,
Faculty of Science,
Chiba University,
Chiba 263-8522, Japan

{\it E-mail Address}: {\tt maeda@math.s.chiba-u.ac.jp}

\end{document}